\newcommand*\circled[1]{\tikz[baseline=(char.base)]{
            \node[shape=circle,draw,inner sep=1pt] (char) {#1};}}
\newcommand{\red}{\textcolor{red}}
\newtheorem{thm}{Theorem}[section]
\newtheorem{cor}[thm]{Corollary}
\newtheorem{lem}[thm]{Lemma}
\newtheorem{prop}[thm]{Proposition}
\newtheorem*{claim}{Claim}
\newtheorem{case}{Case}
\theoremstyle{definition}
\newtheorem{defn}[thm]{Definition}
\newtheorem*{conditions}{Working Conditions}
\newtheorem{notation}[thm]{Notation}
\newtheorem{ques}{Question}
\theoremstyle{remark}
\newtheorem{rem}[thm]{Remark}
\numberwithin{equation}{section}
\newcommand{\norm}[1]{\lVert #1 \rVert^2}
\newcommand{\spin}{\ifmmode{\rm Spin}\else{${\rm spin}$\ }\fi}
\newcommand{\spinc}{\ifmmode{{\rm Spin}^c}\else{${\rm spin}^c$}\fi}
\newcommand{\Z}{\mathbb{Z}}
\newcommand{\Q}{\mathbb{Q}}
\DeclareMathOperator{\tors}{tors}
\newcommand{\Lsum}{\#_i L(p_i,q_i)}
\newcommand{\Xsum}{\natural_i X(p_i,q_i)}
\newcommand{\plumb}{\entrymodifiers={+[o][F-]} \xymatrix@C=8pt}
\newcommand{\el}{\ar@{-}[r]}
\newcommand{\ed}{\ar@{..}[r] }
\DeclareMathOperator{\supp}{Supp}
\DeclareMathOperator{\aut}{Aut}
\begin{document}

\title{Definite fillings of lens spaces}%

\author{Paolo Aceto}%
\address {Université de Lille}
\email{paoloaceto@gmail.com }

\author{Duncan McCoy}%
\address {Universit\'{e} du Qu\'{e}bec \`{a} Montr\'{e}al}
\email{mc\_coy.duncan@uqam.ca}

\author{JungHwan Park}%
\address {Korea Advanced Institute of Science and Technology}
\email{jungpark0817@kaist.ac.kr}
\date{}%

\begin{abstract}
This paper considers the problem of determining the smallest (as measured by the second Betti number) smooth negative-definite filling of a lens space. The main result is to classify those lens spaces for which the associated negative-definite canonical plumbing is minimal. The classification takes the form of a list of 10 ``forbidden'' subgraphs that cannot appear in the plumbing graph if the corresponding plumbed 4-manifold is minimal. We also show that whenever the plumbing is minimal any other negative-definite filling for the given lens space has the same intersection form up to addition of diagonal summands. Consequences regarding smooth embeddings of lens spaces in 4-manifolds are also discussed.
\end{abstract}
\maketitle

\section{Introduction}
Many problems in low-dimensional topology naturally lead to the question of determining if a given 3-manifold bounds a 4-manifold with certain constraints on its topology. In this article, we consider \emph{definite fillings}, i.e.\ smooth compact 4-manifolds with definite intersection forms. Throughout this article, we will always work in the smooth category unless otherwise specified, and all manifolds are oriented. If a compact 4-manifold $X$ is a filling of a closed 3-manifold $Y$, then we say that the intersection form $Q_X$ \emph{fills} $Y$ or that $Q_X$ is \emph{bounded} by $Y$.

In its most general form the problem that we consider is the following:
$$\text{Given a 3-manifold $Y$, classify all definite intersection forms filling $Y$.}$$ 
In this context, Donaldson's diagonalization theorem~\cite{Donaldson:1987-1} gives a complete answer when $Y\cong S^3$: the only definite intersection forms filling $S^3$ are the diagonal ones. More work in this direction includes~\cite{Froyshov:1995-1, Owens-Strle:2006-1, Owens-Strle:2012-1, Choe-Park:2018-2, Scaduto:2018-1, Golla-Scaduto:2019-1}. 

 A \emph{lens space} $L(p,q)$ is the result of $-p/q$-Dehn surgery on the unknot in $S^3$ for some coprime integers $p$ and $q$ with $p>q>0$. Each lens space $L(p,q)$ is the boundary of a \emph{canonical negative-definite plumbing} $X(p,q)$, which is a smooth negative-definite 4-manifold obtained by plumbing disk-bundles over spheres according to a \emph{linear plumbing graph} of the form 
$$\begin{tikzpicture}[xscale=1.0,yscale=1,baseline={(0,0)}]
    \node at (1-0.1, .4) {$-a_1$};
    \node at (2-0.1, .4) {$-a_2$};
        \node at (3-0.1, .4) {$-a_3$};
    \node at (5-0.1, .4) {$-a_{n}$};
    \node (A1_1) at (1, 0) {$\bullet$};
    \node (A1_2) at (2, 0) {$\bullet$};
    \node (A1_3) at (3, 0) {$\bullet$};
    \node (A1_4) at (4, 0) {$\cdots$};
    \node (A1_5) at (5, 0) {$\bullet$};
    \path (A1_2) edge [-] node [auto] {$\scriptstyle{}$} (A1_3);
    \path (A1_3) edge [-] node [auto] {$\scriptstyle{}$} (A1_4);
        \path (A1_4) edge [-] node [auto] {$\scriptstyle{}$} (A1_5);
    \path (A1_1) edge [-] node [auto] {$\scriptstyle{}$} (A1_2);
  \end{tikzpicture}$$
where the weights $a_i$ satisfy $a_i\geq 2$ and are uniquely determined by the continued fraction expansion
$$\frac{p}{q} = a_1 - \cfrac{1}{a_2 - \cfrac{1}{\ddots - \cfrac{1}{a_n}}}$$
of $p/q$.

A connected sum of lens spaces $\#_i L(p_i,q_i)$ bounds the boundary connected sum of canonical negative-definite plumbings  $\natural_i X(p_i,q_i)$ and the associated plumbing graph is given by the disjoint union of the linear plumbing graphs corresponding to the summands. 

Note that if $Y$ bounds the negative-definite form $Q_X$, then by blowing up $X$ with copies of $\overline{\mathbb{CP}}^2$ we see that $Y$ also bounds $Q_X \oplus \langle -1\rangle^n$.
The main result of this paper is a classification for which (connected sums of) lens spaces the canonical negative-definite filling is $b_2$-minimal or, equivalently, those lens spaces filled by a unique intersection form up to addition of $\langle -1\rangle$ summands.

\begin{thm}\label{thm:main}
If $L=\#_i L(p_i,q_i)$ is a connected sum of lens spaces and $\natural_i X(p_i,q_i)$ is the corresponding boundary connected sum of canonical negative-definite plumbings, then the following are equivalent:
\begin{enumerate}[label=(\roman*), font=\upshape]
\item\label{it:min_filling} every smooth negative-definite filling $X$ of $L$ satisfies
$$b_2(X) \geq b_2(\natural_i X(p_i,q_i));$$
\item\label{it:rigid} every smooth negative-definite filling $X$ of $L$ satisfies 
$$Q_X \cong Q_{\Xsum} \oplus \langle-1\rangle^n$$
for some integer $n\geq 0$;
\item\label{it:combinatorial} the canonical negative-definite linear plumbing graph associated to $L$ does not contain any of the following configurations as an induced subgraph\footnote{Recall that an induced subgraph of a graph $G$ is obtained by taking a subset of the vertices and all edges in $G$ connecting pairs of these vertices.}:
\begin{multicols}{2}
\begin{enumerate}[label=(\alph*),font=\upshape]
\item\label{it:4} $\begin{tikzpicture}[xscale=1.0,yscale=1,baseline={(0,0)}]
    \node at (1-0.1, .4) {$-4$};
    \node (A_1) at (1, 0) {$\bullet$};
  \end{tikzpicture}$
\item\label{it:52} $\begin{tikzpicture}[xscale=1.0,yscale=1,baseline={(0,0)}]
    \node at (1-0.1, .4) {$-5$};
    \node at (2-0.1, .4) {$-2$};
    \node (A_1) at (1, 0) {$\bullet$};
    \node (A_2) at (2, 0) {$\bullet$};
        \path (A_1) edge [-] node [auto] {$\scriptstyle{}$} (A_2);
  \end{tikzpicture}$
\item\label{it:622} $\begin{tikzpicture}[xscale=1.0,yscale=1,baseline={(0,0)}]
    \node at (1-0.1, .4) {$-6$};
    \node at (2-0.1, .4) {$-2$};
    \node at (3-0.1, .4) {$-2$};
    \node (A_1) at (1, 0) {$\bullet$};
    \node (A_2) at (2, 0) {$\bullet$};
    \node (A_3) at (3, 0) {$\bullet$};
        \path (A_1) edge [-] node [auto] {$\scriptstyle{}$} (A_2);
    \path (A_2) edge [-] node [auto] {$\scriptstyle{}$} (A_3);
  \end{tikzpicture}$
\item\label{it:2-2} $\begin{tikzpicture}[xscale=1.0,yscale=1,baseline={(0,0)}]
    \node at (1-0.1, .4) {$-2$};
    \node at (2-0.1, .4) {$-2$};
    \node (A_1) at (1, 0) {$\bullet$};
    \node (A_2) at (2, 0) {$\bullet$};
  \end{tikzpicture}$
\item\label{it:3-22}  $\begin{tikzpicture}[xscale=1.0,yscale=1,baseline={(0,0)}]
    \node at (1-0.1, .4) {$-3$};
    \node at (2-0.1, .4) {$-2$};
    \node at (3-0.1, .4) {$-2$};
    \node (A_1) at (1, 0) {$\bullet$};
    \node (A_2) at (2, 0) {$\bullet$};
    \node (A_3) at (3, 0) {$\bullet$};
    \path (A_2) edge [-] node [auto] {$\scriptstyle{}$} (A_3);
  \end{tikzpicture}$
\item\label{it:33}
$\begin{tikzpicture}[xscale=1.0,yscale=1,baseline={(0,0)}]
    \node at (1-0.1, .4) {$-3$};
    \node at (2-0.1, .4) {$-3$};
    \node (A1_1) at (1, 0) {$\bullet$};
    \node (A1_2) at (2, 0) {$\bullet$};
    \path (A1_1) edge [-] node [auto] {$\scriptstyle{}$} (A1_2);
  \end{tikzpicture}$
\item\label{it:323} $\begin{tikzpicture}[xscale=1.0,yscale=1,baseline={(0,0)}]
    \node at (1-0.1, .4) {$-3$};
    \node at (2-0.1, .4) {$-2$};
    \node at (3-0.1, .4) {$-3$};
    \node (A1_1) at (1, 0) {$\bullet$};
    \node (A1_2) at (2, 0) {$\bullet$};
    \node (A1_3) at (3, 0) {$\bullet$};
    \path (A1_2) edge [-] node [auto] {$\scriptstyle{}$} (A1_3);
    \path (A1_1) edge [-] node [auto] {$\scriptstyle{}$} (A1_2);
  \end{tikzpicture}$
\item\label{it:3223}
$\begin{tikzpicture}[xscale=1.0,yscale=1,baseline={(0,0)}]
    \node at (1-0.1, .4) {$-3$};
    \node at (2-0.1, .4) {$-2$};
    \node at (3-0.1, .4) {$-2$};
    \node at (4-0.1, .4) {$-3$};
    \node (A1_1) at (1, 0) {$\bullet$};
    \node (A1_2) at (2, 0) {$\bullet$};
    \node (A1_3) at (3, 0) {$\bullet$};
    \node (A1_4) at (4, 0) {$\bullet$};
    \path (A1_2) edge [-] node [auto] {$\scriptstyle{}$} (A1_3);
    \path (A1_3) edge [-] node [auto] {$\scriptstyle{}$} (A1_4);
    \path (A1_1) edge [-] node [auto] {$\scriptstyle{}$} (A1_2);
  \end{tikzpicture}$
\item\label{it:3532} 
$\begin{tikzpicture}[xscale=1.0,yscale=1,baseline={(0,0)}]
    \node at (1-0.1, .4) {$-3$};
    \node at (2-0.1, .4) {$-5$};
    \node at (3-0.1, .4) {$-3$};
    \node at (4-0.1, .4) {$-2$};
    \node (A1_1) at (1, 0) {$\bullet$};
    \node (A1_2) at (2, 0) {$\bullet$};
    \node (A1_3) at (3, 0) {$\bullet$};
    \node (A1_4) at (4, 0) {$\bullet$};
    \path (A1_2) edge [-] node [auto] {$\scriptstyle{}$} (A1_3);
    \path (A1_3) edge [-] node [auto] {$\scriptstyle{}$} (A1_4);
    \path (A1_1) edge [-] node [auto] {$\scriptstyle{}$} (A1_2);
  \end{tikzpicture}$
\item\label{it:2235} $\begin{tikzpicture}[xscale=1.0,yscale=1,baseline={(0,0)}]
    \node at (1-0.1, .4) {$-2$};
    \node at (2-0.1, .4) {$-2$};
    \node at (3-0.1, .4) {$-3$};
    \node at (4-0.1, .4) {$-5$};
    \node (A1_1) at (1, 0) {$\bullet$};
    \node (A1_2) at (2, 0) {$\bullet$};
    \node (A1_3) at (3, 0) {$\bullet$};
    \node (A1_4) at (4, 0) {$\bullet$};
    \path (A1_2) edge [-] node [auto] {$\scriptstyle{}$} (A1_3);
    \path (A1_3) edge [-] node [auto] {$\scriptstyle{}$} (A1_4);
    \path (A1_1) edge [-] node [auto] {$\scriptstyle{}$} (A1_2);
  \end{tikzpicture}$;
\end{enumerate}
\end{multicols}
\item\label{it:submanifold} $\natural_i X(p_i,q_i)$ does not contain a smooth embedded submanifold diffeomorphic to $X(4,1)$, $X(9,2)$, $X(16,3)$, or $X(64,23)$.
\end{enumerate}
\end{thm}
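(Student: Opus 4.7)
My plan is to prove the theorem by establishing the cycle of implications \ref{it:rigid} $\Rightarrow$ \ref{it:min_filling} $\Rightarrow$ \ref{it:combinatorial} $\Leftrightarrow$ \ref{it:submanifold} $\Rightarrow$ \ref{it:rigid}. The first implication is immediate, since any filling with $Q_X \cong Q_{\Xsum} \oplus \langle -1\rangle^n$ satisfies $b_2(X) = b_2(\Xsum) + n \geq b_2(\Xsum)$.

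For \ref{it:min_filling} $\Rightarrow$ \ref{it:combinatorial} I would argue the contrapositive: if the linear plumbing graph contains one of the ten subgraphs, then $L$ admits a strictly smaller negative-definite filling. The four subgraphs (a), (b), (c), (i) are precisely the canonical plumbing graphs of $X(4,1)$, $X(9,2)$, $X(16,3)$, $X(64,23)$, whose boundaries $L(n^2,nq-1)$ bound rational homology balls by Lisca's classification; each such subgraph picks out a sublens-space summand (via a separating sphere produced from the subgraph) which can be replaced by the rational ball to reduce $b_2$. The remaining subgraphs should be handled by the same principle: for instance (d) exploits $L(2,1) = -L(2,1)$ so that two disjoint $-2$ vertices give an $L(2,1)\#L(2,1)$ summand bounding a rational ball, while the longer subgraphs (e)--(h), (j) should correspond to one of $X(4,1), X(9,2), X(16,3), X(64,23)$ appearing after an internal absorption of chain vertices.

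The equivalence \ref{it:combinatorial} $\Leftrightarrow$ \ref{it:submanifold} is a direct combinatorial check matching each of the ten subgraphs to an embedded copy of one of the four distinguished plumbings. Subgraphs (a), (b), (c), (i) yield embeddings immediately as subgraph neighborhoods. For the remaining subgraphs I would realize the corresponding $X(p,q)$ as a regular neighborhood of a union of $2$-spheres built from the plumbing vertices together with, where needed, sphere classes not visible as vertices of the plumbing graph (such as proper transforms under internal blowdowns, or diagonal spheres spanning two chain components); conversely, I would check that any embedded copy of one of the four $X(p,q)$ forces one of the ten combinatorial patterns to appear.

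The heart of the argument is \ref{it:submanifold} $\Rightarrow$ \ref{it:rigid}. Given a negative-definite filling $X$ of $L$, I form the closed negative-definite manifold $Z = X \cup_L (-\Xsum)$ and apply Donaldson's theorem to obtain an isometric lattice embedding $Q_X \oplus (-Q_{\Xsum}) \hookrightarrow \langle -1\rangle^N$. The main task is then to classify all embeddings of $-Q_{\Xsum}$ (equivalently, the linear plumbing lattice) into the standard positive-definite lattice, and to show that under hypothesis \ref{it:submanifold} the orthogonal complement is forced to be $Q_X$ up to $\langle -1\rangle$ summands. The main obstacle will be this lattice combinatorics: any non-standard embedding of the plumbing lattice should be shown to produce precisely one of the sphere configurations realizing $X(4,1)$, $X(9,2)$, $X(16,3)$, or $X(64,23)$ as an embedded submanifold in $\Xsum$, so excluding the four submanifolds rigidly constrains the embedding. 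I expect the argument to proceed by induction on the length of the plumbing chain, combined with a detailed case analysis of the possible expressions of a vector of square $-a_i$ in the standard basis, with the ten-subgraph list serving as the exact catalogue of non-rigid configurations that must be eliminated.
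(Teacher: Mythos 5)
Your overall cycle of implications is reasonable, and the easy parts ((ii)$\Rightarrow$(i), and the idea that the ten configurations let one cut out a plumbed piece bounding a lens space that bounds a rational homology ball) are in the spirit of the paper, modulo misstatements: an induced subgraph does \emph{not} pick out a ``sublens-space summand'' of $L$ (e.g.\ the graph of $L(8,5)$ is $(-2,-3,-2)$, which contains configuration (d) although $L(8,5)$ is prime), and there is no separating sphere involved. What the subgraph gives is an embedded plumbed submanifold of $\Xsum$ whose \emph{own} boundary is a lens space such as $L(4,1)$, $L(9,2)$, $L(16,3)$ or $L(64,23)$; for configurations (d)--(h), (j) one first tubes/resolves the spherical generators to produce a single sphere of square $-4$ or $-6$, and then cuts along that boundary and glues in a rational ball. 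This is exactly how the paper handles (i)$\Rightarrow$(iv)$\Rightarrow$(iii), and your version is repairable along these lines.

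The genuine gap is in your main step. The closed manifold $Z=X\cup_L(-\Xsum)$ is \emph{not} negative-definite: since $\Xsum$ is negative-definite, $-\Xsum$ is positive-definite, so $Z$ is indefinite and Donaldson's theorem does not apply to it. The correct construction glues $X$ to the canonical \emph{negative}-definite plumbing of $-L\cong \#_i L(p_i,p_i-q_i)$, namely $\natural_i X(p_i,p_i-q_i)$, whose graph is obtained from the original one by Riemenschneider point-rule duality. Consequently the lattice whose embeddings into the diagonal lattice must be classified is the \emph{dual} linear lattice $Q_{\natural_i X(p_i,p_i-q_i)}$, not $-Q_{\Xsum}$ as you propose; condition (iii) has to be translated into combinatorial constraints on the dual graph before the rigidity analysis can even be set up (this is the paper's ``Working Conditions'' step), and then one proves by induction on chains of $(-2)$-vertices (with one computer-assisted lemma) that every embedding of the dual lattice is standard, so that its orthogonal complement --- which is $Q_X$ by a Mayer--Vietoris argument using simple connectivity of the plumbing --- is forced. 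Finally, your plan to prove (iv)$\Rightarrow$(ii) directly by converting a non-standard lattice embedding into a smoothly embedded copy of $X(4,1)$, $X(9,2)$, $X(16,3)$ or $X(64,23)$ inside $\Xsum$ is a step that would fail: lattice-level data does not produce smooth submanifolds, and the paper deliberately avoids this by proving (iii)$\Rightarrow$(ii) and linking (iv) to the rest only through the easy geometric implications.
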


Given a connected sum of lens spaces $L=\#_i L(p_i,q_i)$, one can ask about the minimal possible $b_2$ of a negative-definite filling of $L$. Such a $b_2$-minimal negative-definite filling naturally satisfies
$$0\leq b_2(X) \leq b_2(\natural_i X(p_i,q_i)).$$
Those $L$ which admit a filling with $b_2(X)=0$ were classified by Lisca \cite{Lisca:2007-1, Lisca:2007-2}. Theorem~\ref{thm:main} provides a classification of those $L$ at the other extreme, i.e.\ those for which the 4-manifold $\natural_i X(p_i,q_i)$ is a $b_2$-minimal negative-definite filling. Of course, for a general lens space establishing the size of the $b_2$-minimal negative-definite filling is highly nontrivial.

\begin{rem}
One should also note that Theorem~\ref{thm:main} really does require that the configurations in condition \ref{it:combinatorial} arise as {\em induced} subgraphs. For example the plumbing
\[\begin{tikzpicture}[xscale=1.0,yscale=1,baseline={(0,0)}]
    \node at (1-0.1, .4) {$-2$};
    \node at (2-0.1, .4) {$-2$};
    \node at (3-0.1, .4) {$-7$};
    \node (A1_1) at (1, 0) {$\bullet$};
    \node (A1_2) at (2, 0) {$\bullet$};
    \node (A1_3) at (3, 0) {$\bullet$};
    \path (A1_2) edge [-] node [auto] {$\scriptstyle{}$} (A1_3);
    \path (A1_1) edge [-] node [auto] {$\scriptstyle{}$} (A1_2);
  \end{tikzpicture}\]
  is a $b_2$-minimal negative-definite filling. In particular, the two vertices of weight $-2$ are adjacent and so do not induce a subgraph of type \ref{it:2-2}. On the other hand, the plumbing 
  \[\begin{tikzpicture}[xscale=1.0,yscale=1,baseline={(0,0)}]
    \node at (1-0.1, .4) {$-2$};
    \node at (2-0.1, .4) {$-7$};
    \node at (3-0.1, .4) {$-2$};
    \node (A1_1) at (1, 0) {$\bullet$};
    \node (A1_2) at (2, 0) {$\bullet$};
    \node (A1_3) at (3, 0) {$\bullet$};
    \path (A1_2) edge [-] node [auto] {$\scriptstyle{}$} (A1_3);
    \path (A1_1) edge [-] node [auto] {$\scriptstyle{}$} (A1_2);
     \end{tikzpicture}\]
    is not a $b_2$-minimal negative-definite filling because it contains an induced subgraph of type \ref{it:2-2}.
\end{rem}

\begin{rem}
If a smooth 4-manifold $X$ contains a smooth embedded submanifold of the form $X(p^2,pq-1)$ for $p,q$ relatively prime, then one can perform a \emph{generalized rational blow-down} to reduce $b_2(X)$ \cite{Park97blowdown}. Since the manifolds $X(4,1)$, $X(9,2)$, $X(16,3)$, and $X(64,23)$ appearing in Theorem~\ref{thm:main}\ref{it:submanifold} are all of this form, Theorem~\ref{thm:main} implies that if $\natural_i X(p_i,q_i)$ is not a $b_2$-minimal negative-definite filling, then it admits a generalized rational blow-down.
Thus Theorem~\ref{thm:main} shows that $\natural_i X(p_i,q_i)$ is a $b_2$-minimal negative-definite filling if and only if it does not admit any generalized rational blow-down.
\end{rem}

\begin{rem} The list provided by Theorem~\ref{thm:main}\ref{it:combinatorial} is the unique minimal list of forbidden configurations guaranteeing that canonical negative-definite plumbing is a $b_2$-minimal negative-definite filling. Firstly none of the configurations in the list are redundant, since no configuration on the list is contained in another one on the list. Furthermore all the graphs in Theorem~\ref{thm:main}\ref{it:combinatorial} must be included on any such list of forbidden configurations, since all their proper subplumbings are themselves $b_2$-minimal negative-definite fillings.\end{rem}

\subsection{Embeddings in 4-manifolds}
As any 3-manifold smoothly embeds in $S^5$~\cite{Hirsch:1961-1,Rohlin:1965-1,Wall:1965-1} and, hence, in any 5-manifold, it is natural to investigate the existence of embeddings of 3-manifolds in 4-manifolds. We will restrict ourselves to separating embeddings.\footnote{For many commonly considered 4-manifolds, specifically those with $b_1(X)=0$, there is no loss of generality in considering only separating embeddings. If a $4$-manifold $X$ contains an embedded closed 3-manifold $Y$ that is not separating, then one can glue together infinitely many copies of $X$ cut along $Y$ to construct an infinite cover of $X$ with deck group $\Z$. Such a covering corresponds to a nontrivial homomorphism 
$\pi_1(X)\rightarrow \Z$. As this homomorphism necessarily factors through $H_1(X;\Z)$, we have $b_1(X)\geq 1$.} As a particularly simple class of 3-manifolds, the embeddings of lens spaces (and connected sums thereof) have often been studied. For example, it is known that no lens space embeds in $S^4$~\cite{Hantzsche:1938-1} and the connected sums of lens spaces smoothly embedding in $S^4$ have been classified by Donald~\cite{Donald:2015-1} (see also \cite{Kawauchi-Kojima:1980-1,Gilmer-Livingston:1983-1,Fintushel-Stern:1987-1}). Embeddings of lens spaces in $\mathbb{C P}^2$ have also been studied (see, for example, \cite{Owens}).

\begin{ques}\label{q:embedding}
Is there a smooth closed 4-manifold $X$ such that every lens space smoothly embeds in $X$ as a separating submanifold?
\end{ques}

This question is somewhat subtle in several ways. The smooth hypothesis is important; Edmonds has shown that every lens space embeds topologically locally flatly into $\#_4 S^2\times S^2, \#_8 \mathbb{CP}^2$, $\#_5 \mathbb{CP}^2 \# \overline{\mathbb{CP}}^2$, and $\#_2 \mathbb{CP}^2 \#_2 \overline{\mathbb{CP}}^2$ \cite[Theorem 1.2]{Edmonds:2005-1}.\footnote{Although not explicitly stated by Edmonds, the case of $\#_5 \mathbb{CP}^2 \# \overline{\mathbb{CP}}^2$ also follows from the same methods.} Moreover, every punctured lens space smoothly embeds in $S^2\times S^2$ \cite[Proposition 2.3]{Edmonds-Livingston:1996-1}.

As evidence for a negative answer to Question~\ref{q:embedding}, we have the following result.
\begin{thm}\label{thm:embedding}
If $X$ is a smooth closed 4-manifold such that every lens space smoothly embeds in $X$ as a separating manifold, then $X$ is not spin and $$|\sigma(X)|\leq b_2(X)-4.$$ 
In particular,  for any positive integer $n$, not all lens spaces smoothly embed in $\#_n\mathbb{CP}^2$ or $\#_n\mathbb{CP}^2\#\overline{\mathbb{CP}}^2$.
\end{thm}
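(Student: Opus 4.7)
The plan is to leverage Theorem~\ref{thm:main} to extract intersection-form obstructions from separating embeddings of lens spaces. For any separating embedding $L\hookrightarrow X$ we get a decomposition $X = X_+ \cup_L X_-$ with $\partial X_+ = L$ and $\partial X_- = -L$. Since $L$ is a rational homology sphere, a Mayer--Vietoris argument gives $b_2(X)=b_2(X_+)+b_2(X_-)$, $\sigma(X)=\sigma(X_+)+\sigma(X_-)$, and the subgroups $H_2(X_\pm)/\mathrm{tors}$ sit orthogonally in $H_2(X)/\mathrm{tors}$ and span a subgroup of finite index dividing $|H_1(L)|$, carrying the intersection forms $Q_{X_\pm}$.

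To prove $|\sigma(X)|\leq b_2(X)-4$, I would argue the contrapositive. If $b_2(X)-|\sigma(X)|\leq 2$ then $X$ is either definite or has $\min(b_+(X),b_-(X))=1$. In each case, for any separating embedding at least one of $X_\pm$ is forced to be definite. The strategy is to pick lens spaces for which \emph{both} $L$ and $-L$ have minimal canonical negative-definite plumbings with non-diagonalizable forms---for example $L=L(3,1)$ with $-L=L(3,2)$ (plumbings $[-3]$ and $[-2,-2]$), or $L=L(5,2)$ with $-L=L(5,3)$ (plumbings $[-3,-2]$ and $[-2,-3]$). Theorem~\ref{thm:main}\ref{it:rigid} then pins the definite side's intersection form down to a stabilization of the canonical plumbing, and combining sublattice-determinant constraints (the index of $H_2(X_+)\oplus H_2(X_-)$ in $H_2(X)/\mathrm{tors}$ divides $|H_1(L)|$) with Ozsv\'ath--Szab\'o $d$-invariant inequalities for the remaining (possibly indefinite) side rules out both $b_2(X)-|\sigma(X)|=0$ and $b_2(X)-|\sigma(X)|=2$. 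For the non-spin claim, if $X$ were spin then $w_2(X_\pm)=0$ and both $Q_{X_\pm}$ would be even; picking $L$ so that any negative-definite filling is forced by Theorem~\ref{thm:main} to have an odd form (e.g.\ $L(3,1)$ with canonical form $\langle -3\rangle$) gives a contradiction whenever a definite side arises, and a Rokhlin-type comparison of induced spin structures across the decomposition handles the indefinite case.

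The ``in particular'' conclusion is then immediate: $\#_n\mathbb{CP}^2$ is non-spin with $|\sigma|=b_2=n$ (so $b_2-|\sigma|=0<4$), and $\#_n\mathbb{CP}^2\#\overline{\mathbb{CP}}^2$ is non-spin with $\sigma=n-1$, $b_2=n+1$ (so $b_2-|\sigma|=2<4$), each failing the main bound. The most delicate point will be the case analysis when neither $X_+$ nor $X_-$ is purely definite but one is nearly so: here one must combine several lens-space embeddings with sharp $d$-invariant bounds for indefinite fillings in order to close the argument.
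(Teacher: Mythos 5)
Your reduction is fine as far as it goes: if $b_2(X)-|\sigma(X)|\leq 2$ then, after possibly reversing orientation, $b_2^+(X)\leq 1$, so in any separating decomposition $X=X_+\cup_L X_-$ at least one piece is negative-definite; this is exactly the paper's Proposition~\ref{prop:indef_embedding}. But the way you try to extract a contradiction from that definite piece has a genuine gap. Knowing (via Theorem~\ref{thm:main}\ref{it:rigid}) that the definite side has intersection form $Q_{X(p,q)}\oplus\langle-1\rangle^n$ is not a contradiction of anything: for a \emph{fixed} lens space such as $L(3,1)$ or $L(5,2)$ there is no obstruction at all, because every lens space $L(p,q)$ embeds as a separating submanifold of the closed \emph{negative-definite} manifold $X(p,q)\cup_{L(p,q)}X(p,p-q)$. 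So no amount of rigidity, determinant/index bookkeeping, or $d$-invariant inequalities applied to your small examples can rule out $b_2(X)-|\sigma(X)|\leq 2$; those lens spaces simply do embed in definite $4$-manifolds. What is actually needed --- and what the paper spends Section~\ref{sec:no_small} constructing --- is a family $L_k$ of lens spaces such that \emph{every} negative-definite filling of $L_k$ \emph{and} of $-L_k$ has $b_2\geq k$ (Theorem~\ref{thm:no_small_fillings}, proved via the lattices $\left[[9]^n,[3,2,2,2,2]^m\right]$ and the rigidity machinery). Only then does the forced-definite side give $b_2(X)\geq k$ for all $k$, which is the contradiction. This unboundedness is the missing idea in your proposal; the paper explicitly flags it as the delicate point.

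The non-spin part of your sketch has a similar problem. If $X$ is spin but indefinite, neither side of the decomposition need be definite, so the ``odd form'' argument does not apply, and a ``Rokhlin-type comparison'' cannot close the indefinite case: capping the spin piece bounding $-L(n,n-1)$ with the spin plumbing $X(n,n-1)$ gives a closed spin manifold whose signature lies in an interval of length $2b_2(X)$ around $-(n-1)$, and as soon as $2b_2(X)\geq 16$ this interval always contains a multiple of $16$, so Rokhlin gives no contradiction for large $b_2(X)$. The paper instead applies Furuta's $10/8$ theorem to this capped-off manifold (Proposition~\ref{prop:spin}), which yields the quantitative bound $n\leq 9b_2(X)+1$ and hence a contradiction from the family $L(n,n-1)$, $n$ odd and large. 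Your ``in particular'' deduction for $\#_n\mathbb{CP}^2$ and $\#_n\mathbb{CP}^2\#\overline{\mathbb{CP}}^2$ is correct once the bound $|\sigma(X)|\leq b_2(X)-4$ is in hand (note the paper gets the $-4$ from the strict inequality $|\sigma(X)|<b_2(X)-2$ plus the parity $b_2\equiv\sigma\bmod 2$), but as written your argument does not establish that bound.
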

The condition the $X$ is not spin is straightforward. It was shown in~\cite{Aceto-Golla-Larson:2017-1} using the 10/8-Theorem~\cite{Furuta:2001-1}  that there is no positive integer $n$ such that every lens space smoothly embeds in $\#_n S^2\times S^2$. Their argument generalises easily to imply that there is no smooth spin 4-manifold that contains every lens space as a separating smooth submanifold (see Proposition~\ref{prop:spin}). The contribution of this paper is to establish the signature bound. Using Theorem~\ref{thm:main}, one can quickly establish that no smooth definite 4-manifold can contain all lens spaces as smoothly embedded separating submanifolds. Establishing the inequality $|\sigma(X)|\leq b_2(X)-4$ is much more delicate. In order to prove it we exhibit examples of lens spaces which do not admit small negative-definite fillings with either orientation.
\begin{thm}\label{thm:no_small_fillings}
For any positive integer $k$, there is a lens space $L_k$ such that if $X$ is a negative-definite filling of $L_k$ or $-L_k$, then $b_2(X)\geq k$.
\end{thm}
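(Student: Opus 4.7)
The plan is to apply Theorem~\ref{thm:main} to bound negative-definite fillings of $L_k$, and to supplement this with a Heegaard Floer correction-term argument for $-L_k$.

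For each $k$, I would choose $L_k = L(p_k, q_k)$ whose Hirzebruch--Jung continued fraction $p_k/q_k = [a_1, \ldots, a_n]$ has length $n \geq k$ and contains none of the ten forbidden induced subgraphs of Theorem~\ref{thm:main}\ref{it:combinatorial}. Minimal chains of arbitrary length are easy to produce (e.g.\ chains with all $a_i \geq 7$, or alternating chains of the form $[3, a_2, 3, a_4, \ldots]$ with $a_{2j}$ large). Theorem~\ref{thm:main} then gives $Q_X \cong Q_{X(p_k,q_k)} \oplus \langle -1\rangle^{\ell}$ for every negative-definite filling $X$ of $L_k$, hence $b_2(X) \geq n \geq k$.

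For the opposite orientation $-L_k = L(p_k, p_k - q_k)$, Theorem~\ref{thm:main} typically does not apply: for $n \geq 3$ the Riemenschneider dual of a minimal chain $[a_1, \ldots, a_n]$ essentially always contains a forbidden subgraph. Indeed, an interior weight $-3$ in the original produces an adjacent $-3, -3$ pair (configuration~\ref{it:33}) in the dual; an interior $-5$ produces $-3, -2, -2, -3$ (configuration~\ref{it:3223}); an interior $-2$ produces a $-4$ (configuration~\ref{it:4}); an interior $-6$ or larger produces a block of non-adjacent $-2$'s (configuration~\ref{it:2-2}). Hence one cannot ask that both the chain of $L_k$ and the chain of $-L_k$ be simultaneously minimal.

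To handle $-L_k$ I would therefore invoke the Ozsv\'ath--Szab\'o correction-term bound. Since $X(p_k,q_k)$ is a sharp negative-definite plumbing, the $d$-invariants of $L_k$ are computable from the chain $[a_1,\ldots,a_n]$, and $d(-L_k, \mathfrak{t}) = -d(L_k, \mathfrak{t})$. For any negative-definite filling $X'$ of $-L_k$ and any $\mathrm{Spin}^c$ structure $\mathfrak{s}$ on $X'$,
\[
c_1(\mathfrak{s})^2 + b_2(X') \;\leq\; 4\,d\bigl(-L_k,\mathfrak{s}|_{-L_k}\bigr) = -4\,d\bigl(L_k,\mathfrak{s}|_{-L_k}\bigr).
\]
Combined with Elkies' bound on characteristic vectors in negative-definite lattices, this forces $b_2(X') \geq k$ provided the $d$-invariants of $L_k$ are driven sufficiently negative in enough $\mathrm{Spin}^c$ classes. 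The main obstacle is to choose the $a_i$ so that the chain of $L_k$ remains minimal while the minimal $d$-invariant of $L_k$ is at most $-k/4$; this can be achieved by an explicit parametric family of chains together with an arithmetic analysis of characteristic vectors in the lattice of $X(p_k,q_k)$.
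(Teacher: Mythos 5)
Your first half is fine: a chain with all $a_i\geq 7$ (or your alternating chains) contains none of the ten configurations, so Theorem~\ref{thm:main}\ref{it:rigid} gives $b_2(X)\geq n\geq k$ for every negative-definite filling of $L_k$. The genuine gap is the $-L_k$ half. First, the correction-term inequality is being used with the sign backwards: for a negative-definite filling $X'$ of $-L_k$ the constraint $c_1(\mathfrak{s})^2+b_2(X')\leq 4d(-L_k,\mathfrak{s}|)$ only has teeth when $d(-L_k,\mathfrak{t})$ is \emph{negative}, i.e.\ when $d(L_k,\mathfrak{t})$ is \emph{positive}; driving the minimal $d$-invariant of $L_k$ down to $-k/4$ makes the right-hand side large and positive for that class, so it constrains nothing. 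Second, and more seriously, even with the signs fixed the step ``combined with Elkies' bound \dots this forces $b_2(X')\geq k$'' is not available: Elkies' theorem concerns \emph{unimodular} lattices, whereas $Q_{X'}$ has discriminant of order $p_k$, which grows with $k$. What you would actually need is a statement that every negative-definite lattice of rank $<k$ with the prescribed discriminant admits a short characteristic covector lying in a coset corresponding to a $\mathrm{Spin}^c$ structure that both extends over the (unknown) filling and has sufficiently negative $d$ — and you have no control over which boundary $\mathrm{Spin}^c$ structures extend or which coset a short covector restricts to. This ``short characteristic covectors in lattices of given discriminant'' problem is known to be hard (it is the subject of Owens--Strle's work, with results only for very small discriminant), and here the discriminant is unbounded. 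Note also that $-L_k$ bounds the sharp plumbing $X(p,p-q)$, so the $d$-invariants of $-L_k$ are exactly the maxima of $c^2+n$ over the canonical lattice; since $c^2+n$ is unchanged by adding $\langle-1\rangle$ summands, ``extreme $d$-invariants'' by themselves cannot bound $b_2$ from below — one must genuinely rule out all low-rank lattices compatible with this data, which is the entire difficulty and is left as an unproven arithmetic claim in your sketch.

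For comparison, the paper resolves the tension you correctly identified (both orientations cannot have minimal plumbings simultaneously) by not requiring full minimality on \emph{either} side. It takes $L_{m,n}$ with $p/q=[[9]^n,[3,2,2,2,2]^m]^-$ and, after gluing an arbitrary filling to the canonical plumbing of the other orientation and applying Donaldson, uses rigidity of a large \emph{subchain} (via the Working Conditions and Theorem~\ref{maintechnical:thm}) to count how many unit vectors any embedding must consume: this yields $b_2(X)\geq m-n+1$ for fillings of $-L_{m,n}$ (Proposition~\ref{prop:first_bound}) and $b_2(X)\geq n-1$ for fillings of $L_{m,n}$ (Proposition~\ref{prop:second_bound}, where one also needs that vectors orthogonal to the rigidly embedded subchain have norm at least $8$), and then balances the two bounds with $m=2k$, $n=k+1$. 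So the working mechanism is lattice-embedding/rigidity on both sides, not a Floer-theoretic bound; if you want to keep your minimal-chain idea for $L_k$, you would still need an argument of this partial-rigidity type (or some genuinely new input) for the $-L_k$ side.
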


Given Theorem~\ref{thm:embedding} and the results of Edmonds \cite{Edmonds:2005-1}, it is natural to ask whether every lens space smoothly embeds in $\#_2 \mathbb{CP}^2 \#_2 \overline{\mathbb{CP}}^2$; we know that every lens space  topologically locally flatly embeds, but are unable to rule out the existence of a smooth embedding for any lens space.

\subsection{Comparison with the analytic and the symplectic settings}
There are also some links between Theorem~\ref{thm:main} and the theories of symplectic fillings\footnote{In the ensuing discussion, symplectic filling refers to the notion that is sometimes called a weak symplectic filling.} for lens spaces and smoothings of cyclic quotient singularities. The symplectic fillings for the standard (i.e.\ universally tight) contact structures on lens spaces were classified by Lisca~\cite{Lisca:2008-1}. Recently, Etnyre-Roy and Christian-Li extended this to classify the symplectic fillings of all tight contact structures on lens spaces~\cite{Etnyre-Roy:2021-1, Christian-Li:2020-1} (see also~\cite{Plamenevskaya,Kaloti,Fossati:2020-1}). This classification shows that every symplectic filling of a virtually overtwisted contact structure on $L(p,q)$ is diffeomorphic to a symplectic filling of the standard contact structure on $L(p,q)$.

If $L(p,q)$ is a lens space such that every vertex in the canonical negative-definite plumbing graph has weight $a_i\geq 5$, then the standard, and consequently, any, tight contact structure on $L(p,q)$ admits a unique minimal symplectic filling up to diffeomorphism (these are precisely the examples in Corollary~1.2(b) of \cite{Lisca:2008-1}). Theorem~\ref{thm:main} allows us to generalize this family of lens spaces with a unique minimal symplectic fillings. Moreover, since there is a one-to-one correspondence between the minimal symplectic fillings of lens spaces and the smoothings of cyclic quotient singularities~\cite{NemethiPopescu}, we also obtain the following:

\begin{cor}\label{cor:rigidsymplensspace}
Let $L(p,q)$ be a lens space as in Theorem~\ref{thm:main}\ref{it:combinatorial}. If $\xi$ is a tight contact structure on $L(p,q)$, then any minimal symplectic filling of $(L(p,q),\xi)$ is diffeomorphic to $X(p,q)$. Moreover, the base space of the miniversal deformation of the cyclic quotient singularity corresponding to $L(p,q)$ has a unique component.
\end{cor}
\begin{proof}
Suppose $X$ is a minimal symplectic filling of $(L(p,q),\xi)$. By \cite[Theorem~1.1]{Etnyre-Roy:2021-1}, we may assume that $X$ is a minimal filling for the standard contact structure on $L(p,q)$. Every symplectic filling of a lens space is negative-definite~\cite{Schonenberger, Etnyre:2004-1}, so Theorem~\ref{thm:main}, shows that $Q_{X}\cong Q_{X(p,q)}\oplus\langle-1\rangle^n$ for some $n\geq 0$. However, since $X$ is minimal, Lisca's classification~\cite[Theorem 1.1]{Lisca:2008-1} shows that $X$ is diffeomorphic to $X(p,q)$. The conclusion about the miniversal deformation follows from the one-to-one correspondence given by~\cite{NemethiPopescu}.
\end{proof}

Unfortunately, there are many lens spaces which do not satisfy Theorem~\ref{thm:main}\ref{it:combinatorial} and yet only have one minimal symplectic filling up to diffeomorphism for any tight contact structure (e.g.\ $L(n,n-1)$ with $n\geq 4$ \cite{Lisca:2008-1}). Even though Lisca's work provides a method to construct all symplectic fillings of a given lens space, it seems combinatorially challenging to obtain a description of all the lens spaces that admit a unique minimal symplectic filling up to diffeomorphism. Therefore, we pose this as a question:
\begin{ques}\label{question:rigidsymplensspace}
Is it possible to characterize all lens spaces $L(p,q)$ such that a minimal symplectic filling of $(L(p,q),\xi)$ for any tight contact structure $\xi$ is diffeomorphic to $X(p,q)$?
\end{ques}

\subsection{Proof outline for Theorem~\ref{thm:main}}
Theorem~\ref{thm:main} is proven via the chain of implications 
\[\mathrm{\ref{it:rigid}} \Rightarrow \mathrm{\ref{it:min_filling}} \Rightarrow \mathrm{\ref{it:submanifold}} \Rightarrow \mathrm{\ref{it:combinatorial}} \Rightarrow \mathrm{\ref{it:rigid}}\]
with \ref{it:combinatorial}~$\Rightarrow$~\ref{it:rigid} being the most difficult step.
The implication \ref{it:rigid} $\Rightarrow$ \ref{it:min_filling} holds, since the intersection form $Q_{\Xsum}$ is defined on a group of rank $b_2(\Xsum)$. The implication \ref{it:min_filling} $\Rightarrow$ \ref{it:submanifold} is obtained by noting that $L(4,1)$, $L(9,2)$, $L(16,3)$, and $L(64,23)$ bound rational homology balls, so a copy of $X(4,1)$, $X(9,2)$, $X(16,3)$, or $X(64,23)$ in $\Xsum$ can be cut out and replaced by a rational homology ball to reduce the second Betti number. In order to obtain \ref{it:submanifold} $\Rightarrow$ \ref{it:combinatorial}, we show that if the plumbing graph for $\Xsum$ contains one of the induced subgraphs listed in \ref{it:combinatorial}, then we can use the corresponding spherical generators to construct an embedded copy of  $X(4,1)$, $X(9,2)$, $X(16,3)$, or $X(64,23)$ in $\Xsum$.

To prove \ref{it:combinatorial}~$\Rightarrow$~\ref{it:rigid}, we invoke Donaldson's diagonalization theorem~\cite{Donaldson:1987-1}. Let $L=\#_i L(p_i,q_i)$ be a connected sum of lens spaces and $X$ be a smooth negative-definite filling of $L$ with intersection form $Q_X$.  Since $-L = - \#_i L(p_i,q_i) \cong \#_i L(p_i,p_i- q_i)$, it admits a canonical negative-definite filling $\natural_i X(p_i,p_i-q_i)$. Therefore we can form a smooth closed negative-definite 4-manifold
\[
X':=X \cup_{L} \natural_i X(p_i,p_i-q_i).
\]
By Donaldson's theorem~\cite{Donaldson:1987-1}, its intersection form $Q_{X'}$ is the standard diagonal form. The inclusions $X, \natural_i X(p_i,p_i-q_i)\hookrightarrow  X'$
induce a morphism of integral lattices
$$Q_X \oplus Q_{\natural_i X(p_i,p_i-q_i)}\hookrightarrow \langle-1\rangle^n,$$
where $n= b_2(X')$. Moreover, it can be checked that $Q_X$ is isomorphic to the orthogonal complement of the image of $Q_{\natural_i X(p_i,p_i-q_i)}$ in $\langle-1\rangle^n$. In particular, if $Q_{\natural_i X(p_i,p_i-q_i)}$ admits a unique embedding in $\langle-1\rangle^n$ up to automorphisms, then the intersection form $Q_X$ is uniquely determined. Therefore, it is enough to show that such an embedding is unique whenever $L$ satisfies \ref{it:combinatorial}. Most of the article is devoted to proving this last step. 

\subsection*{Structure of the article} In Section~\ref{sec:topology}, we prove the easy steps of Theorem~\ref{thm:main} (i.e.\ \ref{it:rigid} $\Rightarrow$ \ref{it:min_filling} $\Rightarrow$ \ref{it:submanifold} $\Rightarrow$ \ref{it:combinatorial}) and reduce \ref{it:combinatorial}~$\Rightarrow$~\ref{it:rigid} to a lattice theoretical statement (see Theorem~\ref{thm:technical}). We then deduce Theorem~\ref{thm:main} assuming Theorem~\ref{thm:technical}. In Section~\ref{Section:preliminaries} and Section~\ref{sec:lattice_analysis}, we prove Theorem~\ref{thm:technical} with the bulk of the technical analysis  being contained in Section~\ref{sec:lattice_analysis}. In Section~\ref{sec:no_small}, we construct some examples of lens spaces that do not admit any small definite fillings of either sign. Finally, in Section~\ref{sec:embeddings}, we use these examples to prove Theorem~\ref{thm:embedding}. 

\subsection*{Notation and conventions}
In this article, every 3-manifold is smooth, connected, closed, and oriented. All 4-manifolds are smooth, connected, compact, and oriented. We indicate with $-Y$ the manifold $Y$ with reversed orientation. For two manifolds $Y_1$ and $Y_2$, the symbol $Y_1 \cong Y_2$ is used to indicate that there is an orientation preserving diffeomorphism between $Y_1$ and $Y_2$. Unless explicitly otherwise stated, homology in this article is homology with integral coefficients.

\subsection*{Acknowledgements}
We are grateful to John Etnyre for useful conversations and his interest. We thank also several anonymous referees for their many insightful comments and suggestions.
PA is supported by the European Union’s Horizon 2020 research and innovation programme under the Marie Sk\l odowska-Curie action, Grant No.\ 101030083 LDTSing.
DM is partly supported by NSERC and FRQNT grants. 
JP is partially supported by Samsung Science and Technology Foundation (SSTF-BA2102-02) and the POSCO TJ Park Science Fellowship.

\section{Topology and the proof of Theorem~\ref{thm:main}}\label{sec:topology}
In this section, we discuss the 4-manifold topology necessary for Theorem~\ref{thm:main} and give the proof modulo a key technical result, Theorem~\ref{thm:technical}, whose proof will occupy Section~\ref{Section:preliminaries} and  Section~\ref{sec:lattice_analysis}.

An \emph{integral lattice} is a pair $(L, Q_L)$, where $L$ is a free abelian group and $Q_L : L \times L \rightarrow \Z$ is a symmetric bilinear pairing. A \emph{map of lattices} is a linear map
\[
\phi: (L, Q_L) \rightarrow (L', Q_{L'})
\]
 preserving the bilinear pairings. In particular, the intersection form of a compact oriented 4-manifold $X$ defines a lattice $(H_2(X)/ \tors , Q_X)$, which we will often refer to as just $Q_X$.
 
Recall that any connected sum of lens spaces $L=\Lsum$ bounds the boundary connected sum of canonical plumbings $X=\Xsum$. The intersection form of $X$ will be discussed in further detail in Section~\ref{Section:preliminaries}. However, for the purposes of this section the following well-known lemma contains all the necessary properties that we need for this section. Its proof makes use of a stronger statement (Lemma~\ref{lem:def_calc}) that will be proven later.
 
 \begin{lem}\label{lem:length_one}
If $X=\Xsum$ is the boundary connected sum of canonical plumbings, then for any $x\in H_2(X)$ with $x\neq 0$, we have $Q_X(x,x)\leq -2$. In particular, $X$ is negative-definite and the intersection form does not contain any elements with $Q_X(x,x)=-1$.
\end{lem}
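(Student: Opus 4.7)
The plan is to reduce immediately to the case of a single linear plumbing, and then to prove the bound on $Q(x,x)$ via a telescoping-style rewriting of the quadratic form.

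First I would observe that the intersection form of $\natural_i X(p_i,q_i)$ is the orthogonal direct sum of the intersection forms of the $X(p_i,q_i)$. Hence any nonzero $x$ has a nonzero component $x^{(i)}$ in at least one summand, and $Q_X(x,x)=\sum_i Q_{X(p_i,q_i)}(x^{(i)},x^{(i)})$. So it suffices to prove the bound for a single plumbing $X(p,q)$, whose intersection form is the tridiagonal matrix with $-a_1,\dots,-a_n$ on the diagonal and $1$s on the off-diagonal, where each $a_i\geq 2$.

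The key identity I would use is the following. For $x=(x_1,\dots,x_n)\in\Z^n$, extend by $x_0=x_{n+1}=0$. Expanding $\sum_{i=0}^{n}(x_{i+1}-x_i)^2$ and using these boundary conditions gives
\[
\sum_{i=0}^{n}(x_{i+1}-x_i)^2 \;=\; 2\sum_{i=1}^{n}x_i^2 - 2\sum_{i=1}^{n-1}x_ix_{i+1},
\]
from which
\[
-Q_{X(p,q)}(x,x) \;=\; \sum_{i=1}^{n}a_i x_i^2 - 2\sum_{i=1}^{n-1}x_ix_{i+1} \;=\; \sum_{i=1}^{n}(a_i-2)\, x_i^2 \;+\; \sum_{i=0}^{n}(x_{i+1}-x_i)^2.
\]
Since $a_i\geq 2$, both sums on the right are non-negative integers, giving negative-definiteness.

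The main (and only nontrivial) step is then to rule out $-Q(x,x)\in\{0,1\}$ for $x\ne 0$. If the total equals $0$, every square term vanishes; in particular all consecutive differences $x_{i+1}-x_i$ vanish, so $x_i$ is constant and equal to $x_0=0$. If the total equals $1$, exactly one of the $(x_{i+1}-x_i)^2$ equals $1$ and the others vanish, so there is an index $j$ with $x_0=\cdots=x_j$ and $x_{j+1}=\cdots=x_{n+1}$; but $x_0=x_{n+1}=0$ forces $x_j=x_{j+1}=0$, contradicting $(x_{j+1}-x_j)^2=1$. Hence $-Q(x,x)\geq 2$ whenever $x\neq 0$, which gives the claimed bound $Q_X(x,x)\leq -2$ and rules out $(-1)$-classes. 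I do not expect any real obstacles beyond setting up this identity cleanly; the argument is elementary once the rewriting is in place.
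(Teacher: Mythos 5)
Your proof is correct and follows essentially the same route as the paper's Lemma~\ref{lem:def_calc}: decompose over connected components and complete the square along each linear chain (your phantom boundary terms $x_0=x_{n+1}=0$ reproduce exactly the paper's identity in terms of adjusted weights $w'(v_i)$), then rule out the values $0$ and $1$. The only difference is the endgame---the paper uses integrality plus the triangle inequality to obtain the stronger pointwise bound $Q_P(x,x)\geq w(v_j)|c_j|$, which it reuses later in Section~\ref{sec:no_small}, whereas your small-value case analysis (where you should note in passing that the single term equal to $1$ cannot be one of the $(a_i-2)x_i^2$ terms, since vanishing of all the differences already forces $x=0$) is enough for the statement of the lemma.
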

\begin{proof}
The intersection form of $-X$, that is, $X$ with the opposite orientation, is isomorphic to a lattice associated to a weighted plumbing graph in which every vertex has weight greater than or equal to two. Lemma~\ref{lem:def_calc} applied to such a plumbing graph shows that $Q_{-X}(x,x)=-Q_X(x,x)\geq 2$ for every nonzero $x\in H_2(X)$.
\end{proof}

\subsection{Small fillings} 
Next we perform the necessary constructions to establish the implications \ref{it:min_filling} $\Rightarrow$ \ref{it:submanifold} and  \ref{it:submanifold} $\Rightarrow$ \ref{it:combinatorial} in Theorem~\ref{thm:main}.

\begin{lem}\label{lem:rational_blowdowns}
Let $L=\Lsum$ be a connected sum of lens spaces bounding a smooth negative-definite manifold $X$. If $X$ contains an embedded submanifold diffeomorphic to one of $X(4,1)$, $X(9,2)$, $X(16,3)$, or $X(64,23)$, then $L$ bounds a smooth negative-definite 4-manifold $X'$ with $b_2(X')<b_2(X)$.
\end{lem}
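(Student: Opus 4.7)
The plan is a standard rational blow-down. The key classical input, which distinguishes the four lens spaces in the statement, is that each one has the form $L(p^2, pq-1)$ with $\gcd(p,q) = 1$: explicitly,
\[
L(4,1) = L(2^2, 2\cdot 1 - 1),\quad L(9,2) = L(3^2, 3\cdot 1 - 1),\quad L(16,3) = L(4^2, 4\cdot 1 - 1),\quad L(64,23) = L(8^2, 8\cdot 3 - 1).
\]
It is a classical result of Casson and Harer that every lens space of the form $L(p^2, pq-1)$ with $\gcd(p,q)=1$ bounds a smooth rational homology $4$-ball; I will denote such a ball by $B$.

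Given this, let $N \subset X$ be the embedded codimension-$0$ copy of $X(p,q)$, so that $\partial N$ is one of the four lens spaces above. I would then form
\[
X' := (X \setminus \text{int}\, N) \cup_{\partial N} B,
\]
where the orientation of $B$ is chosen so that $\partial X' = \partial X = L$. The resulting $X'$ is a smooth compact $4$-manifold with $\partial X' = L$, and it remains only to check the two desired properties.

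Both properties follow from $\partial N$ being a rational homology sphere. A Mayer--Vietoris computation yields
\[
b_2(X') \;=\; b_2(X \setminus \text{int}\, N) + b_2(B) \;=\; b_2(X) - b_2(N),
\]
which is strictly less than $b_2(X)$ because each of the four plumbings has $b_2(N) \geq 1$. For negative-definiteness, Novikov additivity applied to $X = (X \setminus \text{int}\, N) \cup N$ gives
\[
\sigma(X \setminus \text{int}\, N) = \sigma(X) - \sigma(N) = -b_2(X) + b_2(N) = -b_2(X \setminus \text{int}\, N),
\]
using that both $X$ and $N$ are negative-definite, so $b_2^+(X \setminus \text{int}\, N) = 0$. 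A second application of Novikov additivity then gives $\sigma(X') = \sigma(X \setminus \text{int}\, N) + \sigma(B) = -b_2(X')$, and hence $X'$ is negative-definite. The main (and essentially only) non-formal ingredient in the entire argument is the Casson--Harer construction of rational homology balls bounding $L(p^2, pq-1)$; with these in hand, the remainder is routine cut-and-paste plus Novikov additivity, and I anticipate no genuine technical obstacle.
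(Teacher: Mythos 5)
Your proof is correct and follows essentially the same route as the paper: excise the embedded plumbing, glue in a rational homology ball bounded by the corresponding lens space (the paper cites Lisca's theorem where you invoke Casson--Harer, but this is the same input), and conclude via Mayer--Vietoris over $\mathbb{Q}$ and Novikov additivity that $b_2$ drops while negative-definiteness is preserved.
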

\begin{proof}
We note that the lens spaces $L(4,1)$, $L(9,2)$, $L(16,3)$, and $L(64,23)$ all bound rational homology balls (see e.g.\ \cite[Theorem 1.2]{Lisca:2007-1}). The lemma follows from the more general observation that if $X$ contains an embedded submanifold $A$ diffeomorphic to $X(r,s)$, where $\partial X(r,s)\cong L(r,s)$ bounds a rational homology ball $B$, then we can form the smooth manifold
\[
X'= \left(X \smallsetminus \mathrm{int} A\right) \cup_{-L(r,s)} B.
\]
Since $L(r,s)$ is a rational homology sphere, the Mayer-Vietoris exact sequence with rational coefficients quickly shows that $b_2(X')=b_2(X)-b_2(X(r,s))$ and Novikov additivity shows that $\sigma(X')=\sigma(X) -\sigma(X(r,s))$. Since both $X$ and $X(r,s)$ are negative-definite, this shows that $X'$ is negative-definite with $b_2(X')<b_2(X)$.
\end{proof}

Let $F_1$ and $F_2$ be two smoothly embedded oriented surfaces intersecting transversely at a single point $p$ in a 4-manifold $X$. If $B$ is sufficiently small 4-ball around $p$, then $F_1$ and $F_2$ both intersect $B$ in a disk and the set $(F_1 \cup F_2)\cap \partial B$ is a Hopf link in $\partial B \cong S^3$. Since the Hopf link bounds an annulus in $B$ we may produce a connected embedded surface $F$ by replacing the two disks $F_1\cap B$ and $F_2\cap B$ by an annulus in $B$. 
This operation, which we refer to as resolving the intersection point between $F_1$ and $F_2$, will be used in the proof of the following lemma. An important observation is that the homology class represented by $F$ is 
\[[F]=[F_1]+[F_2]\in H_2(X;\Z).\]    

\begin{lem}\label{lem:constructing_submanifolds}
Let $L=\Lsum$ be a connected sum of lens spaces. If the canonical plumbing graph associated to $L$ contains one of the induced subgraphs in Theorem~\ref{thm:main}\ref{it:combinatorial}, then $\Xsum$ contains an embedded submanifold diffeomorphic to one of the plumbed 4-manifolds $X(4,1)$, $X(9,2)$, $X(16,3)$, or $X(64,23)$.
\end{lem}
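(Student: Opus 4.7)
The plan is to verify the lemma by case analysis across the ten subgraphs (a)--(j) of Theorem~\ref{thm:main}\ref{it:combinatorial}. In each case we exhibit an explicit codimension-zero submanifold of $\Xsum$ diffeomorphic to one of the four target manifolds, built from the plumbing spheres $E_1, \ldots, E_n \subset \Xsum$ corresponding to the subgraph vertices.

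Four of the subgraphs are immediate: (a), (b), (c), (i) are themselves the canonical plumbing graphs of $X(4,1)$, $X(9,2)$, $X(16,3)$, and $X(64,23)$ respectively, so a regular neighborhood of the corresponding spheres in $\Xsum$ is the required submanifold. The disconnected subgraphs (d) and (e) are handled by tubing. In case (d) the two disjoint $-2$ spheres are joined by a generic arc in the connected manifold $\Xsum$ and tubed along it; since the spheres are disjoint the resulting embedded sphere has self-intersection $-4$, with regular neighborhood $X(4,1)$. In case (e), tubing the isolated $-3$ sphere with one of the $-2$ spheres in the chain (via an arc avoiding the other $-2$ sphere) produces an embedded $-5$ sphere still meeting the remaining $-2$ sphere transversally at one point, whose regular neighborhood is the $[-5,-2]$ plumbing $X(9,2)$.

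The connected chains (f), (g), (h), (j) are handled by handle slides in the canonical plumbing handle description of $\Xsum$. For (f) $[-3,-3]$, (g) $[-3,-2,-3]$, and (h) $[-3,-2,-2,-3]$, a direct computation gives
\[
(E_1 + \cdots + E_n)^2 \;=\; -\sum_{i=1}^{n} a_i + 2(n-1) \;=\; -4,
\]
and iterated slides (sliding the attaching circle $K_n$ successively over $K_{n-1}, \ldots, K_1$) realize this $-4$ class by a smoothly embedded sphere. An induction on the number of slides, using that the only non-zero plumbing linking numbers are between adjacent attaching circles, shows that after $j$ slides the new attaching circle has framing $-(a_{n-j} + \cdots + a_n) + 2j$; after $n-1$ slides this equals $-4$, and a regular neighborhood of the resulting sphere is $X(4,1)$. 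For (j) $[-2,-2,-3,-5]$, a single slide of $K_4$ over $K_3$ produces an embedded sphere representing $E_3 + E_4$ with framing $-6$, disjoint from $K_1$ and meeting $K_2$ transversally once; the resulting $[-2,-2,-6]$ configuration has regular neighborhood diffeomorphic to $X(16,3)$.

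The main obstacle is precisely these handle-slide cases. In contrast to the disjoint-sphere tubing used for (d) and (e), one cannot locally smooth the intersection point of two adjacent plumbing spheres to obtain an embedded sphere, since such a smoothing necessarily raises the genus by one. The handle-slide construction circumvents this globally, producing an embedded sphere that need not lie in any small neighborhood of $E_1 \cup \cdots \cup E_n \subset \Xsum$. The remaining framing and linking-number bookkeeping during the slides is elementary, but it is the essential geometric content of the lemma.
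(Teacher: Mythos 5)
Your treatment of cases \ref{it:4}, \ref{it:52}, \ref{it:622}, \ref{it:3532} (neighbourhoods of the spheres themselves) and of the disconnected cases \ref{it:2-2}, \ref{it:3-22} (tubing disjoint spheres) coincides with the paper's proof. The divergence is in the connected chains \ref{it:33}, \ref{it:323}, \ref{it:3223}, \ref{it:2235}, and there your argument has a genuine problem. The ``main obstacle'' you identify is based on a misconception: smoothing a single transverse intersection point between two \emph{distinct} embedded spheres does not raise the genus. It is an ambient connected sum: locally one replaces the cone on a Hopf link by an annulus, so the Euler characteristic of the result is $2+2-2+0=2$ and the resolved surface is again a sphere. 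Genus increases only when one resolves a self-intersection of a single connected surface (or a second intersection point between the same pair of components), which never happens here since adjacent plumbing spheres meet in exactly one point and the resolution pattern is a tree. After reversing orientations of alternate spheres so that each intersection point is positive, resolving all intersection points of the chain $E_1,\dots,E_n$ yields an embedded sphere in the class $E_1-E_2+E_3-\cdots$ of square $-\sum a_i+2(n-1)=-4$ (and, in case \ref{it:2235}, resolving only the point between the $-3$ and $-5$ spheres yields a $-6$ sphere still meeting the adjacent $-2$ sphere once, giving the $[-6,-2,-2]$ configuration). This is exactly the paper's argument, and it is correct.

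Your proposed replacement via handle slides is, as written, incomplete. The framing arithmetic for the slid handle is fine, but a framing computation does not by itself produce an embedded sphere of that square: the obvious closed surface representing the class of the slid handle is its core capped off by a Seifert surface of the new attaching circle pushed into the $0$-handle, so its genus equals the genus of that attaching circle as a knot in $S^3$. You never argue that the iterated band sums of the chain of Hopf-linked unknots (with framed pushoffs) remain unknotted, nor, in case \ref{it:2235}, that the slid circle meets the relevant $-2$ sphere geometrically (not just algebraically) in a single point and is geometrically disjoint from the other one. For these particular short chains such verifications can be carried out, but they are the actual content of the step and are missing; meanwhile the standard resolution argument you set aside makes them unnecessary.
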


\begin{proof}
The manifolds $X(4,1)$, $X(9,2)$, $X(16,3)$ and $X(64,23)$ are plumbed according to the linear plumbing graphs 
$$\begin{tikzpicture}[xscale=1.0,yscale=1,baseline={(0,0)}]
    \node at (1-.1, .4) {$-4$};
    \node (A_1) at (1, 0) {$\bullet$};
  \end{tikzpicture},   \qquad
  \begin{tikzpicture}[xscale=1.0,yscale=1,baseline={(0,0)}]
    \node at (1-0.1, .4) {$-5$};
    \node at (2-0.1, .4) {$-2$};
    \node (A_1) at (1, 0) {$\bullet$};
    \node (A_2) at (2, 0) {$\bullet$};
        \path (A_1) edge [-] node [auto] {$\scriptstyle{}$} (A_2);
  \end{tikzpicture},  \qquad
  \begin{tikzpicture}[xscale=1.0,yscale=1,baseline={(0,0)}]
    \node at (1-0.1, .4) {$-6$};
    \node at (2-0.1, .4) {$-2$};
    \node at (3-0.1, .4) {$-2$};
    \node (A_1) at (1, 0) {$\bullet$};
    \node (A_2) at (2, 0) {$\bullet$};
    \node (A_3) at (3, 0) {$\bullet$};
        \path (A_1) edge [-] node [auto] {$\scriptstyle{}$} (A_2);
    \path (A_2) edge [-] node [auto] {$\scriptstyle{}$} (A_3);
  \end{tikzpicture},
  $$
  and
  $$
  \begin{tikzpicture}[xscale=1.0,yscale=1,baseline={(0,0)}]
    \node at (1-0.1, .4) {$-3$};
    \node at (2-0.1, .4) {$-5$};
    \node at (3-0.1, .4) {$-3$};
    \node at (4-0.1, .4) {$-2$};
    \node (A1_1) at (1, 0) {$\bullet$};
    \node (A1_2) at (2, 0) {$\bullet$};
    \node (A1_3) at (3, 0) {$\bullet$};
    \node (A1_4) at (4, 0) {$\bullet$};
    \path (A1_2) edge [-] node [auto] {$\scriptstyle{}$} (A1_3);
    \path (A1_3) edge [-] node [auto] {$\scriptstyle{}$} (A1_4);
    \path (A1_1) edge [-] node [auto] {$\scriptstyle{}$} (A1_2);
  \end{tikzpicture},$$
  respectively. Thus the statement of the lemma is clearly true if the canonical plumbing graph contains one of the configurations \ref{it:4}, \ref{it:52}, \ref{it:622}, or \ref{it:3532}.

If the induced subgraph \ref{it:2-2} is contained in the plumbing graph, then we obtain an embedded sphere with self-intersection $-4$ by tubing together the two disjoint spheres with self-intersection $-2$. A tubular neighbourhood of this $-4$-framed sphere is a copy of $X(4,1)$.
  
If the induced subgraph \ref{it:3-22} is contained in the plumbing graph, then we obtain a copy of $X(9,2)$ as a submanifold. Tubing the sphere of self-intersection $-3$ with one of the spheres of self-intersection $-2$ yields a sphere of self-intersection $-5$ that intersects the other sphere of self-intersection $-2$ in a single point. A tubular neighbour of the union of these two spheres of self-intersection $-5$ and $-2$ gives the desired copy of $X(9,2)$.

In cases \ref{it:33}, \ref{it:323}, and \ref{it:3223}, we obtain an embedded copy of $X(4,1)$. In each case, this can be obtained by taking a copy of each of the spherical generators and resolving the intersection points to obtain an embedded sphere of self-intersection $-4$.

Finally, for \ref{it:2235}, we obtain an embedded copy of $X(16,3)$. An embedded sphere with self-intersection $-6$ is obtained by resolving the intersection point between the spheres of self-intersection $-3$ and $-5$. A tubular neighbourhood of this sphere along with tubular neighbourhoods of the two $-2$-framed spheres gives the copy of $X(16,3)$. 
\end{proof}

\subsection{Rigidity and the intersection form}
In this section, we discuss the intersection forms of definite fillings. 

\begin{lem}\label{lem:orthogonal_complement}
Let $Y$ be a closed oriented rational homology sphere which is the boundary of compact 4-manifolds $P$ and $X$, where $H_1(P)=0$. If $W$ is the closed oriented 4-manifold $W= P\cup_Y -X$, then the inclusions  $P, -X \hookrightarrow W$ induce a map of lattices 
\[\iota: Q_P \oplus -Q_X  \rightarrow Q_W\]
such that $-Q_X \cong \iota(Q_P)^\bot\subseteq Q_W$, where $\iota(Q_P)^\bot$ is the orthogonal complement of $\iota(Q_P)$ in $Q_W$. \end{lem}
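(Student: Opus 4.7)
The plan is to construct $\iota$ from a Mayer--Vietoris sequence for $W=P\cup_Y(-X)$ and then identify the orthogonal complement by combining the long exact sequence of the pair $(W,-X)$ with Poincar\'e--Lefschetz duality on $P$.

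First, since $Y$ is a rational homology sphere, $H_1(Y)$ is finite and $H_2(Y)\cong H^1(Y)\cong \mathrm{Hom}(H_1(Y),\mathbb{Z})=0$. The Mayer--Vietoris sequence therefore yields an injection
\[
H_2(P)\oplus H_2(-X)\hookrightarrow H_2(W).
\]
Since this map carries torsion to torsion, it descends to an injection $\iota$ on torsion-free quotients. For $\alpha\in H_2(P)$ and $\beta\in H_2(-X)$, the classes $\iota(\alpha),\iota(\beta)$ are represented by cycles supported in the interiors of $P$ and $-X$, respectively, hence $\iota(\alpha)\cdot_W\iota(\beta)=0$; and the inclusions $P,-X\hookrightarrow W$ tautologically preserve self-pairings. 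Together with $Q_{-X}=-Q_X$, this shows $\iota$ is an isometric embedding $Q_P\oplus(-Q_X)\hookrightarrow Q_W$.

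The remaining task is to show $\iota(-Q_X)=\iota(Q_P)^\bot$. The inclusion $\iota(-Q_X)\subseteq \iota(Q_P)^\bot$ is immediate from the pairing calculation above. For the reverse inclusion, I would invoke the long exact sequence
\[
H_2(-X)\xrightarrow{\iota}H_2(W)\xrightarrow{j_*}H_2(W,-X)\to H_1(-X).
\]
Excision identifies $H_2(W,-X)\cong H_2(P,Y)$, and Poincar\'e--Lefschetz duality together with universal coefficients gives a short exact sequence
\[
0\to \mathrm{Ext}(H_1(P),\mathbb{Z})\to H^2(P)\cong H_2(P,Y)\to \mathrm{Hom}(H_2(P),\mathbb{Z})\to 0.
\]
The hypothesis $H_1(P)=0$ kills the Ext term, so the composition $H_2(W)\xrightarrow{j_*}H_2(W,-X)\cong \mathrm{Hom}(H_2(P),\mathbb{Z})$ becomes an isomorphism onto the target. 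A standard cap-product computation identifies this composition with $\beta\mapsto(\alpha\mapsto\iota(\alpha)\cdot_W\beta)$; hence $\beta\in\iota(Q_P)^\bot$ if and only if $j_*(\beta)=0$, which by exactness happens if and only if $\beta\in\iota(H_2(-X))$. Passing to torsion-free quotients completes the proof.

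The main obstacle is the identification of the composite $H_2(W)\to H_2(P,Y)\cong \mathrm{Hom}(H_2(P),\mathbb{Z})$ with the adjoint of the intersection pairing: it is a standard but fiddly naturality check for Lefschetz duality and the cap product. It is precisely this step that uses $H_1(P)=0$, since an $\mathrm{Ext}(H_1(P),\mathbb{Z})$ contribution to $H^2(P)$ would detect torsion phenomena invisible to the intersection pairing and so could enlarge $\iota(Q_P)^\bot$ beyond $\iota(H_2(-X))$.
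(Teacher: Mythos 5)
Your proof is correct, and its skeleton is the same as the paper's: the Mayer--Vietoris injection $H_2(P)\oplus H_2(-X)\hookrightarrow H_2(W)$ gives the isometric embedding, and the exact sequence of the pair $(W,-X)$ together with excision, Lefschetz duality, universal coefficients, and the hypothesis $H_1(P)=0$ controls the orthogonal complement. The difference is only in the last step. The paper argues softly: since $Y$ is a rational homology sphere the image of $\iota$ has full rank, so $-Q_X$ sits with finite index in $\iota(Q_P)^\bot$, and then torsion-freeness of $H_2(W,-X)\cong H^2(P)$ (using $H_1(P)=0$) makes the quotient $H_2(W)/\mathrm{im}\,H_2(-X)$ torsion-free, forcing the index to be one. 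You instead identify the composite $H_2(W)\to H_2(W,-X)\cong \mathrm{Hom}(H_2(P),\mathbb{Z})$ with the adjoint $\beta\mapsto(\alpha\mapsto\iota(\alpha)\cdot_W\beta)$ of the pairing; this is indeed standard, but it is precisely the naturality check you flag as the main obstacle, and the paper's rank-plus-torsion argument sidesteps it entirely (your route, in exchange, never needs the full-rank count). One wording caution: the composite map out of $H_2(W)$ is certainly not an isomorphism onto $\mathrm{Hom}(H_2(P),\mathbb{Z})$ --- its kernel contains $\mathrm{im}\,H_2(-X)$; what your argument actually requires, and what $H_1(P)=0$ delivers, is only that $H_2(W,-X)\cong H^2(P)\to \mathrm{Hom}(H_2(P),\mathbb{Z})$ is injective, so that $\beta\in\iota(Q_P)^\bot$ forces $j_*(\beta)=0$. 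With that phrase corrected, the proof stands.
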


\begin{proof}
Using a portion of the long exact Mayer-Vietoris sequence, we obtain the exact sequence
\[
0=H_2(Y) \rightarrow H_2(P) \oplus H_2(-X) \rightarrow H_2(W).
\]
Since the maps induced by inclusion preserve the intersection pairing, this gives an embedding of lattices $\iota: Q_P \oplus Q_{-X} \rightarrow Q_W$ such that the image has full rank in $Q_W$. This implies that $\iota\left(Q_{-X}\right)$ has finite index in $\iota(Q_P)^\bot$. Consider now the exact sequence 
\[
H_2(-X) \rightarrow H_2(W) \rightarrow H_2(W,-X)
\]
Note that $H_2(W,-X)$ is torsion-free since it is isomorphic to $H_2(P, Y) \cong H^2(P)$ and the torsion subgroup of $H^2(P)$ is isomorphic to the torsion subgroup of $H_1(P)=0$.  Thus the quotient of $H_2(W)$ by the image of $H_2(-X)$ is torsion-free. However the quotient of $H_2(W)$ by the image of $H_2(-X)$ contains a subgroup isomorphic to the quotient $\iota(Q_P)^\bot/\iota(Q_{-X})$ and so we conclude that $\iota(Q_{-X})=\iota(Q_P)^\bot$. Since $\iota\left(Q_{-X}\right)$ is isomorphic to $-Q_X$, the lemma follows.
\end{proof}

The following concept will play a key role in proving Theorem~\ref{thm:main}.
\begin{defn}\label{def:rigidity} A lattice $(L, Q_L)$ is {\em rigid}, if for any positive integer $N$ and any pair of lattice maps $\phi, \phi': (L, Q_L) \rightarrow (\Z^N, \langle 1\rangle^N)$, there is an automorphism $\alpha \in \aut (\Z^N)$ such that $\phi'= \alpha \circ \phi$.
\end{defn}

The relevance of this definition is that it gives a criterion for proving that for certain 3-manifolds the intersection form of any negative-definite filling is unique up to stabilisation with $\langle -1\rangle$ summands.

\begin{prop}\label{prop:rigidity_application}
If $Y$ is a rational homology sphere which bounds a smooth positive-definite 4-manifold $P$ with $H_1(P)=0$ such that the intersection form of $P$ is rigid, then for any pair of smooth negative-definite fillings $X$ and $X'$ with $b_2(X) \geq b_2(X')$, we have $$Q_X \cong Q_{X'} \oplus \langle-1 \rangle^{b_2(X)-b_2(X')}.$$
\end{prop}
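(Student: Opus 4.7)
The plan is to reduce to the case where $X$ and $X'$ have equal second Betti number and then extract rigidity from Donaldson's theorem via Lemma~\ref{lem:orthogonal_complement}.

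First I would handle the Betti number mismatch by stabilising. Set $k=b_2(X)-b_2(X')$ and let $X'':=X'\natural_{i=1}^k \overline{\mathbb{CP}}^2$. Then $X''$ is again a smooth negative-definite filling of $Y$, with $b_2(X'')=b_2(X)$ and $Q_{X''}\cong Q_{X'}\oplus\langle-1\rangle^k$. Thus it suffices to show that whenever $X$ and $X''$ are two negative-definite fillings of $Y$ with $b_2(X)=b_2(X'')$, one has $Q_X\cong Q_{X''}$.

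Next I would construct the relevant closed 4-manifolds. Since $P$ is positive-definite and $X,X''$ are negative-definite, the unions
\[
W:=P\cup_Y(-X),\qquad W'':=P\cup_Y(-X'')
\]
are closed, oriented, smooth, positive-definite 4-manifolds (orientations on $P$ and on $-X,-X''$ agree on $Y$). Setting $N:=b_2(P)+b_2(X)=b_2(P)+b_2(X'')$, Donaldson's diagonalization theorem gives $Q_W\cong Q_{W''}\cong \langle 1\rangle^N$. Applying Lemma~\ref{lem:orthogonal_complement} to each of $W$ and $W''$ (using $H_1(P)=0$), the inclusions of $P$ induce lattice maps
\[
\phi:Q_P\hookrightarrow \langle 1\rangle^N,\qquad \phi':Q_P\hookrightarrow \langle 1\rangle^N
\]
whose orthogonal complements are identified, via the inclusions of $-X$ and $-X''$, with $-Q_X$ and $-Q_{X''}$ respectively.

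Finally I would invoke the rigidity hypothesis. By Definition~\ref{def:rigidity}, applied to $\phi$ and $\phi'$, there exists an automorphism $\alpha\in\aut(\Z^N)$ preserving $\langle 1\rangle^N$ with $\phi'=\alpha\circ\phi$. Such an $\alpha$ carries $\phi(Q_P)^{\perp}$ isometrically onto $\phi'(Q_P)^{\perp}$, hence $-Q_X\cong -Q_{X''}$ as integral lattices, and therefore $Q_X\cong Q_{X''}\cong Q_{X'}\oplus\langle-1\rangle^k$. The only real substance is the rigidity input itself; the rest is a bookkeeping argument about orientations, Betti numbers, and the Mayer--Vietoris computation already packaged in Lemma~\ref{lem:orthogonal_complement}, so I do not expect any genuine obstacle in assembling the pieces.
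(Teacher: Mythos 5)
Your proof is correct and follows essentially the same route as the paper: glue $P$ to the orientation-reversed negative-definite fillings, apply Donaldson's theorem together with Lemma~\ref{lem:orthogonal_complement}, and use rigidity of $Q_P$ to carry one orthogonal complement isometrically onto the other. The only difference is cosmetic---you equalize Betti numbers by blowing up $X'$ (this should be an interior connected sum $\#$ with copies of $\overline{\mathbb{CP}}^2$, not a boundary connected sum $\natural$), whereas the paper stabilizes the lattice embedding via the natural inclusion $\Z^{n'}\subset\Z^{n}$; both devices produce two embeddings of $Q_P$ into the same $(\Z^N,\langle 1\rangle^N)$, to which Definition~\ref{def:rigidity} applies directly.
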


\begin{proof}
The closed manifolds $W= P\cup -X$ and $W'=P\cup -X'$ are smooth and positive-definite and hence diagonalizable by Donaldson's theorem. Let $n=b_2(W)=b_2(X)+b_2(P)$ and $n'=b_2(W')= b_2(X')+b_2(P)$. Note that $n-n' = b_2(X) - b_2(X')$. Thus by Lemma~\ref{lem:orthogonal_complement} we have embeddings 
\[\iota:Q_P\hookrightarrow (\Z^{n},\langle 1 \rangle^n) \qquad\text{ and }\qquad \iota':Q_P\hookrightarrow (\Z^{n'},\langle 1 \rangle^{n'})\]
such that the orthogonal complements give us isomorphisms 
$$-Q_X\cong \iota(Q_P)^\bot \qquad \text{ and } \qquad -Q_{X'}\cong \iota'(Q_P)^\bot.$$ 
By embedding $(\Z^{n'},\langle 1 \rangle^{n'})$ in $(\Z^{n},\langle 1 \rangle^n)$ in the natural way, we further get an embedding 
\[\iota'': Q_P\hookrightarrow (\Z^{n},\langle 1 \rangle^{n})\]
such that  $\iota''(Q_P)^\bot$ is isomorphic to $-Q_{X'} \oplus \langle 1 \rangle^{n-n'}$. However since  the intersection form of $P$ is rigid, there is an automorphism of $(\Z^n, \langle 1 \rangle^n)$ carrying $\iota''(Q_P)$ to $\iota(Q_P)$. This automorphism restricts to give isomorphism between the orthogonal complements.
\end{proof}

Any connected sum of lens spaces $L=\Lsum$ is the boundary of the manifold $-\natural_i X(p_i, p_i-q_i)$, which is positive-definite by Lemma~\ref{lem:length_one}. The key ingredient in the proof of the implication \ref{it:combinatorial}$\Rightarrow$\ref{it:rigid} of Theorem~\ref{thm:main} is the following result which will be established in Section~\ref{sec:lattice_analysis}.
\begin{thm}\label{thm:technical}
If $L=\Lsum$ is a connected sum of lens spaces such that the corresponding plumbing graph does not contain any of the configurations in Theorem~\ref{thm:main}\ref{it:combinatorial}, then the intersection form of $-\natural_i X(p_i, p_i-q_i)$ is a rigid lattice.
\end{thm}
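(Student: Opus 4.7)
The plan is to analyse lattice embeddings $\phi \colon Q \hookrightarrow (\Z^N, \langle 1 \rangle^N)$, where $Q$ denotes the positive-definite intersection form of $-\natural_i X(p_i, p_i - q_i)$. Rigidity asks that any two such embeddings differ by an automorphism of $\Z^N$. Since $Q$ is a boundary connect sum of positive-definite linear plumbings, it carries a distinguished generating set $\{v_1, \ldots, v_r\}$ indexed by the vertices of the dual plumbing graphs, with prescribed self-intersections $v_j \cdot v_j = b_j \geq 2$ and intersections $v_j \cdot v_k \in \{0,1\}$ governed by the edges. Thus $\phi$ is encoded by integers $c_{j,i}$ with $\phi(v_j) = \sum_i c_{j,i} e_i$, subject to the quadratic constraints $\sum_i c_{j,i} c_{k,i} = v_j \cdot v_k$.

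The first step, which I would carry out in the preliminaries section, is to translate the hypothesis: the ten forbidden subgraphs in the plumbing graph for $L(p_i, q_i)$ are dualised, via the Hirzebruch--Jung conjugate continued fraction (Riemenschneider's point rule), into combinatorial constraints on the weights $b_j$ of the dual plumbing. This dictionary is needed because rigidity is an assertion about the dual lattice, while the hypothesis is phrased on the original plumbing.

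The second step is a coefficient-size lemma: under the dualised hypothesis, every $c_{j,i}$ lies in $\{-1,0,1\}$. Once this is established, the embedding becomes a combinatorial object. Each $v_j$ is recorded by a signed subset $S_j \subseteq \{1, \ldots, N\}$ of size $b_j$, and the intersection constraints translate into restrictions on how consecutive supports overlap: adjacent $S_j$ share exactly one coordinate (with compatible signs), non-adjacent $S_j$ overlap in an even signed way summing to zero. By applying a coordinate permutation and sign changes on the $e_i$ — operations realising automorphisms of $\Z^N$ — one can normalise $\phi(v_1) = e_1 + \cdots + e_{b_1}$ and then induct along the chain, at each stage showing that the new support $S_{j+1}$ is forced up to an automorphism of $\Z^N$ fixing $\phi(v_1), \ldots, \phi(v_j)$. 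For disconnected plumbings, I would further argue that the supports corresponding to distinct connect summands must be disjoint, reducing the problem to the connected case.

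The main obstacle lies in the inductive step: when some $b_j$ is small relative to its neighbours, or when several small weights occur together, there can be a priori multiple inequivalent choices of support $S_{j+1}$ and of signs. The role of the forbidden-subgraph hypothesis is precisely to prohibit the configurations in which such genuine ambiguities arise — after dualising, each of the ten subgraphs of Theorem~\ref{thm:main}\ref{it:combinatorial} corresponds to a specific local pattern allowing an inequivalent extension. Therefore the proof in Section~\ref{sec:lattice_analysis} should take the form of a case analysis of the possible local patterns at each new vertex, and in every case where non-uniqueness might occur, exhibit one of the ten configurations in the original plumbing graph via the Riemenschneider correspondence. I expect the bookkeeping of signs, the combinatorics of overlapping supports, and the translation back through duality to be the most delicate parts of the argument.
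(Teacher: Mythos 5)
Your first step---dualizing the ten forbidden configurations via Riemenschneider's rule into weight conditions on the dual graph---is exactly how the paper begins (Lemma~\ref{lem:forbidden_configurations}). After that, however, there is a genuine gap. The pivotal claim of your induction, namely that at each stage the image of the next vertex is forced up to an automorphism of $\Z^N$ fixing the images of the previous vertices, and that any failure of such forcing can be traced to one of the ten forbidden subgraphs, is false. Already for the admissible chain with weights $2,2,2,3$ (adjusted weights $1,0,0,1$, the configuration of Lemma~\ref{lem:322rigidk=2}, which is not forbidden) one can embed $v_1=e_1-e_0$, $v_2=e_2-e_1$, $v_3=e_3-e_2$ and then either $v_4=-e_3+e_4+e_5$ or $v_4=e_0+e_1+e_2$; these two partial embeddings are not related by any automorphism fixing $v_1,v_2,v_3$, and the second is eliminated only by looking ahead at the next vertex $v_5$, whose pairing with $v_4$ would then be divisible by $3$---not by exhibiting a forbidden subgraph. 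So a greedy left-to-right induction along each linear component cannot close as described; one needs either look-ahead arguments or a different induction scheme. The paper's proof is organized around precisely this point: it first proves rigidity of chains of weight-two vertices (Lemma~\ref{lem:2chains_rigid}) and of chains with adjusted weights $1,0,\dots,0,1$ (Lemmas~\ref{lem:322rigidk=2} and~\ref{lem:0-chain_rigid}, the former computer-assisted), then proves the structural Lemma~\ref{lem:min_example_properties} guaranteeing a chain of twos whose support meets the support of every other vertex in at most one coordinate, and finally inducts on the number of vertices by deleting that chain and lowering the weights of its neighbours (Theorem~\ref{maintechnical:thm}).

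Two further steps of your plan are asserted without argument and are not as innocuous as presented. The ``coefficient-size lemma'' (all coordinates of the images lie in $\{-1,0,1\}$) is a consequence of rigidity, since standard embeddings have this form, but proving it in advance for the norm-$4$ and norm-$5$ vertices allowed by the dualized conditions already requires the interplay with adjacent chains of twos that constitutes the heart of the whole argument; the paper never isolates such a statement, and you offer no independent proof of it. Likewise, disjointness of the supports of distinct connected summands cannot simply be ``argued first'' to reduce to the connected case: cross-component overlaps are exactly the phenomenon that Lemma~\ref{lem:min_example_properties} is designed to control, and the paper's induction treats the disjoint union as a whole rather than one component at a time.
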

Modulo the proof of Theorem~\ref{thm:technical}, we are ready to prove Theorem~\ref{thm:main}.
\begin{proof}[Proof of Theorem~\ref{thm:main}]
The implication \ref{it:rigid}$\Rightarrow$\ref{it:min_filling} is evident. The implication \ref{it:min_filling}$\Rightarrow$\ref{it:submanifold} is established via its contrapositive in Lemma~\ref{lem:rational_blowdowns}. The implication \ref{it:submanifold}$\Rightarrow$\ref{it:combinatorial} is established via its contrapositive in Lemma~\ref{lem:constructing_submanifolds}. Finally, we establish \ref{it:combinatorial}$\Rightarrow$\ref{it:rigid}. If $L=\Lsum$ is a connected sum of lens spaces satisfying the hypotheses of \ref{it:combinatorial}, then the intersection form of $-\natural_i X(p_i, p_i-q_i)$ is rigid by Theorem~\ref{thm:technical}. Thus Proposition~\ref{prop:rigidity_application} implies that any pair of smooth negative-definite fillings of $L$ have the same intersection form up to stabilising with $\langle -1 \rangle$ summands. By Lemma~\ref{lem:length_one}, $\Xsum$ is a smooth negative-definite filling whose intersection form does not contain any elements of length $-1$. Thus we see that every other possible intersection form must be obtained by stabilising the intersection form of $\Xsum$.
\end{proof}

\section{Canonical plumbings and rigidity}\label{Section:preliminaries}
In this section we explore the relationship between the intersection form of $\Xsum$ and the intersection form of $-\natural_i X(p_i, p_i-q_i)$. We begin with some definitions.

\begin{defn}\label{def:QP}
Let $P$ be a disjoint union of linear plumbing graphs and $V(P)$ be the set of its vertices. We define the \emph{lattice associated to $P$} to be the lattice
\[
(\Z V(P), Q_P),
\]
i.e.\ the group is the free abelian group generated by the vertices of $P$ and the pairing
$Q_P$ is defined by the rule that for all vertices $u,v \in V(P)$ we have
\[
Q_P(u,v)= \begin{cases}
w(v) & \text{if $u=v$}\\
-1 &  \text{if $u,v$ adjacent}\\
0 & \text{otherwise,}
\end{cases}
\]
where $w(v)$ is the weight of $v$.
For brevity, we often refer to $(\Z V(P), Q_P)$ simply as $Q_P$. \end{defn}

It will frequently be convenient to think in terms of adjusted weights which are defined as follows.

\begin{defn}\label{def:adjustedweight}
Let $P$ be a disjoint union of linear plumbing graphs. We define the {\em adjusted weight} of a vertex $v\in P$ to be
\[w'(v):=w(v)-d(v),\]
where $d(v)$ is the degree of $v$.
\end{defn}
\begin{notation}
When depicting a vertex in a plumbing graph we use a solid dot to indicate that the labelling is the weight $\begin{tikzpicture}[xscale=1.0,yscale=1,baseline={(0,0)}]
    \node at (1, .4) {$w(v)$};
    \node (A_1) at (1, 0) {$\bullet$};
  \end{tikzpicture}$ and a hollow dot to indicate that the labelling is the adjusted weight $\begin{tikzpicture}[xscale=1.0,yscale=1,baseline={(0,0)}]
    \node at (1, .4) {$w'(v)$};
    \node (A_1) at (1, 0) {$\circ$};
  \end{tikzpicture}$.
\end{notation}
For example, with this notational convention
$$\begin{tikzpicture}[xscale=1.0,yscale=1,baseline={(0,0)}]
    \node at (1, .4) {$1$};
    \node at (2, .4) {$2$};
    \node at (3, .4) {$2$};
    \node (A1_1) at (1, 0) {$\circ$};
    \node (A1_2) at (2, 0) {$\circ$};
    \node (A1_3) at (3, 0) {$\circ$};
    \path (A1_1) edge [-] node [auto] {$\scriptstyle{}$} (A1_2);
        \path (A1_2) edge [-] node [auto] {$\scriptstyle{}$} (A1_3);
  \end{tikzpicture}
  \qquad\text{ and }\qquad
  \begin{tikzpicture}[xscale=1.0,yscale=1,baseline={(0,0)}]
    \node at (1, .4) {$2$};
    \node at (2, .4) {$4$};
    \node at (3, .4) {$3$};
    \node (A1_1) at (1, 0) {$\bullet$};
    \node (A1_2) at (2, 0) {$\bullet$};
    \node (A1_3) at (3, 0) {$\bullet$};
    \path (A1_1) edge [-] node [auto] {$\scriptstyle{}$} (A1_2);
        \path (A1_2) edge [-] node [auto] {$\scriptstyle{}$} (A1_3);
  \end{tikzpicture}
  $$
  both depict the same plumbing graph.
  
If a 4-manifold $X$ is obtained by plumbing $D^2$-bundles over $S^2$ according to a weighted plumbing graph $P$, then the intersection form of $X$ is isomorphic to the lattice $(\Z V(P), Q_P)$, where the isomorphism sends a vertex to the corresponding spherical generator in $H_2(X)$.

The following lemma taken with $M=2$ implies Lemma~\ref{lem:length_one}.
\begin{lem}\label{lem:def_calc}
If $P$ is a disjoint union of linear plumbing graphs in which every vertex has weight $w(v)\geq M$ for some integer $M\geq 2$, then for any $x\in \Z V(P)$ with $x\neq 0$, we have $Q_P(x,x)\geq M$.
\end{lem}
\begin{proof}
If $P$ can be decomposed into connected components $P=\bigsqcup P_i$, then we have a decomposition of the lattice $Q_P = \bigoplus Q_{P_i}$. Thus it suffices to prove the lemma when $P$ is connected. In this case, suppose that we have vertices $v_1, \dots, v_n$, where these are ordered so that $v_i$ and $v_{i+1}$ are adjacent for $i=1, \dots, n-1$. By hypothesis, $w(v_i)\geq M$ for all $i$. Thus if we take $x= \sum_{i=1}^n c_i v_i$ for integers $c_i$, then completing the square reveals that for any $1\leq  j \leq n$ we have 
\begin{align*}
Q_P(x,x)&= \sum_{i=1}^{n} (w(v_i)-2) c_i^2 + \sum_{i=1}^{n-1} (c_i - c_{i+1})^2\\
&\geq (w(v_j)-2) c_j^2+c_1^2 + (c_1 - c_{2})^2 + \dots + (c_{n-1} - c_n)^2 + c_n^2\\
&\geq (w(v_j)-2) |c_j| +|c_1|+ |c_1 - c_{2}| + \dots + |c_{n-1} - c_n| + |c_n|\\
&\geq (w(v_j)-2) |c_j|+ 2|c_j|\\
&=w(v_j)|c_j|\\
&\geq M|c_j|
\end{align*}
where we applied the triangle inequality to show:
\[
|c_j|\leq |c_1|+ |c_1 - c_{2}| + \dots + |c_{j-1} - c_j|
\]
and
\[
|c_j|\leq |c_j - c_{j+1}| + \dots + |c_{n-1} - c_n| + |c_n|.
\]

If $x\neq 0$, then $c_j\neq 0$ for at least one $j$ and so we have the required bound.
\end{proof}

\subsection{The dual plumbing}
A connected sum of lens spaces $\Lsum$ arises as the boundary the plumbings $\Xsum$ and $-\natural_i X(p_i, p_i-q_i)$. If the manifold $\Xsum$ is obtained by plumbing according to a canonical plumbing graph $P$ and $-\natural_i X(p_i, p_i-q_i)$ is obtained by plumbing along a canonical plumbing graph $P^*$, then we say that $P^*$ is the {\em dual} of $P$. The task of this section is to relate $P$ and $P^*$ explicitly.

We will work with negative continued fractions and use the following notation
$$[a_1,\dots, a_n]^-= a_1 - \cfrac{1}{a_2 - \cfrac{1}{\ddots - \cfrac{1}{a_n}}}.$$

Here and throughout the article, we will use $a^{[m]}$ to denote the tuple 
\[
\underbrace{a, a, \dots,a, a}_{\text{$m$ times}},
\]
with the understanding that $a^{[0]}$ denotes the empty tuple. The Riemenschneider point rule \cite{Riemenschneider:1974-1} tells us that if $p/q$ has the continued fraction of the form
\[
\frac{p}{q}=\left[2^{[a_0]}, b_1, 2^{[a_1]}, \dots, b_k, 2^{[a_k]}\right]^-
\]
where the $a_i$ and $b_i$ are integers satisfying $a_i\geq 0$ and $b_i\geq 3$, then $p/(p-q)$ takes the form
\begin{equation}\label{eq:Riemanschneider_long}
\frac{p}{p-q}=\begin{cases}
\left[a_0+1\right]^- &\text{if $k=0$}\\
\left[a_0+2, 2^{[b_1-3]}, a_1+3, 2^{[b_2-3]}, \dots,  2^{[b_k-3]}, a_k + 2\right]^- &\text{if $k\geq 1$}.
\end{cases}
\end{equation}

When recast in terms of plumbings \eqref{eq:Riemanschneider_long} gives a nice relationship between the weights of $P$ and the adjusted weights of $P^*$.
\begin{lem}\label{lem:duality}
Suppose that a plumbing graph $P$ contains a linear component of the form
$$\begin{tikzpicture}[xscale=1.0,yscale=1,baseline={(0,0)}]
    \node at (1-0.1, .4) {$-c_1$};
    \node at (2-0.1, .4) {$-c_2$};
        \node at (3-0.1, .4) {$-c_3$};
    \node at (5-0.1, .4) {$-c_{n}$};
    \node (A1_1) at (1, 0) {$\bullet$};
    \node (A1_2) at (2, 0) {$\bullet$};
    \node (A1_3) at (3, 0) {$\bullet$};
    \node (A1_4) at (4, 0) {$\cdots$};
    \node (A1_5) at (5, 0) {$\bullet$};
    \path (A1_2) edge [-] node [auto] {$\scriptstyle{}$} (A1_3);
    \path (A1_3) edge [-] node [auto] {$\scriptstyle{}$} (A1_4);
        \path (A1_4) edge [-] node [auto] {$\scriptstyle{}$} (A1_5);
    \path (A1_1) edge [-] node [auto] {$\scriptstyle{}$} (A1_2);
  \end{tikzpicture},$$
  where the $c_i$ are integers satisfying
  \[
  (c_1, \dots, c_n)= \left(2^{[a_0]}, b_1, 2^{[a_1]}, \dots, b_k, 2^{[a_k]}\right),
  \]
for $a_i$ and $b_i$ integers satisfying $a_i\geq 0$ and $b_i\geq 3$. Then the corresponding component of $P^*$ is of the form
  $$\begin{tikzpicture}[xscale=1.0,yscale=1,baseline={(0,0)}]
    \node at (1-0, .4) {$d_1$};
    \node at (2-0, .4) {$d_2$};
        \node at (3-0, .4) {$d_3$};
    \node at (5-0, .4) {$d_{n'}$};
    \node (A1_1) at (1, 0) {$\circ$};
    \node (A1_2) at (2, 0) {$\circ$};
    \node (A1_3) at (3, 0) {$\circ$};
    \node (A1_4) at (4, 0) {$\cdots$};
    \node (A1_5) at (5, 0) {$\circ$};
    \path (A1_2) edge [-] node [auto] {$\scriptstyle{}$} (A1_3);
    \path (A1_3) edge [-] node [auto] {$\scriptstyle{}$} (A1_4);
        \path (A1_4) edge [-] node [auto] {$\scriptstyle{}$} (A1_5);
    \path (A1_1) edge [-] node [auto] {$\scriptstyle{}$} (A1_2);
  \end{tikzpicture},$$
  where
  \[
 \left(d_1, \dots, d_{n'}\right) =\left(a_0+1, 0^{[b_1-3]}, a_1+1, \dots, 0^{[b_k-3]}, a_k+1\right).
  \]
\end{lem}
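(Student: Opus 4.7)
The plan is to reduce the lemma to the classical Riemenschneider point rule (already quoted as \eqref{eq:Riemanschneider_long}) and then perform a bookkeeping translation between weights and adjusted weights. Since $P$ and $P^*$ are both disjoint unions of linear plumbings corresponding to the individual lens space summands, it suffices to treat a single component, i.e.\ a single factor $L(p,q)$. By definition, the corresponding component of $P^*$ is the linear plumbing whose positive weights $c'_1, \ldots, c'_{n'}$ are determined by the negative continued fraction expansion $p/(p-q) = [c'_1, \ldots, c'_{n'}]^-$; Riemenschneider's rule then gives this expansion in the piecewise form \eqref{eq:Riemanschneider_long}.

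The core content of the proof is to check that \eqref{eq:Riemanschneider_long} collapses to the single uniform expression \eqref{eq:duality_formula} once rewritten via the adjusted continued fraction. Applying the definition $[c_1, \ldots, c_\ell]' = [c_1 + 1, c_2 + 2, \ldots, c_{\ell-1} + 2, c_\ell + 1]^-$ to the tuple $(a_0 + 1, [0]^{b_1 - 3}, a_1 + 1, \ldots, [0]^{b_k - 3}, a_k + 1)$: each interior zero from $[0]^{b_i - 3}$ acquires a $+2$ and becomes the $[2]^{b_i - 3}$ block; each interior $a_i + 1$ acquires $+2$ to become $a_i + 3$; and the endpoint entries $a_0 + 1$ and $a_k + 1$ each acquire $+1$ to become $a_0 + 2$ and $a_k + 2$. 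This reproduces the $k > 1$ case of \eqref{eq:Riemanschneider_long} verbatim. The $k = 1$ case is the same calculation with no interior $a_i$, and the $k = 0$ case is the degenerate single-entry identity $[a_0 + 1]' = [a_0 + 1]^-$ (a single vertex receives no adjustment since its degree is zero).

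Finally, I pass from weights $c'_i$ to adjusted weights $w'(v_i) = c'_i - d(v_i)$. In a linear plumbing with $n' \geq 2$ vertices the endpoints have degree $1$ and the interior vertices have degree $2$, so the subtractions performed by $w'$ exactly cancel the $+1$ at endpoints and $+2$ in the interior that were introduced by the $[\cdot]'$ operation. These cancellations produce precisely the tuple $(d_1, \ldots, d_{n'}) = (a_0 + 1, [0]^{b_1 - 3}, a_1 + 1, \ldots, [0]^{b_k - 3}, a_k + 1)$ as the adjusted weight sequence of $P^*$, which is the claim. The main obstacle is really just the case analysis unifying \eqref{eq:Riemanschneider_long} into \eqref{eq:duality_formula}; the whole argument then amounts to verifying that the adjusted continued fraction and adjusted weight conventions have been designed precisely so that this unification becomes tautological.
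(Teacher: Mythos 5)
Your proposal is correct and follows essentially the same route as the paper, which presents Lemma~\ref{lem:duality} as the direct plumbing-level translation of the Riemenschneider rule \eqref{eq:Riemanschneider_long} unified into \eqref{eq:duality_formula} via the adjusted continued fraction; you have simply written out the case check ($k=0,1,>1$) and the weight-versus-adjusted-weight bookkeeping that the paper leaves implicit. Your handling of the degenerate single-vertex case (degree zero, so no adjustment) is the right reading of the $[\cdot]'$ convention.
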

\begin{proof}
According to \eqref{eq:Riemanschneider_long} the corresponding component of $P^*$ is of the form
$$\begin{tikzpicture}[xscale=1.0,yscale=1,baseline={(0,0)}]
    \node at (1-0, .4) {$\widetilde{d}_1$};
    \node at (2-0, .4) {$\widetilde{d}_2$};
    \node at (3-0, .4) {$\widetilde{d}_3$};
    \node at (5-0, .4) {$\widetilde{d}_{n'}$};
    \node (A1_1) at (1, 0) {$\bullet$};
    \node (A1_2) at (2, 0) {$\bullet$};
    \node (A1_3) at (3, 0) {$\bullet$};
    \node (A1_4) at (4, 0) {$\cdots$};
    \node (A1_5) at (5, 0) {$\bullet$};
    \path (A1_2) edge [-] node [auto] {$\scriptstyle{}$} (A1_3);
    \path (A1_3) edge [-] node [auto] {$\scriptstyle{}$} (A1_4);
        \path (A1_4) edge [-] node [auto] {$\scriptstyle{}$} (A1_5);
    \path (A1_1) edge [-] node [auto] {$\scriptstyle{}$} (A1_2);
  \end{tikzpicture},$$
  where $$(\widetilde{d}_1, \dots, \widetilde{d}_{n'})=(a_0+1)$$ if $k=0$ and
  $$(\widetilde{d}_1, \dots, \widetilde{d}_{n'})=(a_0+2, 2^{[b_1-3]}, a_1+3, 2^{[b_2-3]}, \dots,  2^{[b_k-3]}, a_k + 2)
  $$
  if $k\geq 1$.
Subtracting the degree of the vertex from each of these weights produces the tuple of adjusted weights in the statement of the proposition. 
\end{proof}

This allows us to recast the conditions of Theorem~\ref{thm:main}\ref{it:combinatorial} in terms of the properties of the dual. More precisely, the conditions of Theorem~\ref{thm:main}\ref{it:combinatorial} translate to the following list of conditions on the dual, which we will refer to as the Working Conditions.
\begin{conditions} The Working Conditions for $\Gamma$, a disjoint union of linear plumbing graphs, are the following:
\begin{enumerate}[label=\Roman*]
\item\label{it:positivity} every vertex of $\Gamma$ satisfies $w(v)\geq 2$;
\item\label{it:largeweight} every vertex of $\Gamma$ satisfies $w'(v)\leq 3$ and $\Gamma$ contains at most one vertex $v$ with $w'(v) >1$;
\item\label{it:3adjacent}  the graph $\Gamma$ does not contain three adjacent vertices with $w'(v)>0$;
\item\label{it:weight3condition} if $\Gamma$ contains a vertex $v$ with $w'(v)=3$, then $\Gamma$ does not contain a subgraph of the form $$\begin{tikzpicture}[xscale=1.0,yscale=1,baseline={(0,0)}]
    \node at (1, .4) {$1$};
    \node at (2.0, .4) {$1$};
    \node (A1_1) at (1, 0) {$\circ$};
    \node (A1_2) at (2, 0) {$\circ$};
    \path (A1_1) edge [-] node [auto] {$\scriptstyle{}$} (A1_2);
  \end{tikzpicture};$$
\item\label{it:long_chain} if $\Gamma$ contains a chain of adjacent vertices $v_0, \dots, v_{k+1}$ where 
$$\text{$w'(v_0)\geq 1$, $w'(v_1)=\dots = w'(v_{k})=0$, $w'(v_{k+1})=1$ and $k \neq 0$,}$$
then $k\geq w'(v_0)+1$.
\item\label{it:forbidden_configs} the graph $\Gamma$ does not contain any subgraphs of the following forms:

\begin{enumerate}[label=(\alph*),font=\upshape]
\item\label{it:110012} $\begin{tikzpicture}[xscale=1.0,yscale=1,baseline={(0,0)}]
    \node at (-2.0+0.0, .4) {$1$};
    \node at (-1.0+0.0, .4) {$1$};
    \node at (0.0, .4) {$0$};
    \node at (1.0, .4) {$0$};
    \node at (2.0, .4) {$1$};
    \node at (3.0, .4) {$2$};
        \node (A1_8) at (-2, 0) {$\circ$};
        \node (A1_9) at (-1, 0) {$\circ$};
    \node (A1_0) at (0, 0) {$\circ$};
    \node (A1_1) at (1, 0) {$\circ$};
    \node (A1_2) at (2, 0) {$\circ$};
    \node (A1_3) at (3, 0) {$\circ$};
            \path (A1_8) edge [-] node [auto] {$\scriptstyle{}$} (A1_9);
        \path (A1_9) edge [-] node [auto] {$\scriptstyle{}$} (A1_0);
    \path (A1_0) edge [-] node [auto] {$\scriptstyle{}$} (A1_1);
    \path (A1_1) edge [-] node [auto] {$\scriptstyle{}$} (A1_2);
        \path (A1_2) edge [-] node [auto] {$\scriptstyle{}$} (A1_3);
  \end{tikzpicture}$,
\item\label{it:31001} $\begin{tikzpicture}[xscale=1.0,yscale=1,baseline={(0,0)}]
    \node at (-1.0, .4) {$3$};
    \node at (0.0, .4) {$1$};
    \node at (1.0, .4) {$0$};
    \node at (2.0, .4) {$0$};
    \node at (3.0, .4) {$1$};
        \node (A1_9) at (-1, 0) {$\circ$};
    \node (A1_0) at (0, 0) {$\circ$};
    \node (A1_1) at (1, 0) {$\circ$};
    \node (A1_2) at (2, 0) {$\circ$};
    \node (A1_3) at (3, 0) {$\circ$};
        \path (A1_9) edge [-] node [auto] {$\scriptstyle{}$} (A1_0);
    \path (A1_0) edge [-] node [auto] {$\scriptstyle{}$} (A1_1);
    \path (A1_1) edge [-] node [auto] {$\scriptstyle{}$} (A1_2);
        \path (A1_2) edge [-] node [auto] {$\scriptstyle{}$} (A1_3);
  \end{tikzpicture}$.
\end{enumerate}
\end{enumerate}
\end{conditions}

\begin{lem}\label{lem:forbidden_configurations}
Let $P$ be a disjoint union of linear plumbing graphs with $w(v)\leq -2$ for all vertices. If $P$ does not contain any of the configurations listed in Theorem~\ref{thm:main}\ref{it:combinatorial}, then the dual $P^*$ satisfies the Working Conditions.
\end{lem}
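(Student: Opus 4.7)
The plan is to proceed by contrapositive for each Working Condition: assuming that $P^*$ violates one of Conditions (I)--(VI), I would exhibit one of the forbidden configurations (a)--(j) as an induced subgraph of $P$. The entire argument is driven by Lemma~\ref{lem:duality}, which translates each component of $P$ with continued fraction data $([2]^{a_0}, b_1, [2]^{a_1}, \dots, b_k, [2]^{a_k})$ (where $a_i \geq 0$ and $b_j \geq 3$) into the adjusted-weight sequence $(a_0+1, [0]^{b_1-3}, a_1+1, \dots, [0]^{b_k-3}, a_k+1)$ of the corresponding component of $P^*$.

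Conditions (I), (II), and (III) should yield to short observations. Condition (I) is automatic since every adjusted weight of $P^*$ is non-negative and the endpoints of each $P^*$-component (where $d(v)=1$) correspond to $a_0+1$ or $a_k+1 \geq 1$, so $w(v)=w'(v)+d(v)\geq 2$ at every vertex. Conditions (II) and (III) reduce to forbidden configuration \ref{it:2-2} and to \ref{it:33}--\ref{it:3223}, respectively: a vertex with $w'(v) > 3$ or two vertices with $w'(v) > 1$ force three consecutive $-2$'s or two non-adjacent $-2$'s in $P$, while three consecutive vertices of positive adjusted weight in $P^*$ correspond in $P$ to two $-3$'s separated by an interval of at most two $-2$'s, producing \ref{it:33}, \ref{it:323}, or \ref{it:3223} according to the length of the interval.

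The more delicate cases are (IV), (V), and (VI). For (IV), a vertex with $w'(v)=3$ gives two adjacent $-2$'s in $P$, while the pair of adjacent $w'(v)=1$ vertices in $P^*$ forces a $-3$ in $P$ whose only neighbours have weight $\leq -3$; this realizes configuration \ref{it:3-22}. For (V), a chain $v_0, \dots, v_{k+1}$ in $P^*$ with $k \leq w'(v_0)$ corresponds in $P$ to a vertex of weight $-(k+3)$ preceded by $w'(v_0)-1$ consecutive $-2$'s; since Condition (II) already restricts $w'(v_0)\leq 3$, a finite case split on the pair $(k, w'(v_0))$ with $k \in \{1, \dots, w'(v_0)\}$ pinpoints one of \ref{it:4}, \ref{it:52}, or \ref{it:622}. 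Finally, the two subgraphs listed in (VI) translate directly under the duality formula into the subchains $-3,-5,-3,-2$ and $-2,-2,-3,-5$ of $P$, yielding \ref{it:3532} and \ref{it:2235}.

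The main obstacle I anticipate is careful bookkeeping at the endpoints of $P^*$-components (where $d(v)=1$ rather than $2$) and across multiple components of $P$, since some forbidden configurations, notably \ref{it:2-2} and \ref{it:3-22}, need not be connected. Organising the argument strictly by which Working Condition is violated, treating interior and endpoint positions in parallel, and applying Lemma~\ref{lem:duality} component by component should keep the case analysis tractable and avoid duplication.
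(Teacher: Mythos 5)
Your proposal is correct and follows essentially the same route as the paper: argue the contrapositive condition by condition, using Lemma~\ref{lem:duality} to translate a violation of each Working Condition into one of the forbidden configurations (a)--(j) in $P$, with the same case analyses for Conditions (IV)--(VI) (in particular the split over $(k,w'(v_0))$ for (V) and the direct translations $-3,-5,-3,-2$ and $-2,-2,-3,-5$ for (VI)). The only cosmetic difference is that the paper records that a failure of Condition (III) may also produce configuration \ref{it:2-2} (when the two $-3$'s are separated by three or more $-2$'s), whereas you absorb that case by invoking the assumed absence of \ref{it:2-2} to bound the interval by two, which is legitimate under the hypothesis.
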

\begin{proof}
Note that Condition~\ref{it:positivity} is implied by the definition of the dual $P^*$. The content of the lemma is in establishing the remaining five conditions. The strategy is to go through each of these conditions in turn and use Lemma~\ref{lem:duality} to show that if they fail then $P$ would contain one of the subgraphs listed in Theorem~\ref{thm:main}\ref{it:combinatorial}.

If $P^*$ contains a vertex $v$ with $w'(v)=n>1$, then Lemma~\ref{lem:duality} shows that $P$ contains a chain of $n-1$ vertices of weight $-2$. Since $P$ cannot contain two nonadjacent vertices of weight $-2$, this implies that $n\leq 3$. Moreover if $P^*$ contains two vertices with $w'(v)\geq 2$, then this would give rise to two nonadjacent vertices of weight $-2$ in $P$. This establishes Condition~\ref{it:largeweight}.

Suppose that $P^*$ contains a subgraph of the form  
$$\begin{tikzpicture}[xscale=1.0,yscale=1,baseline={(0,0)}]
    \node at (1, .4) {$a$};
    \node at (2, .4) {$b$};
    \node at (3, .4) {$c$};
    \node (A1_1) at (1, 0) {$\circ$};
    \node (A1_2) at (2, 0) {$\circ$};
    \node (A1_3) at (3, 0) {$\circ$};
    \path (A1_1) edge [-] node [auto] {$\scriptstyle{}$} (A1_2);
        \path (A1_2) edge [-] node [auto] {$\scriptstyle{}$} (A1_3);
  \end{tikzpicture}$$ with integers $a,b,c>0$. Then Lemma~\ref{lem:duality} implies that $P$ contains a subgraph of the form $$\begin{tikzpicture}[xscale=1.0,yscale=1,baseline={(0,0)}]
    \node at (-1-.1, .4) {$-3$};
    \node at (0-.1, .4) {$-2$};
    \node at (2-.1, .4) {$-2$};
    \node at (3-.1, .4) {$-3$};
        \node (A1_9) at (-1, 0) {$\bullet$};
    \node (A1_0) at (0, 0) {$\bullet$};
    \node (A1_1) at (1, 0) {$\cdots$};
    \node (A1_2) at (2, 0) {$\bullet$};
    \node (A1_3) at (3, 0) {$\bullet$};
           
        \path (A1_9) edge [-] node [auto] {$\scriptstyle{}$} (A1_0);
    \path (A1_0) edge [-] node [auto] {$\scriptstyle{}$} (A1_1);
    \path (A1_1) edge [-] node [auto] {$\scriptstyle{}$} (A1_2);
        \path (A1_2) edge [-] node [auto] {$\scriptstyle{}$} (A1_3);
  \end{tikzpicture},$$ where there are $b-1$ vertices of weight $-2$. Irrespective of the value of $b$ this shows that $P$ contains one the sub-plumbing graphs \ref{it:2-2}, \ref{it:33}, \ref{it:323}, or \ref{it:3223} from Theorem~\ref{thm:main}\ref{it:combinatorial}. This establishes Condition~\ref{it:3adjacent}.

Condition~\ref{it:weight3condition} is established by noting that if $P^*$ contains a vertex with adjusted weight three and a sub-plumbing graph of the form $$\begin{tikzpicture}[xscale=1.0,yscale=1,baseline={(0,0)}]
    \node at (1.0, .4) {$1$};
    \node at (2.0, .4) {$1$};
    \node (A1_1) at (1, 0) {$\circ$};
    \node (A1_2) at (2, 0) {$\circ$};
    \path (A1_1) edge [-] node [auto] {$\scriptstyle{}$} (A1_2);
  \end{tikzpicture},$$ then this would give rise to a configuration of the form \ref{it:3-22} from Theorem~\ref{thm:main}\ref{it:combinatorial} in $P$.
  
Now suppose that $P^*$ contains a chain of adjacent vertices $v_0, \dots, v_{k+1}$ where 
$$\text{$w'(v_0)\geq 1$, $w'(v_1)=\dots = w'(v_{k})=0$, $w'(v_{k+1})=1$ and $k \neq 0$.}$$
Condition~\ref{it:largeweight} implies that $w'(v_0)\in\{1,2,3\}$. We consider each of these possibilities in turn.
\begin{itemize}
\item If $w'(v_0)=1$, then this corresponds to a subgraph of the form
\[\begin{tikzpicture}[xscale=1.0,yscale=1,baseline={(0,0)}]
    \node at (1-.1, .4) {$-(k+3)$};
    \node (A1_1) at (1, 0) {$\bullet$};
  \end{tikzpicture}\]
in $P$. Thus in order to avoid a copy of Theorem~\ref{thm:main}\ref{it:combinatorial} \ref{it:4} in $P$ we must have $k\geq 2$.
\item If $w'(v_0)=2$, then this corresponds to a subgraph of the form
\[\begin{tikzpicture}[xscale=1.0,yscale=1,baseline={(0,0)}]
    \node at (1-.1, .4) {$-(k+3)$};
    \node at (2-.1, .4) {$-2$};
    \node (A1_1) at (1, 0) {$\bullet$};
    \node (A1_2) at (2, 0) {$\bullet$};
    \path (A1_1) edge [-] node [auto] {$\scriptstyle{}$} (A1_2);
  \end{tikzpicture}\]
in $P$. Thus in order to avoid a copy of Theorem~\ref{thm:main}\ref{it:combinatorial} \ref{it:4} and \ref{it:52} in $P$ we must have $k\geq 3$.
\item If $w'(v_0)=3$, then this corresponds to a subgraph of the form
\[\begin{tikzpicture}[xscale=1.0,yscale=1,baseline={(0,0)}]
    \node at (1-.1, .4) {$-(k+3)$};
    \node at (2-.1, .4) {$-2$};
    \node at (3-.1, .4) {$-2$};
    \node (A_1) at (1, 0) {$\bullet$};
    \node (A_2) at (2, 0) {$\bullet$};
    \node (A_3) at (3, 0) {$\bullet$};
        \path (A_1) edge [-] node [auto] {$\scriptstyle{}$} (A_2);
    \path (A_2) edge [-] node [auto] {$\scriptstyle{}$} (A_3);
  \end{tikzpicture}\]
in $P$. Thus in order to avoid a copy of Theorem~\ref{thm:main}\ref{it:combinatorial} \ref{it:4}, \ref{it:52} and \ref{it:622} in $P$ we must have $k\geq 4$.
\end{itemize}
In all cases, we have established $k\geq w'(v_0)+1$ which gives Condition~\ref{it:long_chain}.

Finally, we check that if $P^*$ contains one of the configurations of Condition~\ref{it:forbidden_configs}, then $P$ would contain a subgraph containing one configurations listed in Theorem~\ref{thm:main}\ref{it:combinatorial}. The correspondence is as follows: 
\begin{enumerate}[label=(\alph*),font=\upshape]
\item $\begin{tikzpicture}[xscale=1.0,yscale=1,baseline={(0,0)}]
    \node at (-2.0+0.0, .4) {$1$};
    \node at (-1.0+0.0, .4) {$1$};
    \node at (0.0, .4) {$0$};
    \node at (1.0, .4) {$0$};
    \node at (2.0, .4) {$1$};
    \node at (3.0, .4) {$2$};
        \node (A1_8) at (-2, 0) {$\circ$};
        \node (A1_9) at (-1, 0) {$\circ$};
    \node (A1_0) at (0, 0) {$\circ$};
    \node (A1_1) at (1, 0) {$\circ$};
    \node (A1_2) at (2, 0) {$\circ$};
    \node (A1_3) at (3, 0) {$\circ$};
            \path (A1_8) edge [-] node [auto] {$\scriptstyle{}$} (A1_9);
        \path (A1_9) edge [-] node [auto] {$\scriptstyle{}$} (A1_0);
    \path (A1_0) edge [-] node [auto] {$\scriptstyle{}$} (A1_1);
    \path (A1_1) edge [-] node [auto] {$\scriptstyle{}$} (A1_2);
        \path (A1_2) edge [-] node [auto] {$\scriptstyle{}$} (A1_3);
  \end{tikzpicture}$ gives rise to a subgraph of the form $$\begin{tikzpicture}[xscale=1.0,yscale=1,baseline={(0,0)}]
    \node at (1-.1, .4) {$-3$};
    \node at (2-.1, .4) {$-5$};
    \node at (3-.1, .4) {$-3$};
    \node at (4-.1, .4) {$-2$};
    \node (A1_1) at (1, 0) {$\bullet$};
    \node (A1_2) at (2, 0) {$\bullet$};
    \node (A1_3) at (3, 0) {$\bullet$};
    \node (A1_4) at (4, 0) {$\bullet$};
    \path (A1_2) edge [-] node [auto] {$\scriptstyle{}$} (A1_3);
    \path (A1_3) edge [-] node [auto] {$\scriptstyle{}$} (A1_4);
    \path (A1_1) edge [-] node [auto] {$\scriptstyle{}$} (A1_2);
  \end{tikzpicture}$$ and

\item $\begin{tikzpicture}[xscale=1.0,yscale=1,baseline={(0,0)}]
    \node at (-1.0+0.0, .4) {$3$};
    \node at (0.0, .4) {$1$};
    \node at (1.0, .4) {$0$};
    \node at (2.0, .4) {$0$};
    \node at (3.0, .4) {$1$};
        \node (A1_9) at (-1, 0) {$\circ$};
    \node (A1_0) at (0, 0) {$\circ$};
    \node (A1_1) at (1, 0) {$\circ$};
    \node (A1_2) at (2, 0) {$\circ$};
    \node (A1_3) at (3, 0) {$\circ$};
        \path (A1_9) edge [-] node [auto] {$\scriptstyle{}$} (A1_0);
    \path (A1_0) edge [-] node [auto] {$\scriptstyle{}$} (A1_1);
    \path (A1_1) edge [-] node [auto] {$\scriptstyle{}$} (A1_2);
        \path (A1_2) edge [-] node [auto] {$\scriptstyle{}$} (A1_3);
  \end{tikzpicture}$ gives rise to a subgraph of the form $$\begin{tikzpicture}[xscale=1.0,yscale=1,baseline={(0,0)}]
    \node at (1-.1, .4) {$-2$};
    \node at (2-.1, .4) {$-2$};
    \node at (3-.1, .4) {$-3$};
    \node at (4-.1, .4) {$-5$};
    \node (A1_1) at (1, 0) {$\bullet$};
    \node (A1_2) at (2, 0) {$\bullet$};
    \node (A1_3) at (3, 0) {$\bullet$};
    \node (A1_4) at (4, 0) {$\bullet$};
    \path (A1_2) edge [-] node [auto] {$\scriptstyle{}$} (A1_3);
    \path (A1_3) edge [-] node [auto] {$\scriptstyle{}$} (A1_4);
    \path (A1_1) edge [-] node [auto] {$\scriptstyle{}$} (A1_2);
  \end{tikzpicture}$$.
\end{enumerate}
This establishes Condition~\ref{it:forbidden_configs}.
\end{proof}

\section{Rigid linear lattices}\label{sec:lattice_analysis}
The objective of this section is to prove Theorem~\ref{thm:technical}. Throughout this section, we will take $\Gamma$ to be a disjoint union of linear plumbing graphs. The aim is to show that if $\Gamma$ satisfies the Working Conditions, then the corresponding lattice $Q_\Gamma$ is rigid. By Lemma~\ref{lem:forbidden_configurations} the dual of a plumbing graph satisfying Theorem~\ref{thm:main}\ref{it:combinatorial} satisfies the Working Conditions, therefore this is sufficient to establish Theorem~\ref{thm:technical}.

To this end, we introduce some terminology specific to the problem of mapping linear lattices into diagonal lattices. Throughout this section, for simplicity we refer to the standard positive-definite lattice $(\Z^n,\langle 1 \rangle^n)$ as $\Z^n$.

\begin{defn} An {\em embedding} of $\Gamma$ into $\Z^n$ is a function 
$$\phi: V(\Gamma) \rightarrow \Z^n$$
such that for all $u,v\in V(\Gamma)$, we have
\[
\phi(u)\cdot \phi(v)= 
\begin{cases}
w(u) &\text{if $u=v$}\\
-1 &\text{if $u$ and $v$ adjacent}\\
0 &\text{otherwise.}
\end{cases}
\]\end{defn}
Since lattice maps of $Q_\Gamma$ into $\Z^n$ are determined by the image of the vertices, embeddings of $\Gamma$ into $\Z^n$ are in bijection with lattice maps of $Q_\Gamma$ into $\Z^n$. Similarly, we define what it means for a plumbing graph to be rigid.

\begin{defn}
A plumbing graph $\Gamma$ is {\em rigid} if for any integer $n\geq 1$ and any pair of embeddings $\phi, \phi': V(\Gamma)\rightarrow \Z^n$, there is an element $\alpha \in \aut(\Z^n)$ such that $\phi'(v) = \alpha(\phi(v))$ for each $v \in V(\Gamma)$.
\end{defn}

Thus the lattice $Q_\Gamma$ is rigid in the sense of Definition~\ref{def:rigidity} if and only if $\Gamma$ is rigid as defined in terms of embeddings here.

For a vector $v\in \Z^n$, we define the \emph{support} of $v$ to be
\[
\supp(v):=\{e\in \Z^n \, | \, \norm{e}=1 \, \text{ and } \, e\cdot v \neq 0 \}/( e\sim -e)
\]
that is the set of unit vectors pairing nontrivially with $v$ up to sign. For vectors $v_1, \ldots, v_k \in \Z^n$, we define the support of $v_1, \ldots, v_k$ to be
$$\supp(v_1,\ldots, v_k):=\supp(v_1) \cup \cdots \cup\supp(v_k).$$

\vspace{-.2cm}
\begin{rem}We quotient out the support by the relation $e\sim -e$, so as to avoid explicitly choosing an orthonormal basis for $\Z^n$. However, we see that for any choice of orthonormal basis $\supp(v)$ contains one element for each nonzero coordinate of $v$. This gives the inequality
\begin{equation}
\norm{v}\geq |\supp(v)|
\end{equation}
with equality if and only if $v=e_1+\dots + e_k$ for some collection of distinct orthogonal unit vectors $e_1, \dots, e_k\in \Z^n$.
\end{rem}

\begin{defn}\label{def:standard}
An embedding $\phi: V(\Gamma) \rightarrow \Z^n$ is \emph{standard} if 
\[
|\supp(\phi(u))\cap \supp(\phi(v))| = \begin{cases}
w(v) &\text{if $u=v$}\\
1 & \text{if $u$ and $v$ are adjacent}\\
0&\text{otherwise}
\end{cases}
\] 
for all vertices $u$ and $v$ in $V(\Gamma)$.
\end{defn}

\begin{rem}\label{rem:standard_embeddings} We make the following observations regarding standard embeddings:
\begin{enumerate}[label=(\alph*),font=\upshape]
\item\label{it:standard_bound} The plumbing graph $\Gamma$ always admits standard embeddings in $\Z^n$ provided $n$ is sufficiently large. Explicitly one can calculate that a standard embedding requires $-E + \sum_{v\in V(\Gamma)} w(v)$ distinct orthogonal unit vectors, where $E$ is the number of edges in $\Gamma$. Thus a standard embedding exists if and only if $$n\geq -E + \sum_{v\in V(\Gamma)} w(v).$$
\item If $\phi$ is a standard embedding of $\Gamma$ and $\alpha \in \aut(\Z^n)$ is an automorphism, then $\alpha\circ \phi$ is also standard. Conversely any pair of standard embeddings in $\Z^n$ are related by an automorphism. Thus a plumbing graph $\Gamma$ is rigid if and only if every embedding of $\Gamma$ is standard.
\end{enumerate}

\end{rem}

Next we define a relative version of rigidity.
\begin{defn}
We say that a subgraph $\Gamma'$ of a plumbing graph $\Gamma$ is {\em rigid} if for any integer $n\geq 1$ and for any pair of embeddings $\phi, \phi': V(\Gamma)\rightarrow \Z^n$, there is an element $\alpha \in \aut(\Z^n)$ such that $\phi'(v) = \alpha(\phi(v))$ for each $v \in V(\Gamma')$.\end{defn}

For the rest of the article, whenever we are considering a plumbing graph $\Gamma$ together with a lattice embedding $\phi: V(\Gamma) \rightarrow \Z^n$ we will, by abuse of notation,  denote $\phi(v)$ by $v$ for each $v\in V(\Gamma)$.

\begin{lem}\label{lem:2chains_rigid}
If $\Gamma$ is a disjoint union of linear plumbing graphs satisfying the Working Conditions, then the subgraph $\Gamma'$ containing all vertices of weight two is rigid.
\end{lem}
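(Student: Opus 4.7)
Plan:

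My plan is to proceed by induction along each maximal chain of weight-2 vertices, bringing it into a canonical ``standard'' form using automorphisms of $\Z^n$, and then to show that distinct chains must use disjoint sets of unit vectors. The starting observation is that $\norm{v}=2$ together with the general inequality $|\supp(v)|\leq\norm{v}$ forces $v=\epsilon_a e_a+\epsilon_b e_b$ for two distinct unit vectors with signs; a short check then shows that two adjacent weight-2 vertices must share exactly one unit vector, which appears with opposite signs. The target standard form for a chain $v_1,\ldots,v_m$ is $v_i=f_{i-1}-f_i$ with $f_0,\ldots,f_m$ pairwise distinct unit vectors.

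The heart of the argument is the inductive step. Assuming $v_1,\ldots,v_{i-1}$ have already been placed in standard form, the orthogonality constraints $v_i\cdot v_j=0$ for $j\leq i-2$ together with $v_i\cdot v_{i-1}=-1$ propagate through the $f$-basis to force the coefficients of $v_i$ at $f_0,\ldots,f_{i-1}$ into the shape $a_0=\cdots=a_{i-2}=a$ and $a_{i-1}=a+1$, leaving $a$ as the only integer parameter. Combining with $\norm{v_i}=2$, the integer solutions are exactly the standard option $v_i=-f_{i-1}+f_i$ with $f_i$ a fresh unit vector (arising from $a=0$) and, only when $i=3$, the single non-standard option $v_3=-f_0-f_1$ (arising from $a=-1$). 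To rule out this non-standard option I would apply the same linear-system argument twice more: first to show that a non-standard $v_3$ prevents any $v_4$ continuing the chain (the analogous system at step four forces a coefficient equal to $-1/2$), and then to show that it prevents any non-weight-2 vertex of $\Gamma$ from being adjacent to $v_1$, $v_2$, or $v_3$ (the same half-integer obstruction). Hence a non-standard $v_3$ would force $\{v_1,v_2,v_3\}$ to be an isolated connected component of $\Gamma$ consisting of exactly three weight-2 vertices with adjusted weights $1,0,1$, which is precisely the configuration excluded by Working Condition~\ref{it:long_chain} with $k=1$ and $w'=1$ at both ends (the condition demands $k\geq 2$).

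A variant of the same calculation handles cross-chain interactions: if a vertex $v'$ of one chain $C'$ had support meeting the unit vectors used by another chain $C$, the orthogonality relations with every vertex of $C$ propagate to force all coefficients of $v'$ along $C$'s unit vectors to be equal, after which the norm equation rules out any nonzero value unless $C$ consists of a single isolated weight-2 vertex and $v'$ shares both of its unit vectors; but that residual case requires two vertices of adjusted weight greater than one in $\Gamma$, contradicting Working Condition~\ref{it:largeweight}. Once every chain is standardly embedded on a disjoint set of unit vectors, any two embeddings $\phi,\phi'$ of $\Gamma$ restrict to embeddings of $\Gamma'$ that agree up to a signed permutation of unit vectors, i.e., up to an element of $\aut(\Z^n)$. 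The main obstacle is the bookkeeping: although the underlying computation at each step is short, one has to verify that every potentially non-standard configuration is closed off by the correct Working Condition, and short chains of length two or three bounded by non-weight-2 vertices are where the interplay between the Working Conditions is the most delicate.
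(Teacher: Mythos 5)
Your within-chain induction is essentially sound and runs parallel to the paper's computation: writing $v_i$ in terms of the unit vectors of the already-standardized part, orthogonality forces the coefficients into the shape $a,\dots,a,a+1$, and the norm equation leaves only the standard continuation except for the single exceptional solution $v_3=-f_0-f_1$ at the third step; your half-integer obstruction then correctly shows this exceptional $v_3$ forbids both a fourth chain vertex and any outside neighbour, so $\{v_1,v_2,v_3\}$ would be a whole component with adjusted weights $1,0,1$, which Condition~\ref{it:long_chain} excludes. (A small imprecision: at step $i=2$ the value $a=-1$ also gives an integer solution, $v_2=-f_0+g$; it is standard up to an automorphism fixing $v_1$, so nothing is lost, but the dichotomy is not literally as you state it.)

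The genuine gap is in the cross-chain step. From orthogonality with all of $C=\{u_1,\dots,u_m\}$ and the norm equation you correctly conclude that an overlap forces $m=1$ and $v'=\pm(g_0+g_1)$ where $u_1=g_0-g_1$; but nothing in that computation makes $u_1$ or $v'$ isolated, and the residual case does \emph{not} require two vertices of adjusted weight greater than one. Concretely, $u_1=e_1-e_2$, $z=-e_1+e_3+e_4$, $v'=e_1+e_2$ is a genuine lattice embedding of the linear graph with weights $2,3,2$: the two weight-two vertices are nonadjacent, have equal support, and each has adjusted weight $1$, so Condition~\ref{it:largeweight} is not violated and your appeal to it fails. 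What actually closes this case is an argument you omit: since $u_1+v'=2g_0$, every vertex pairs with $u_1$ and $v'$ with the same parity, hence has $u_1$ and $v'$ as neighbours simultaneously or not at all; because $\Gamma$ is a union of linear graphs with no cycles, this forces either a three-vertex component with weights $2$, $n$, $2$ (adjusted weights $1$, $n-2$, $1$), which Condition~\ref{it:3adjacent} forbids for $n\ge 3$ (the case $n=2$ being your within-chain case, killed by Condition~\ref{it:long_chain}), or two isolated weight-two vertices, which is where Condition~\ref{it:largeweight} finally applies. Without the identical-neighbour-set step and the appeal to Condition~\ref{it:3adjacent}, the disjointness of supports of distinct chains is not established as written; this parity argument plus Conditions~\ref{it:3adjacent} and \ref{it:long_chain} is precisely how the paper's proof handles the shared-support case.
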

\begin{proof}
Let $\phi : V(\Gamma) \rightarrow \Z^n$ be an embedding. First, note that for any vertex $v$ of weight two we have $|\supp(v)|=2$, which is to say $v$ can be written in the form $v=e_1+e_2$ for two distinct orthogonal unit vectors in $\Z^n$. Secondly, note that if $v_1$ and $v_2$ are two adjacent vertices of weight two, we necessarily have that  $|\supp(v_1)\cap\supp(v_2)|=1$.  Thus it remains to show that if $v_1$ and $v_2$ are two nonadjacent vertices of weight two, then $\supp(v_1)$ and $\supp(v_2)$ must be disjoint. If these supports were not disjoint, then we would have that $\supp(v_1)=\supp(v_2)$ and up to automorphisms of $\Z^n$ we could write $v_1$ in the form $v_1= e_1-e_2$ and $v_2=e_1 +e_2$. However, in this case we have that $v \cdot v_1 \equiv v \cdot v_2 \bmod{2}$ for any vertex $v$, so every vertex which is adjacent to $v_1$ must also be adjacent to $v_2$. Since $\Gamma$ does not contain any cycles and $\Gamma$ contains at most one vertex with adjusted weight $w'(v)>1$, we conclude that $v_1$ and $v_2$ lie in a connected component of $\Gamma$ taking the form $$\begin{tikzpicture}[xscale=1.0,yscale=1,baseline={(0,0)}]
    \node at (1, .4) {$2$};
    \node at (2, .4) {$n$};
    \node at (3, .4) {$2$};
    \node (A1_1) at (1, 0) {$\bullet$};
    \node (A1_2) at (2, 0) {$\bullet$};
    \node (A1_3) at (3, 0) {$\bullet$};
    \path (A1_1) edge [-] node [auto] {$\scriptstyle{}$} (A1_2);
        \path (A1_2) edge [-] node [auto] {$\scriptstyle{}$} (A1_3);
  \end{tikzpicture}$$ for some $n\geq 2$. Such a component has adjusted weights $$\begin{tikzpicture}[xscale=1.0,yscale=1,baseline={(0,0)}]
    \node at (1.0, .4) {$1$};
    \node at (2.0, .4) {$n-2$};
    \node at (3.0, .4) {$1$};
    \node (A1_1) at (1, 0) {$\circ$};
    \node (A1_2) at (2, 0) {$\circ$};
    \node (A1_3) at (3, 0) {$\circ$};
    \path (A1_1) edge [-] node [auto] {$\scriptstyle{}$} (A1_2);
        \path (A1_2) edge [-] node [auto] {$\scriptstyle{}$} (A1_3);
  \end{tikzpicture}$$ 
 However, such a configuration is forbidden by the Working Conditions: Condition~\ref{it:3adjacent} rules out the case $n>2$ and Condition~\ref{it:long_chain} rules out the case $n=2$. Thus we have that $\supp(v_1)$ and $\supp(v_2)$ are disjoint, as required. 
\end{proof}

\begin{lem}\label{lem:0-chain_rigid}
If $\Gamma$ is a disjoint union of linear plumbing graphs satisfying the Working Conditions, then any subgraph of $\Gamma$ with adjusted weights

$$\begin{tikzpicture}[xscale=1.0,yscale=1,baseline={(0,0)}]
    \node at (1.0, .4) {$1$};
    \node at (2.0, .4) {$0$};
    \node at (4.0, .4) {$0$};
	\node at (5.0, .4) {$1$};
    \node (A1_1) at (1, 0) {$\circ$};
    \node (A1_2) at (2, 0) {$\circ$};
    \node (A1_3) at (3, 0) {$\cdots$};
        \node (A1_4) at (4, 0) {$\circ$};
            \node (A1_5) at (5, 0) {$\circ$};
    \path (A1_1) edge [-] node [auto] {$\scriptstyle{}$} (A1_2);
        \path (A1_2) edge [-] node [auto] {$\scriptstyle{}$} (A1_3);
                \path (A1_3) edge [-] node [auto] {$\scriptstyle{}$} (A1_4);
                        \path (A1_4) edge [-] node [auto] {$\scriptstyle{}$} (A1_5);
  \end{tikzpicture}$$
and containing at least three vertices is rigid.
\end{lem}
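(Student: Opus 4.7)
The plan is to proceed by cases on $k$, the number of vertices in the chain. For $k=3$, the chain has adjusted weights $(1,0,1)$: identifying it with $v_0,v_1,v_2$ in Working Condition~\ref{it:long_chain} (with $w'(v_0)=1$ and a single interior zero), the condition requires $k\geq 2$, so the $k=3$ case is vacuous. For $k=4$, the statement is exactly Lemma~\ref{lem:322rigidk=2}. So the real content lies in the case $k\geq 5$.

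For $k\geq 5$, I would first apply Lemma~\ref{lem:2chains_rigid} to place the interior weight-$2$ vertices $v_2,\dots,v_{k-1}$ in standard form $v_i=e_i-e_{i-1}$, where $e_1,\dots,e_{k-1}$ are distinct orthogonal unit vectors of $\Z^n$. The orthogonality relations $v_1\cdot v_j=0$ for $3\leq j\leq k-1$ force $v_1\cdot e_2=\cdots=v_1\cdot e_{k-1}=a$ for some integer $a$, and $v_1\cdot v_2=-1$ gives $v_1\cdot e_1=a+1$. Writing $v_1=(a+1)e_1+a(e_2+\cdots+e_{k-1})+u$ with $u\perp e_1,\dots,e_{k-1}$, the length bound $|v_1|^2=w(v_1)\leq 3$ becomes
\[(a+1)^2+(k-2)a^2+|u|^2\leq 3.\]
For $k\geq 6$ this immediately forces $a=0$; for $k=5$ the only other possibility is the borderline case $a=-1$, $u=0$, yielding $v_1=-e_2-e_3-e_4$ with $w(v_1)=3$.

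The main obstacle is excluding this borderline case. Since $w(v_1)=3$, the vertex $v_1$ has a further neighbor $v_0$ outside the chain, and the same orthogonality argument applied to $v_0$ forces $v_0\cdot e_j=b$ to be constant for $j=1,\dots,4$, whence $v_0\cdot v_1=-3b$, which cannot equal $-1$ for any integer $b$: contradiction. A symmetric analysis gives $v_k=-e_{k-1}+u'$ with $u'\perp e_1,\dots,e_{k-1}$ and $u\cdot u'=0$. Residual non-standard configurations in which $u$ and $u'$ share support, for instance $u=f_1+f_2$ and $u'=f_1-f_2$, are ruled out by the same style of integer-divisibility argument applied to the outside neighbors $v_0$ and $v_{k+1}$. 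After this, an automorphism of $\Z^n$ fixing $e_1,\dots,e_{k-1}$ places $u$ and $u'$ in standard form, yielding a standard embedding of the full chain.
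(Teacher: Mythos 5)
Your proposal is correct and follows essentially the same route as the paper: standardize the weight-two vertices via Lemma~\ref{lem:2chains_rigid}, handle $k=3$ by Condition~\ref{it:long_chain} and $k=4$ by Lemma~\ref{lem:322rigidk=2}, bound the coefficient $a$ by a norm computation, kill the borderline $k=5$ case by a pairing contradiction, and rule out shared support of the two endpoint vectors by a parity/divisibility argument. The only (harmless) differences are organizational: you treat both endpoints symmetrically by standardizing only the interior of the chain instead of splitting into leaf cases as the paper does, and you exclude the borderline case using the external neighbor $v_0$ (via $v_0\cdot v_1=-3b\neq -1$) rather than the opposite chain endpoint as in the paper's argument.
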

\begin{proof}\setcounter{case}{0}
Suppose that we have a chain of vertices $v_1, \dots, v_k$ of $\Gamma$ with 
\[w'(v_1)=w'(v_k)=1 \qquad \text{ and } \qquad w'(v_2)=\dots = w'(v_{k-1})=0.\]
By hypothesis, we are assuming that $k\geq 3$. Condition~\ref{it:long_chain} implies that $k\geq 4$. We break down the proof into four cases.

\begin{case}
Both $v_1$ and $v_k$ are leaves in $\Gamma$.
\end{case}

If both $v_1$ and $v_k$ are leaves, then $v_1, \dots, v_k$ are the vertices of a connected component of $\Gamma$ of the form
$$\begin{tikzpicture}[xscale=1.0,yscale=1,baseline={(0,0)}]
    \node at (1, .4) {$2$};
    \node at (2, .4) {$2$};
    \node at (4, .4) {$2$};
	\node at (5, .4) {$2$};
    \node (A1_1) at (1, 0) {$\bullet$};
    \node (A1_2) at (2, 0) {$\bullet$};
    \node (A1_3) at (3, 0) {$\cdots$};
        \node (A1_4) at (4, 0) {$\bullet$};
            \node (A1_5) at (5, 0) {$\bullet$};
    \path (A1_1) edge [-] node [auto] {$\scriptstyle{}$} (A1_2);
        \path (A1_2) edge [-] node [auto] {$\scriptstyle{}$} (A1_3);
                \path (A1_3) edge [-] node [auto] {$\scriptstyle{}$} (A1_4);
                        \path (A1_4) edge [-] node [auto] {$\scriptstyle{}$} (A1_5);
  \end{tikzpicture}$$
This is rigid by Lemma~\ref{lem:2chains_rigid}.

\begin{case}
Exactly one of $v_1$ and $v_k$ is a leaf in $\Gamma$.
\end{case}
Without loss of generality, we may assume that $v_1$ is the leaf. Thus $v_k$ has degree 2 and hence $w(v_k)=3$. By Lemma~\ref{lem:2chains_rigid}, we can assume that for $i=1, \dots, k-1$ we have $v_i=e_{i}-e_{i-1}$, where $e_0, \dots, e_{k-1}$ are distinct orthogonal unit vectors in $\Z^n$. Thus in order for $v_k$ to have the correct pairings with the other $v_i$ we see that its embedding must take the form
\[
v_k=a(e_0 + \dots +e_{k-1})-e_{k-1}+ v',
\]
where $a$ is an integer and $v'$ is some vector satisfying
$v'\cdot e_0= \dots = v'\cdot e_{k-1}=0$. If $a=0$, then this is a standard embedding since $v_k$ takes the form $v_k=-e_{k-1}+v'$ and so we $|\supp(v_k)\cap \supp(v_{k-1})|=1$ and $\supp(v_k)\cap \supp(v_{i})=\varnothing$ for $i=1, \dots, k-2$.
Thus we can assume that $a\neq 0$. Computing the norm of $v_k$ yields that
\[
w(v_k)=3=(k-1) a^2+(a-1)^2+\norm{v'}\geq (k-1)+(a-1)^2 +\norm{v'}.
\]
Since $k\geq 4$, this is possible only if $k=4$, $a=1$ and $v'=0$. That is, $v_4=v_k$ is embedded in the form $v_4=e_0+e_1+e_2$. Since $v_4$ is not a leaf, there exists a vertex $v_5$ satisfying
\[\text{$v_5\cdot v_4=-1$ and $v_5\cdot v_1 =v_5\cdot v_2= v_5\cdot v_3=0$}.\]
The pairings of $v_5$ with $v_1, v_2$ imply that $v_5\cdot e_0=v_5\cdot e_1=v_5\cdot e_2$. Thus we have that
\[
v_4\cdot v_5=v_5\cdot(e_0+ e_1+e_2)=3v_5\cdot e_0 \not\equiv -1 \bmod 3.
\]
This is a contradiction showing that no such $v_5$ exists and, consequently, ruling out the possibility that $a\neq 0$.
\begin{case}
Neither $v_1$ nor $v_k$ is a leaf and $k\geq 5$.
\end{case}
In this case, $w(v_1)=w(v_k)=3$. By Lemma~\ref{lem:2chains_rigid}, we can assume that we have $v_i=e_{i}-e_{i-1}$ for $i=2, \dots, k-1$, where $e_1, \dots, e_{k-1}$ are distinct orthogonal unit vectors in $\Z^n$. In order to have the correct pairings, we see that $v_1$ and $v_k$ must take the forms 
\[\text{$v_1=a(e_1 + \dots +e_{k-1})+e_{1} + v'$ and $v_k=b(e_1 + \dots +e_{k-1})-e_{k-1} + v''$,}\]
where $a$ and $b$ are integers and $v'$ and $v''$ are vectors satisfying 
\[\text{$v'\cdot e_1= \dots = v'\cdot e_{k-1}=0$ and $v''\cdot e_1= \dots = v''\cdot e_{k-1}=0$.}\]

First we establish that $a=b=0$. Suppose that one of $a$ or $b$ is nonzero. Without loss of generality, suppose that $a\neq 0$. Computing the norm of $v_1$ gives
\[
3=w(v_1) =(a+1)^2 + (k-2) a^2 + \norm{v'}\geq (a+1)^2 + (k-2) a^2.
\]
Since $k\geq 5$, the assumption that $a\neq 0$ implies that $v'=0$, $a=-1$ and $k=5$. This puts $v_1$ in the form $v_1=-e_2-e_3-e_4$. However for such a $v_1$, we have $v_1\cdot v_k =-3b+1$. This contradicts the fact that $v_1\cdot v_k=0$. Thus we are forced to conclude that $a=b=0$.

Thus we have $v_1$ in the form $v_1=e_1+v'$ and $v_k$ in the form $v_k=-e_{k-1}+v''$. Thus to show rigidity it remains to verify that $\supp(v_1)\cap \supp(v_k)=\varnothing$.

In particular, it suffices to assume that $\supp(v')\cap \supp(v'')\neq\varnothing$. Both $v'$ and $v''$ are vectors of norm two and they must be orthogonal to each other in order to ensure that $v_1\cdot v_k=0$. Thus we can assume for a contradiction that $v'=e_{k}+e_{k+1}$ and $v''=e_{k}-e_{k+1}$. Hence $v_1$ and $v_k$ take the forms $v_1=e_1+ e_{k}+e_{k+1}$ and $v_k=-e_{k-1}+e_{k}-e_{k+1}$. Now let $u$ be any other vertex. This has zero pairing with $v_2, \dots v_{k-1}$ so it must take the form
\[
u=c(e_1+ \dots + e_{k-1})+u'
\]
for some integer $c$ and $u'$ satisfying $u'\cdot e_1= \dots = u'\cdot e_{k-1}=0$. However, for such $u$ we have $u\cdot v_1 \equiv u\cdot v_k \bmod{2}$. This shows that there is no vertex adjacent to $v_1$ but not to $v_k$. This is a contradiction allowing us to conclude that the subgraph given by $v_1,\dots, v_k$ is rigid.

\begin{case}
Neither $v_1$ nor $v_k$ are leaves in $\Gamma$ and $k=4$.
\end{case}
This is the final and most elaborate case and we reduce it to a computer calculation. We will show that $v_1,v_2,v_3, v_4$ arises in one of the following configurations, where the weights of $v_1, v_2,v_3,v_4$ are circled:
\begin{enumerate}
\item\label{item:computer1}
$\begin{tikzpicture}[xscale=1.0,yscale=1,baseline={(0,0)}]
    \node at (1, .4) {$2$};
    \node at (2, .4) {$\red{\circled{3}}$};
    \node at (3, .4) {$\red{\circled{2}}$};
    \node at (4, .4) {$\red{\circled{2}}$};
        \node at (5, .4) {$\red{\circled{3}}$};
                \node at (6, .4) {$2$};
    \node (A1_1) at (1, 0) {$\bullet$};
    \node (A1_2) at (2, 0) {$\bullet$};
    \node (A1_3) at (3, 0) {$\bullet$};
    \node (A1_4) at (4, 0) {$\bullet$};
        \node (A1_5) at (5, 0) {$\bullet$};
            \node (A1_6) at (6, 0) {$\bullet$};
    \path (A1_2) edge [-] node [auto] {$\scriptstyle{}$} (A1_3);
    \path (A1_3) edge [-] node [auto] {$\scriptstyle{}$} (A1_4);
    \path (A1_1) edge [-] node [auto] {$\scriptstyle{}$} (A1_2);
        \path (A1_4) edge [-] node [auto] {$\scriptstyle{}$} (A1_5);
            \path (A1_5) edge [-] node [auto] {$\scriptstyle{}$} (A1_6);
  \end{tikzpicture}$

\item\label{item:computer2}
$\begin{tikzpicture}[xscale=1.0,yscale=1,baseline={(0,0)}]
    \node at (1, .4) {$2$};
    \node at (2, .4) {$\red{\circled{3}}$};
    \node at (3, .4) {$\red{\circled{2}}$};
    \node at (4, .4) {$\red{\circled{2}}$};
        \node at (5, .4) {$\red{\circled{3}}$};
\node at (6, .4) {$3$};
\node at (7, .4) {$2$};
                                
    \node (A1_1) at (1, 0) {$\bullet$};
    \node (A1_2) at (2, 0) {$\bullet$};
    \node (A1_3) at (3, 0) {$\bullet$};
    \node (A1_4) at (4, 0) {$\bullet$};
        \node (A1_5) at (5, 0) {$\bullet$};
            \node (A1_6) at (6, 0) {$\bullet$};
\node (A1_7) at (7, 0) {$\bullet$};

    \path (A1_2) edge [-] node [auto] {$\scriptstyle{}$} (A1_3);
    \path (A1_3) edge [-] node [auto] {$\scriptstyle{}$} (A1_4);
    \path (A1_1) edge [-] node [auto] {$\scriptstyle{}$} (A1_2);
        \path (A1_4) edge [-] node [auto] {$\scriptstyle{}$} (A1_5);
\path (A1_5) edge [-] node [auto] {$\scriptstyle{}$} (A1_6);
\path (A1_6) edge [-] node [auto] {$\scriptstyle{}$} (A1_7);
  \end{tikzpicture}$
  
\item\label{item:computer3} $\begin{tikzpicture}[xscale=1.0,yscale=1,baseline={(0,0)}]
    \node at (1, .4) {$2$};
    \node at (2, .4) {$2$};
    \node at (3, .4) {$\red{\circled{3}}$};
    \node at (4, .4) {$\red{\circled{2}}$};
        \node at (5, .4) {$\red{\circled{2}}$};
\node at (6, .4) {$\red{\circled{3}}$};
\node at (7, .4) {$3$};
                                
    \node (A1_1) at (1, 0) {$\bullet$};
    \node (A1_2) at (2, 0) {$\bullet$};
    \node (A1_3) at (3, 0) {$\bullet$};
    \node (A1_4) at (4, 0) {$\bullet$};
        \node (A1_5) at (5, 0) {$\bullet$};
            \node (A1_6) at (6, 0) {$\bullet$};
\node (A1_7) at (7, 0) {$\bullet$};

    \path (A1_2) edge [-] node [auto] {$\scriptstyle{}$} (A1_3);
    \path (A1_3) edge [-] node [auto] {$\scriptstyle{}$} (A1_4);
    \path (A1_1) edge [-] node [auto] {$\scriptstyle{}$} (A1_2);
        \path (A1_4) edge [-] node [auto] {$\scriptstyle{}$} (A1_5);
\path (A1_5) edge [-] node [auto] {$\scriptstyle{}$} (A1_6);
\path (A1_6) edge [-] node [auto] {$\scriptstyle{}$} (A1_7);
  \end{tikzpicture}$

\item\label{item:computer4} $\begin{tikzpicture}[xscale=1.0,yscale=1,baseline={(0,0)}]
    \node at (1, .4) {$2$};
    \node at (2, .4) {$2$};
    \node at (3, .4) {$\red{\circled{3}}$};
    \node at (4, .4) {$\red{\circled{2}}$};
        \node at (5, .4) {$\red{\circled{2}}$};
\node at (6, .4) {$\red{\circled{3}}$};
\node at (7, .4) {$4$};
\node at (8, .4) {$2$};
                                
    \node (A1_1) at (1, 0) {$\bullet$};
    \node (A1_2) at (2, 0) {$\bullet$};
    \node (A1_3) at (3, 0) {$\bullet$};
    \node (A1_4) at (4, 0) {$\bullet$};
        \node (A1_5) at (5, 0) {$\bullet$};
            \node (A1_6) at (6, 0) {$\bullet$};
\node (A1_7) at (7, 0) {$\bullet$};
\node (A1_8) at (8, 0) {$\bullet$};

    \path (A1_2) edge [-] node [auto] {$\scriptstyle{}$} (A1_3);
    \path (A1_3) edge [-] node [auto] {$\scriptstyle{}$} (A1_4);
    \path (A1_1) edge [-] node [auto] {$\scriptstyle{}$} (A1_2);
        \path (A1_4) edge [-] node [auto] {$\scriptstyle{}$} (A1_5);
\path (A1_5) edge [-] node [auto] {$\scriptstyle{}$} (A1_6);
\path (A1_6) edge [-] node [auto] {$\scriptstyle{}$} (A1_7);
\path (A1_7) edge [-] node [auto] {$\scriptstyle{}$} (A1_8);
  \end{tikzpicture}$

\item\label{item:computer5}$\begin{tikzpicture}[xscale=1.0,yscale=1,baseline={(0,0)}]
    \node at (1, .4) {$2$};
    \node at (2, .4) {$3$};
    \node at (3, .4) {$\red{\circled{3}}$};
    \node at (4, .4) {$\red{\circled{2}}$};
        \node at (5, .4) {$\red{\circled{2}}$};
\node at (6, .4) {$\red{\circled{3}}$};
\node at (7, .4) {$3$};
\node at (8, .4) {$2$};
                                
    \node (A1_1) at (1, 0) {$\bullet$};
    \node (A1_2) at (2, 0) {$\bullet$};
    \node (A1_3) at (3, 0) {$\bullet$};
    \node (A1_4) at (4, 0) {$\bullet$};
        \node (A1_5) at (5, 0) {$\bullet$};
            \node (A1_6) at (6, 0) {$\bullet$};
\node (A1_7) at (7, 0) {$\bullet$};
\node (A1_8) at (8, 0) {$\bullet$};

    \path (A1_2) edge [-] node [auto] {$\scriptstyle{}$} (A1_3);
    \path (A1_3) edge [-] node [auto] {$\scriptstyle{}$} (A1_4);
    \path (A1_1) edge [-] node [auto] {$\scriptstyle{}$} (A1_2);
        \path (A1_4) edge [-] node [auto] {$\scriptstyle{}$} (A1_5);
\path (A1_5) edge [-] node [auto] {$\scriptstyle{}$} (A1_6);
\path (A1_6) edge [-] node [auto] {$\scriptstyle{}$} (A1_7);
\path (A1_7) edge [-] node [auto] {$\scriptstyle{}$} (A1_8);
  \end{tikzpicture}$
\end{enumerate}
Since $v_1$ and $v_4$ are both of degree two we see that the plumbing graph locally takes the form
\[
\begin{tikzpicture}[xscale=1.0,yscale=1,baseline={(0,0)}]
    \node at (1, .4) {$a$};
    \node at (2, .4) {$\red{\circled{3}}$};
    \node at (3, .4) {$\red{\circled{2}}$};
    \node at (4, .4) {$\red{\circled{2}}$};
        \node at (5, .4) {$\red{\circled{3}}$};
                \node at (6, .4) {$b$};
    \node (A1_1) at (1, 0) {$\bullet$};
    \node (A1_2) at (2, 0) {$\bullet$};
    \node (A1_3) at (3, 0) {$\bullet$};
    \node (A1_4) at (4, 0) {$\bullet$};
        \node (A1_5) at (5, 0) {$\bullet$};
            \node (A1_6) at (6, 0) {$\bullet$};
    \path (A1_2) edge [-] node [auto] {$\scriptstyle{}$} (A1_3);
    \path (A1_3) edge [-] node [auto] {$\scriptstyle{}$} (A1_4);
    \path (A1_1) edge [-] node [auto] {$\scriptstyle{}$} (A1_2);
        \path (A1_4) edge [-] node [auto] {$\scriptstyle{}$} (A1_5);
            \path (A1_5) edge [-] node [auto] {$\scriptstyle{}$} (A1_6);
  \end{tikzpicture}
\] 
for some integers $a$ and $b$ satisfying $a, b \geq 2$. In terms of adjusted weights this is
$$\begin{tikzpicture}[xscale=1.0,yscale=1,baseline={(0,0)}]
    \node at (-2.0+0.04, .4) {$a'$};
    \node at (-1.0+0.0, .4) {\red{$\circled{1}$}};
    \node at (0.0, .4) {\red{$\circled{0}$}};
    \node at (1.0, .4) {\red{$\circled{0}$}};
    \node at (2.0, .4) {\red{$\circled{1}$}};
    \node at (3.0+0.04, .4) {$b'$};
        \node (A1_8) at (-2, 0) {$\circ$};
        \node (A1_9) at (-1, 0) {$\circ$};
    \node (A1_0) at (0, 0) {$\circ$};
    \node (A1_1) at (1, 0) {$\circ$};
    \node (A1_2) at (2, 0) {$\circ$};
    \node (A1_3) at (3, 0) {$\circ$};
            \path (A1_8) edge [-] node [auto] {$\scriptstyle{}$} (A1_9);
        \path (A1_9) edge [-] node [auto] {$\scriptstyle{}$} (A1_0);
    \path (A1_0) edge [-] node [auto] {$\scriptstyle{}$} (A1_1);
    \path (A1_1) edge [-] node [auto] {$\scriptstyle{}$} (A1_2);
        \path (A1_2) edge [-] node [auto] {$\scriptstyle{}$} (A1_3);
  \end{tikzpicture}$$
By symmetry, we can assume that $a'\leq b'$. Condition~\ref{it:largeweight} implies that $b'\leq 3$. However the forbidden configuration  from Condition~\ref{it:forbidden_configs} \ref{it:31001} rules out the possibility that $b'=3$. Thus we see that $b'\leq 2$ and furthermore that $a' \leq 1$ if $b'=2$ by Condition~\ref{it:largeweight}. This gives us the following possibilities for $a'$ and $b'$:  $(a',b')\in \{(0,0), (0,1), (0,2), (1,1),(1,2)\}$. We consider each in turn:
\begin{itemize}
\item If $a'=b'=0$, then we necessarily have $a=b=2$, which is case~\eqref{item:computer1}.
\item If $a'=0$ and $b'=1$, then we have that $a=2$ and $b\in\{2,3\}$. If $a=b=2$, then we are in case~\eqref{item:computer1} again. If $a=2$ and $b=3$, then we are in case~\eqref{item:computer2} since the vertex of weight $b=3$ and adjusted weight $b'=1$ must be adjacent to a vertex of adjusted weight zero by Condition~\ref{it:3adjacent}.

\item If $a'=0$ and $b'=2$, then we have that $a=2$ and $b\in\{3,4\}$. In either case, by Condition~\ref{it:long_chain} the vertex of weight $a=2$ is adjacent to a vertex of adjusted weight zero and hence of weight two. If $b=3$, then we must be in case~\eqref{item:computer3}. If $b=4$, then we must be in case~\eqref{item:computer4}, since the vertex of weight $b=4$ and adjusted weight $b'=2$ must be adjacent to a vertex of adjusted weight zero by Condition~\ref{it:3adjacent}.

\item If $a'=b'=1$, then we can assume by symmetry that $a\leq b$. If $a=b=2$, then we have case~\eqref{item:computer1}. If $a=2$ and $b=3$, then we have case~\eqref{item:computer2}, since the vertex with weight $b=3$ must be adjacent to a vertex of adjusted weight zero by Condition~\ref{it:3adjacent}.  If $a=3$ and $b=3$, then we have case~\eqref{item:computer5} since both vertices of weights $a$ and $b$ must be adjacent to vertices with adjusted weight zero by Condition~\ref{it:3adjacent}.
\item The possibility that $a'=1$ and $b'=2$ cannot occur, since this would yield the forbidden configuration \ref{it:110012} from Condition~\ref{it:forbidden_configs}.
\end{itemize}

In order to complete the proof of the lemma, we used the GAP computer algebra system \cite{GAP} to enumerate all possible embeddings of the five configurations \eqref{item:computer1}--\eqref{item:computer5}. One finds that there are 14 embeddings to consider and in each of these embeddings the subgraph given by $v_1, v_2, v_3, v_4$ has a standard embedding. In particular, this implies that the subgraph given by $v_1, v_2, v_3, v_4$ is rigid, since it implies that no nonstandard embedding of these vertices can extend to an embedding of $\Gamma$. For further details see Appendix~\ref{sec:computer} and Lemma~\ref{lem:computer1}, in particular.
\end{proof}

\begin{defn}\label{def:chain}
A \emph{chain of twos} in a plumbing graph $\Gamma$ is a maximal connected subgraph $v_1, \dots, v_k$ such that each vertex $v_i$ has weight two and $k\geq 2$. Here maximality means that it is not contained in any larger connected subgraph consisting entirely of vertices of weight two.
\end{defn}

The following lemma shows that there are typically many chains of twos in a plumbing graph satisfying the Working Conditions.
\begin{lem}\label{lem:degreetwochains}
Let $\Gamma$ be a disjoint union of linear plumbing graphs satisfying the Working Conditions and let $v$ be a vertex of $\Gamma$ with degree $d(v)=2$. Then either $v$ is contained in a chain of twos or $v$ is adjacent to a chain of twos.
\end{lem}
\begin{proof}
First observe that for any vertex $u$ with adjusted weight $w'(u)=0$, Condition~\ref{it:long_chain} implies that $u$ is contained in a chain of twos. Thus if $w'(v)=0$, then we conclude immediately that $v$ is contained in a chain of twos. On the other hand, if $w'(v)\geq 1$, then Condition~\ref{it:3adjacent} implies that $v$ is adjacent to a vertex $u$ with adjusted weight $w'(u)=0$. This vertex $u$ is contained in a chain of twos as before.
\end{proof}

The following lemma will play an essential role in the induction procedure.

\begin{lem}\label{lem:min_example_properties}
Let $\Gamma$ be a disjoint union of linear plumbing graphs satisfying the Working Conditions and containing a chain of twos $v_1, \dots , v_k$. If $\Gamma$ admits an embedding with a vertex $u$ such that 
$$|\supp(u) \cap \supp(v_1, \dots, v_k)|\geq 2,$$
then
\begin{enumerate}[label=(\alph*),font=\upshape]
\item\label{it:supp_bound} $|\supp(u) \cap \supp(v_1, \dots, v_k)|\geq 3$,
\item\label{it:u_large_norm} $u$ is the unique vertex in $\Gamma$ with $w'(u)>1$, and
\item\label{it:min_candidate} $\Gamma$ contains at least two chains of twos.

\end{enumerate}
\end{lem}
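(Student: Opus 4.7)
The plan is to use Lemma~\ref{lem:2chains_rigid} to put the chain of twos in standard form, expand $u$ in the resulting frame, and then exploit the Working Conditions to rule out configurations incompatible with each conclusion. After applying an automorphism of $\Z^n$ I may assume $v_j = e_j - e_{j-1}$ for $j=1,\ldots,k$, with $e_0,\ldots,e_k$ distinct orthogonal unit vectors. Writing $u = \sum_{i=0}^{k} a_i e_i + u'$ with $u' \perp e_0,\ldots,e_k$, the pairings $u\cdot v_j = a_j - a_{j-1}$ together with the linearity of $\Gamma$ (which allows $u$ to be adjacent to at most one of $v_1, v_k$) leave two possibilities up to symmetry: Case~(I), $u$ is not adjacent to the chain and $u = a(e_0+\cdots+e_k)+u'$; and Case~(II), $u$ is adjacent to $v_1$ and $u = (a+1)e_0 + a(e_1+\cdots+e_k) + u'$. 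The hypothesis forces $a\ne0$, and counting nonzero $a_i$'s gives $|\supp(u)\cap\supp(v_1,\ldots,v_k)| = k+1$ in every subcase except Case~(II) with $a=-1$, where it equals $k$.

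For \ref{it:supp_bound} it therefore suffices to rule out Case~(II) with $a=-1$ and $k=2$ (together with its symmetric analog). Here $u = -e_1 - e_2 + u'$ is adjacent to $v_1$, so $v_1$ is interior and $w(u)\ge 3$. If $v_2$ is a leaf, Condition~\ref{it:long_chain} applied to $u - v_1 - v_2$ (with adjusted weights $\ge 1, 0, 1$) immediately gives $1 \ge w'(u)+1$, a contradiction. Otherwise $v_2$ has a neighbour $v_2'$ of weight $\ge 3$, and two-sided applications of Condition~\ref{it:long_chain} to $u - v_1 - v_2 - v_2'$ pin down $w(u) = w(v_2') = 3$ and $d(u) = d(v_2') = 2$. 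Passing to the further neighbours $u''$ of $u$ and $v_2''$ of $v_2'$, Conditions~\ref{it:largeweight} and \ref{it:forbidden_configs} together with Remark~\ref{rem:degreetwochains} restrict the admissible profiles $(w'(u''),w'(v_2''))$ to a short list; in each remaining profile an explicit coordinate computation of the successive vertices produces a pairing equation with no integer solution, giving the desired contradiction.

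Conclusions \ref{it:u_large_norm} and \ref{it:min_candidate} follow from parallel arguments. In the cases surviving \ref{it:supp_bound}, $w(u) = |u|^2 \ge k+1 \ge 3$, and since Condition~\ref{it:largeweight} gives $w'(u)\le 3$, the only way $w'(u)$ could fail to exceed $1$ is $w(u)=3$ with $d(u)=2$. A quick inspection shows this forces $u'=0$, with $(k,a)=(2,\pm 1)$ in Case~(I) or $(k,a)=(3,-1)$ in Case~(II). A hypothetical neighbour $p = t(e_0+\cdots+e_k) + r$ of $u$ then satisfies $p\cdot u = \pm(k+1)t = -1$ in Case~(I) or $p\cdot u = -kt = -1$ in Case~(II), neither of which admits an integer solution; so $u$ is forced to be isolated, and an isolated $u$ with $w'(u)=3$ together with $v_1, v_2$ forces either an adjacent pair of adjusted weight $1$ violating Condition~\ref{it:weight3condition} or a second vertex with $w'>1$ violating Condition~\ref{it:largeweight}. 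This establishes \ref{it:u_large_norm}. For \ref{it:min_candidate}, in the surviving cases $u'\ne 0$ and $u$ has at least one neighbour $p$; the embedding equations combined with Condition~\ref{it:3adjacent} force $p$ to be a weight-two, degree-two vertex, so Remark~\ref{rem:degreetwochains} places it in a chain of twos disjoint from $\{v_1,\ldots,v_k\}$.

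The main technical obstacle is the case analysis in \ref{it:supp_bound}: the iterated extension $\cdots - u'' - u - v_1 - v_2 - v_2' - v_2'' - \cdots$ admits several combinatorial profiles compatible with the Working Conditions, and each must be eliminated by separate bookkeeping of the pairing equations in the ambient $\Z^n$.
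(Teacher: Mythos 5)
Your frame—standardizing the chain of twos via Lemma~\ref{lem:2chains_rigid}, expanding $u$ in the resulting coordinates, and splitting according to whether $u$ is adjacent to the chain—is essentially the paper's, and your reduction of \ref{it:supp_bound} to the single case $k=2$, $u$ adjacent to $v_1$, $u=-e_1-e_2+u'$, together with the two-sided application of Condition~\ref{it:long_chain} pinning down $w(u)=w(v_2')=3$ and $d(u)=d(v_2')=2$, is correct. But at that point you are facing exactly a subgraph $u,v_1,v_2,v_2'$ with adjusted weights $1,0,0,1$, and your claim that each remaining ambient profile is killed by ``a pairing equation with no integer solution'' is neither carried out nor true at the level of the next vertex: with $v_1=e_1-e_0$, $v_2=e_2-e_1$, $u=-e_1-e_2+e_3$, the vector $v_2'=e_0+e_1+e_3$ has norm $3$ and all the required pairings, so the nonstandard embedding propagates outward and can only be excluded by a substantially longer analysis of the whole component. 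This is precisely the content of Lemma~\ref{lem:322rigidk=2} (via Lemma~\ref{lem:0-chain_rigid}), which is computer-assisted over five ambient configurations and which you could simply have invoked here; as written, the key case of \ref{it:supp_bound} is a promissory note rather than a proof.

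The arguments for \ref{it:u_large_norm} and \ref{it:min_candidate} contain concrete errors beyond sketchiness. For \ref{it:u_large_norm}, in Case~(I) you parametrize a hypothetical neighbour of $u=\pm(e_0+\cdots+e_k)$ as $t(e_0+\cdots+e_k)+r$, i.e.\ orthogonal to the whole chain; but a neighbour $p$ of $u$ may also be adjacent to the endpoint $v_1$ (the linear graph $u-p-v_1-\cdots-v_k$ is allowed), and then, for example, $u=-(e_0+e_1+e_2)$ and $p=e_0+r$ satisfy every local pairing equation, so no contradiction appears. The paper avoids this by organizing the cases according to whether $v_1,v_k$ are leaves: when they are not, Condition~\ref{it:long_chain} forces $k\geq 3$, or the rigidity of the extended chain $v_0,\dots,v_{k+1}$ (Lemma~\ref{lem:0-chain_rigid}) rules out $u=\pm(e_0+\cdots+e_k)$ because one of the pairings with $v_0,v_{k+1}$ would be $+1$. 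For \ref{it:min_candidate}, both ``$u'\neq 0$'' and ``$u$ has at least one neighbour'' are unjustified—the cases $d(u)\leq 1$ are genuinely nontrivial and in the paper require Condition~\ref{it:forbidden_configs}~\ref{it:31001}, Condition~\ref{it:weight3condition}, and a rigidity check of the seven-vertex graph with weights $2,4,2,2,2,3,2$—and Condition~\ref{it:3adjacent} does not force $u$'s other neighbour to have weight two, since any triple containing $v_1$ (adjusted weight $0$) never violates it; indeed in the paper the second chain of twos is in some cases found next to $v_0$ or $v_{k+1}$, not next to $u$. So as written, \ref{it:u_large_norm} and \ref{it:min_candidate} fail on exactly the configurations the paper's leaf-by-leaf case analysis and rigidity lemmas were designed to handle.
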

\begin{proof}
By Lemma~\ref{lem:2chains_rigid}, we can suppose that $v_1, \dots, v_k$ embed as $v_i=e_i-e_{i-1}$, where $e_0, \dots, e_{k}$ are distinct orthogonal unit vectors in $\Z^n$. We subdivide the analysis into several cases.
\setcounter{case}{0}
\begin{case}
Both $v_1$ and $v_k$ are leaves.
\end{case}
Since $v_1$ and $v_k$ are leaves, the vertices $v_1, \dots, v_k$ form a connected component of $\Gamma$ and $u$ is not adjacent to either $v_1$ or $v_k$. By Condition~\ref{it:long_chain} we have either $k=2$ or $k\geq  4$. In this case, the vertex $u$ must take the form
\[
u=a(e_0 + \dots + e_k)+u',
\]
where $a$ is a nonzero integer and $u'\cdot e_0= \dots = u'\cdot e_{k}=0$. Since $a$ is nonzero and $k\geq 2$, we evidently have \ref{it:supp_bound} of the lemma in this case. 

Moreover, computing the norm of $u$ shows that $w(u)\geq k+1$ with equality if and only if
$u=\pm (e_0 + \dots + e_k)$. However if $u=\pm (e_0 + \dots + e_k)$, then $u$ must be an isolated vertex, since we cannot find a vector $z\in \Z^n$ with $z\cdot u=-1$ and $z\cdot v_1=\dots = z\cdot v_k=0$. This implies that $w'(u)=w(u)=k+1\geq 3$ in this case. However, by Condition~\ref{it:largeweight} we have have $w'(u)\leq 3$, implying that $w'(u)=3$ and $k=2$. However, this violates Condition~\ref{it:weight3condition}, since if $k=2$, then $v_1$ and $v_2$ form two adjacent vertices with $w'(v_1)=w'(v_2)=1$. Thus we conclude that $w(u)\geq k+2\geq 4$. This implies that $w'(u)\geq 2$, giving \ref{it:u_large_norm} of the lemma.

Now we argue that $u$ must have degree two in $\Gamma$. If $d(u)=2$, then Lemma~\ref{lem:degreetwochains} and the fact that $w(u)>2$ implies that $u$ is adjacent to a chain of twos, implying the existence a chain of twos distinct from $v_1,\dots, v_k$ and verifying \ref{it:min_candidate} of the lemma. If $w(u)\geq 5$, then $d(u)=2$, since $w'(u)=w(u)-d(u)\leq 3$ by Condition~\ref{it:largeweight}. Thus suppose that $w(u)=k+2=4$. As we argued in the previous paragraph using Condition~\ref{it:weight3condition}, the fact that $k=2$ implies that there cannot be a vertex of adjusted weight three and hence $w'(u)=2$ giving $d(u)=2$ as required.

\begin{case}
Exactly one of $v_1$ or $v_k$ is a leaf.
\end{case}
Without loss of generality, suppose that $v_1$ is a leaf and $v_k$ is not a leaf. Thus $v_k$ is adjacent to some vertex $v_{k+1}$. By the maximality condition in Definition~\ref{def:chain}, we have $w(v_{k+1})\geq 3$. This implies that $w'(v_{k+1})\geq 1$. Furthermore, Condition~\ref{it:long_chain} implies that
\begin{equation}\label{eq:vk+1bound}
k-1\geq w'(v_{k+1})+1\geq 2.
\end{equation}
In order to have the correct pairings with $v_1, \dots, v_k$, we see that the vertex $v_{k+1}$ must take the form
\[
v_{k+1}= b(e_0+ \dots + e_k)-e_k +v',
\]
where  $b$ is an integer and $v'\cdot e_0= \dots = v'\cdot e_{k}=0$.

We will argue first that $b=0$, so that the hypothetical vertex $u$ that we are considering is not adjacent to $v_k$. To this end, suppose that $b$ is nonzero. Computing the norm of $v_{k+1}$ yields
$$w(v_{k+1}) =kb^2+(b-1)^2+ \norm{v'} \geq k$$
with equality if and only if $v_{k+1}=e_0+ \dots + e_{k-1}$. Combining this inequality with \eqref{eq:vk+1bound}, we have that
$$k\geq w'(v_{k+1})+2 \geq w(v_{k+1})\geq k.$$
Thus we have $w(v_{k+1})=w'(v_{k+1})+2$ and $v_{k+1}=e_0 +\dots + e_{k-1}$. Since $d(v_{k+1})=2$, there is another vertex adjacent to $v_{k+1}$. However, we cannot find a vertex $z$ that satisfies $z\cdot (e_0 +\dots + e_{k-1})=-1$ and $z\cdot v_i=0$ for $i=1, \dots, k$. It follows that $v_{k+1}$ must take the form $v_{k+1}= -e_k+v' $.

Thus we can assume $u$ must take the form
\[
u=a(e_0 + \dots + e_k)+u',
\]
where $u'\cdot e_0= \dots = u'\cdot e_{k}=0$ and $a$ is a nonzero integer. The fact that $k\geq 3$ establishes \ref{it:supp_bound} of the lemma.

Note that $w(u)\geq k+1$ with equality if and only if $u=e_0 + \dots + e_k$. \textit{A priori}, one also has $w(u)=k+1$ when $u$ takes the form $u=-(e_0 + \dots + e_k)$, but this cannot occur, being incompatible with the requirement that $u\cdot v_{k+1}\in\{0,-1\}$. Hence we have $w'(u) \geq w(u) -2 \geq k-1 \geq 2$, implying \ref{it:u_large_norm} of the lemma. 

Now, we wish to show that $\Gamma$ contains an additional chain of twos disjoint from $v_1,\dots, v_k$. If $d(u)=2$, then as in the previous case we certainly get an additional chain twos adjacent to $u$. Thus we can assume that $u$ has degree $d(u)\leq 1$.  In this case, Condition~\ref{it:largeweight} implies that
$$3 \geq w'(u) \geq w(u) -1 \geq k \geq 3.$$
Thus we have that $w(u)=k+1=4$, $d(u)=1$, and $u$ must take the form $u=e_0 + \dots + e_3$. Moreover, the pairing $u\cdot v_{k+1}=-1$ implies that $u$ must be adjacent to $v_{k+1}$. This implies that $\Gamma$ contains a connected component of the form
$$\begin{tikzpicture}[xscale=1.0,yscale=1,baseline={(0,0)}]
    \node at (-1.0, .4) {$2$};
    \node at (0.0, .4) {$2$};
    \node at (1.0, .4) {$2$};
    \node at (2.0, .4) {$3$};
    \node at (3.0, .4) {$4$};
        \node (A1_9) at (-1, 0) {$\bullet$};
    \node (A1_0) at (0, 0) {$\bullet$};
    \node (A1_1) at (1, 0) {$\bullet$};
    \node (A1_2) at (2, 0) {$\bullet$};
    \node (A1_3) at (3, 0) {$\bullet$};
        \path (A1_9) edge [-] node [auto] {$\scriptstyle{}$} (A1_0);
    \path (A1_0) edge [-] node [auto] {$\scriptstyle{}$} (A1_1);
    \path (A1_1) edge [-] node [auto] {$\scriptstyle{}$} (A1_2);
        \path (A1_2) edge [-] node [auto] {$\scriptstyle{}$} (A1_3);
  \end{tikzpicture}$$
which has adjusted weights forbidden by Condition~\ref{it:forbidden_configs} \ref{it:31001}. Thus we can rule out the possibility that $d(u)\leq 1$ and conclude \ref{it:min_candidate} of the lemma. 

\begin{case}
Neither $v_1$ nor $v_k$ is a leaf and $u$ is not adjacent to $v_1$ or $v_k$. 
\end{case}
Suppose that neither $v_1$ nor $v_k$ is a leaf. 
Assume the vertices $v_0$ and $v_{k+1}$ are distinct from $u$ and adjacent to $v_1$ and $v_{k}$, respectively. Since $v_1,\dots, v_k$ form a chain of twos, the maximality condition in Definition~\ref{def:chain} implies that $w(v_0),w(v_{k+1})\geq 3$. This in turn implies that $w'(v_0),w'(v_{k+1})\geq 1$. Note that Conditions~\ref{it:largeweight} and \ref{it:long_chain} imply that 
\begin{equation}\label{eq:kbound}
k\geq w'(v_0)+1 \qquad \text{ and }\qquad k\geq w'(v_{k+1})+1.
\end{equation}
In order to ensure that $u$ has zero pairing with $v_1, \dots, v_{k}$, we see that $u$ has to take the form
\[
u=a(e_0+ \dots + e_k)+u'
\]
where $a$ is a nonzero integer and $u'\cdot e_0 = \dots = u'\cdot e_k=0$. Since $k\geq 2$, this immediately establishes \ref{it:supp_bound} of the lemma. 

Computing the norm of $u$ yields $w(u)\geq k+1$ with equality if and only if $u$ takes the form $u=\pm(e_0+ \dots + e_k)$. If $w'(v_0)\geq 2$ or $w'(v_{k+1})\geq 2$, then \eqref{eq:kbound} implies $k\geq 3$ and hence 
\[w'(u) \geq w(u)-2\geq k-1 \geq 2.\]
This yields a contradiction, since there is at most one vertex with adjusted weight greater than one by Condition~\ref{it:largeweight}. Thus $w'(v_0)=w'(v_{k+1})=1$. Hence, Lemma~\ref{lem:0-chain_rigid} implies that the subgraph of $\Gamma$ given by $v_0,\dots, v_{k+1}$ is rigid. Thus if $u$ takes the form $u=\pm(e_0+ \dots + e_k)$, then it pairs nontrivially with both $v_{0}$ and $v_{k+1}$, which is impossible. Thus we conclude $$w(u)\geq k+2\geq 4\qquad \text{ and } \qquad w'(u)\geq 2,$$ allowing us to obtain \ref{it:u_large_norm} of the lemma.

If $u$ has degree two, then as in the previous cases it is adjacent to a chain of twos and this chain is distinct from $v_1, \dots, v_k$, since $u$ is not adjacent to $v_1$ or $v_k$. If $u$ has degree $d(u)<2$, then, we have
$$ 3 \geq w'(u) \geq w(u) -1 \geq k+1 \geq 3.$$
In particular, we have $w'(u)=3$. Since $u$ cannot be adjacent to both $v_0$ and $v_{k+1}$, we may assume without loss of generality that $u$ is not adjacent to $v_0$. Since $w(v_0)-2=w'(v_0)=1$, we see that $v_0$ is adjacent to a vertex $v_{-1}\neq u$ which has adjusted weight $w'(v_{-1})\leq 1$ by Condition~\ref{it:largeweight}.  Furthermore, since $w'(u)=3$, Condition~\ref{it:weight3condition} implies that $\Gamma$ cannot contain two adjacent vertices of adjusted weight one. Thus $w'(v_{-1})=0$ and we can apply Lemma~\ref{lem:degreetwochains} to obtain a second chain of twos in $\Gamma$ and hence \ref{it:min_candidate} of the lemma in this case.

\begin{case}
Neither $v_1$ nor $v_k$ is a leaf and $u$ is adjacent to $v_1$ or $v_k$. 
\end{case}
Without loss of generality, we may assume that $u$ is adjacent to $v_1$. Hence, we have $w'(u)\geq 1$ and $ w(u)\geq 3$. Since $v_{k}$ is not a leaf, there is a vertex $v_{k+1}$, which is adjacent to $v_{k}$ and satisfies $w'(v_{k+1})\geq 1$ and $w(v_{k+1})\geq 3$. If we had both $w'(u)=1$ and $w'(v_{k+1})=1$, then Lemma~\ref{lem:0-chain_rigid} would imply that the subgraph given by $u, v_1, \dots, v_{k+1}$ is rigid, however the hypothesis on $u$ shows that this subgraph is not rigid. Thus we have that $w'(u)>1$ or $w'(v_{k+1})>1$. In either case, Conditions~\ref{it:largeweight} and \ref{it:long_chain} imply that $k\geq 3$.

Now, in order to obtain the correct pairings with $v_1,\dots, v_k$, the vertex $u$ must take the form
\[
u= a(e_0 + \dots + e_k)+e_0 + u'
\]
where $a$ is a nonzero integer and $u'\cdot e_0 = \dots = u'\cdot e_k=0$. Since $k\geq 3$, it is evident that \ref{it:supp_bound} of the lemma holds. 

Computing the norm of $u$ shows $w(u)\geq k$ with equality if and only if $u=-(e_1 + \dots + e_k)$. That is, we have equality only if $a=-1$ and $u'=0$. However, we will see that $u$ cannot take the form $u=-(e_1 + \dots + e_k)$. In order to obtain the correct pairings with $v_1,\dots, v_k$, the vertex $v_{k+1}$ must take the form
\[
v_{k+1}= b(e_0 + \dots + e_k)-e_k + v'
\]
where $b$ is an integer and $v'\cdot e_0 = \dots = v'\cdot e_k=0$. If $u=-(e_1 + \dots + e_k)$, then we have that $u\cdot v_{k+1}=-kb+1=0$, which is absurd. Thus we conclude that $w(u)\geq k+1\geq 4$. This implies that $w'(u)\geq 2$, giving \ref{it:u_large_norm} of the lemma. Furthermore, Condition~\ref{it:largeweight} implies that $w'(v_{k+1})=1$, $w(v_{k+1})=3$, and $d(v_{k+1})=2$.

Finally, we wish to show that there is an additional chain of twos in $\Gamma$. If $w'(u)=3$, then Condition~\ref{it:weight3condition} implies that $\Gamma$ does not contain adjacent vertices with adjusted weight one. Thus $v_{k+1}$ must be adjacent to vertices of adjusted weight zero on both sides, giving a second chain of twos distinct from $v_1,\dots, v_{k}$. Thus it remains to assume that $w'(u)=2$. This implies that $w(u)=4$, $d(u)=2$, and $k=3$. Now, since both $u$ and $v_{k+1}$ have degree two, they must be adjacent to two other vertices. If $\Gamma$ contains no further chains of weight two, then Condition~\ref{it:3adjacent} implies that the connected component of $\Gamma$ containing $u$ must take the form

$$\begin{tikzpicture}[xscale=1.0,yscale=1,baseline={(0,0)}]
    \node at (-1.0, .4) {$2$};
    \node at (0.0, .4) {$4$};
    \node at (1.0, .4) {$2$};
    \node at (2.0, .4) {$2$};
    \node at (3.0, .4) {$2$};
        \node at (4.0, .4) {$3$};
                \node at (5.0, .4) {$2$};
        \node (A1_9) at (-1, 0) {$\bullet$};
    \node (A1_0) at (0, 0) {$\bullet$};
    \node (A1_1) at (1, 0) {$\bullet$};
    \node (A1_2) at (2, 0) {$\bullet$};
    \node (A1_3) at (3, 0) {$\bullet$};
        \node (A1_4) at (4, 0) {$\bullet$};
                \node (A1_5) at (5, 0) {$\bullet$};
        \path (A1_9) edge [-] node [auto] {$\scriptstyle{}$} (A1_0);
    \path (A1_0) edge [-] node [auto] {$\scriptstyle{}$} (A1_1);
    \path (A1_1) edge [-] node [auto] {$\scriptstyle{}$} (A1_2);
        \path (A1_2) edge [-] node [auto] {$\scriptstyle{}$} (A1_3);
                \path (A1_3) edge [-] node [auto] {$\scriptstyle{}$} (A1_4);
                                \path (A1_4) edge [-] node [auto] {$\scriptstyle{}$} (A1_5);
  \end{tikzpicture}.$$
One can verify by computer calculation that this particular graph is rigid (see Appendix~\ref{sec:computer} and Lemma~\ref{lem:computer2} for further details). In particular, it implies that  $|\supp(u) \cap \supp(v_1, \dots, v_k)|=1$ contradicting the assumption on $u$. This establishes \ref{it:min_candidate} in the final case and concludes the proof.\end{proof}

Finally we are ready to show that the Working Conditions imply rigidity.
\begin{thm}\label{maintechnical:thm}
If $\Gamma$ is a disjoint union of linear plumbing graphs satisfying the Working Conditions, then $\Gamma$ is rigid.
\end{thm}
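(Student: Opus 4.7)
The plan is to prove rigidity by strong induction on $|V(\Gamma)|$, using the previous rigidity lemmas as the base case and main technical tools. Given any embedding $\phi : V(\Gamma) \to \Z^n$, I would first apply an automorphism of $\Z^n$ so that every chain of twos $v_1,\dots,v_k$ embeds in the standard form $v_i = e_i - e_{i-1}$ with the supports of different chains of twos being pairwise disjoint; this is possible by Lemma~\ref{lem:2chains_rigid}. After this normalization, rigidity reduces to showing that every remaining vertex $u$ (namely one with $w'(u) \geq 1$) also embeds standardly, i.e.\ $|\supp(u) \cap \supp(v)|$ equals $1$ if $u$ is adjacent to $v$ and $0$ otherwise, for every vertex $v$ already fixed.

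The second step is to dispose of the ``tame'' case: if no vertex $u$ has overlap $|\supp(u) \cap \supp(v_1,\dots,v_k)| \geq 2$ with any chain of twos, then $u$ is pinned down one unit vector at a time from its pairings with the neighboring chains. The $[3,2,2,3]$ configuration is handled by Lemma~\ref{lem:322rigidk=2}, a leaf adjacent to a single chain of twos is handled directly by computing the norm as in the proof of Lemma~\ref{lem:0-chain_rigid}, and the remaining short configurations fall to a simple pairing computation. Thus one can iteratively extend the standardized embedding of chains of twos to the whole of $\Gamma$, and conclude rigidity in this case.

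So the heart of the proof is the ``wild'' case: some $u$ does have overlap $\geq 2$ with a chain of twos $v_1,\dots,v_k$. Invoking Lemma~\ref{lem:min_example_properties}, the overlap is $\geq 3$, $u$ is the \emph{unique} vertex of $\Gamma$ with $w'(u) > 1$, and $\Gamma$ contains a second chain of twos $C'$ disjoint from $v_1,\dots,v_k$. From $w'(u) \leq 3$ (Condition~\ref{it:largeweight}), the norm $w(u) \leq 3 + d(u)$, and $u$ has at most two neighbors in $\Gamma$, so the total support of $u$ is severely constrained. One then shows that $u$ is forced into the role of a ``mediator'' linking the two chains of twos, and analyses the possibilities case by case according to (i) the degree of $u$, (ii) whether $u$ is adjacent to a leaf of $v_1,\dots,v_k$, and (iii) the lengths of the chains involved. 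Each case either exhibits a forbidden subgraph ruled out by Condition~\ref{it:weight3condition}, \ref{it:long_chain}, or \ref{it:forbidden_configs}, or admits a strictly smaller subgraph $\Gamma' \subset \Gamma$ satisfying the Working Conditions to which the inductive hypothesis applies, pinning down $\phi(u)$ up to sign from its pairings with $\Gamma'$.

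The main obstacle is executing this final case analysis cleanly: the Working Conditions have been calibrated precisely so that no non-standard embedding survives, and verifying this requires checking each surviving local configuration around $u$ and each of the two chains of twos it touches. The parity argument from Lemma~\ref{lem:2chains_rigid} (showing that $u \cdot v_1 \equiv u \cdot v_2 \pmod 2$ forces unwanted adjacency) and the norm bound from the proof of Lemma~\ref{lem:0-chain_rigid} reappear repeatedly and together eliminate every residual possibility, completing the induction.
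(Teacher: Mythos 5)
Your overall architecture (induction on the number of vertices, with Lemmas~\ref{lem:2chains_rigid}--\ref{lem:min_example_properties} as the tools) matches the paper's, but the two steps that actually make the induction close are missing, and your replacement for them does not work as stated. First, the decisive point in the paper's proof is not a case analysis around the ``wild'' vertex $u$ at all: after invoking Lemma~\ref{lem:min_example_properties} once, one applies it a \emph{second} time to the second chain of twos and observes that a single vertex cannot meet the supports of two disjoint chains in $\geq 3$ coordinates each, because Condition~\ref{it:largeweight} and $d(u)\leq 2$ force $w(u)\leq 5$ while Lemma~\ref{lem:2chains_rigid} makes those supports disjoint. Hence \emph{some} chain of twos is ``clean'' -- every other vertex meets its support in at most one coordinate -- and the induction proceeds along that clean chain. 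Your picture of $u$ as a ``mediator linking the two chains'' is exactly the configuration this norm count rules out; the wild case is never resolved by direct analysis (it is only shown to be vacuous a posteriori), and you yourself flag the proposed case analysis as unexecuted, so as written this is a plan rather than a proof.

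Second, your inductive step is not the right one. Applying the inductive hypothesis to a strictly smaller \emph{subgraph} $\Gamma'\subset\Gamma$ only controls the pairings and supports among the vertices of $\Gamma'$; it says nothing about how vertices of $\Gamma'$ share support with vertices outside it, which is precisely what rigidity of $\Gamma$ requires. The paper's reduction instead deletes the clean chain and \emph{decrements the weights} of its neighbour(s); the resulting graph is not a subgraph of $\Gamma$, but its embedding induced on the orthogonal complement of the chain's coordinates (sending $v_{k+1}=-e_k+v'$ to $v'$, etc.) faithfully records everything outside the chain, and one must check -- and the paper does -- that this modified graph still satisfies the Working Conditions. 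The same issue undermines your ``tame'' case: knowing that each vertex of adjusted weight $\geq 1$ meets each chain of twos in at most one coordinate does not pin down the mutual supports of those large vertices (e.g.\ leftover parts $f_1+f_2$ and $f_1-f_2$ pair to zero yet overlap), which is exactly the degeneration that Lemmas~\ref{lem:2chains_rigid} and \ref{lem:0-chain_rigid} had to exclude with parity and forbidden-configuration arguments; the contraction-style induction handles this uniformly, whereas the one-vertex-at-a-time extension you describe does not. Without the clean-chain existence argument and the weight-decrementing reduction (with the verification that the Working Conditions are preserved), the proposal has a genuine gap.
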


\begin{proof}
We prove this by induction on the number of vertices in $\Gamma$. For the base case we assume that $\Gamma$ contains at most two vertices. It follows from Condition~\ref{it:largeweight} that $\Gamma$ must be one of the following five graphs in this case:
\[\begin{tikzpicture}[xscale=1.0,yscale=1,baseline={(0,0)}]
    \node at (1, .4) {$2$};
    \node (A_1) at (1, 0) {$\bullet$};
  \end{tikzpicture},\qquad
  \begin{tikzpicture}[xscale=1.0,yscale=1,baseline={(0,0)}]
    \node at (1, .4) {$3$};
    \node (A_1) at (1, 0) {$\bullet$};
  \end{tikzpicture},\qquad
  \begin{tikzpicture}[xscale=1.0,yscale=1,baseline={(0,0)}]
    \node at (1, .4) {$2$};
    \node at (2, .4) {$2$};
    \node (A_1) at (1, 0) {$\bullet$};
    \node (A_2) at (2, 0) {$\bullet$};
        \path (A_1) edge [-] node [auto] {$\scriptstyle{}$} (A_2);
  \end{tikzpicture},\qquad
  \begin{tikzpicture}[xscale=1.0,yscale=1,baseline={(0,0)}]
    \node at (1, .4) {$2$};
    \node at (2, .4) {$3$};
    \node (A_1) at (1, 0) {$\bullet$};
    \node (A_2) at (2, 0) {$\bullet$};
        \path (A_1) edge [-] node [auto] {$\scriptstyle{}$} (A_2);
  \end{tikzpicture},\qquad \text{ or } \qquad
   \begin{tikzpicture}[xscale=1.0,yscale=1,baseline={(0,0)}]
    \node at (1, .4) {$2$};
    \node at (2, .4) {$4$};
    \node (A_1) at (1, 0) {$\bullet$};
    \node (A_2) at (2, 0) {$\bullet$};
        \path (A_1) edge [-] node [auto] {$\scriptstyle{}$} (A_2);
  \end{tikzpicture}.\]
  Each of these is easily checked to be rigid, giving the base case. 
  
  Now suppose that $\Gamma$ contains at least three vertices. We will see that such a $\Gamma$ must contain a chain of twos. If $\Gamma$ has at least three vertices in one connected component, then it contains a vertex of degree two guaranteeing a chain of twos by Lemma~\ref{lem:degreetwochains}. On the other hand if $\Gamma$ contains only connected components with at most two vertices, then Condition~\ref{it:largeweight} implies that only one of these components contains a vertex with $w'(v)>1$. Thus all but one of these components will take the following form
  $$\begin{tikzpicture}[xscale=1.0,yscale=1,baseline={(0,0)}]
    \node at (1, .4) {$2$};
    \node at (2, .4) {$2$};
    \node (A_1) at (1, 0) {$\bullet$};
    \node (A_2) at (2, 0) {$\bullet$};
        \path (A_1) edge [-] node [auto] {$\scriptstyle{}$} (A_2);
  \end{tikzpicture}.$$
Thus we can assume that $\Gamma$ contains a chain of twos.

We claim further that $\Gamma$ contains a chain of twos $v_1, \dots, v_k$ such that for every vertex $v\not\in\{v_1,\dots, v_k\}$ we have
$$ |\supp(v) \cap \supp(v_1, \dots, v_k)|\leq 1.$$
Let $u_1, \dots, u_{\ell}$ be a chain of twos. Suppose that there is a vertex $u$ not contained in the chain of twos $u_1, \dots, u_{\ell}$ with 
$$ |\supp(u) \cap \supp(u_1, \dots, u_\ell)|\geq 2.$$
 By Lemma~\ref{lem:min_example_properties}, we see that $u$ is the unique vertex with $w'(u)>1$ and that $ |\supp(u) \cap \supp(u_1, \dots, u_\ell)|\geq 3$. Moreover, $\Gamma$ must contain some other chain of twos $v_1, \dots, v_{k}$. Now, suppose there is a vertex $v$ not contained in the chain of twos $v_1, \dots, v_{k}$ such that \[|\supp(v) \cap \supp(v_1, \dots, v_{k})|\geq 2.\]
Again by Lemma~\ref{lem:min_example_properties}, we have that $v$ is the unique vertex with $w'(v)>1$ and that $$ |\supp(v) \cap \supp(v_1, \dots, v_{k})|\geq 3.$$ In particular, we conclude that $u=v$. However, as $\supp(v_1, \dots, v_{k})$ and $\supp(u_1, \dots, u_\ell)$ are disjoint, this implies that
\[
w(v)\geq \supp(v)\geq 6,
\]
contradicting Condition~\ref{it:largeweight}.  Hence, we conclude that 
$$|\supp(v) \cap \supp(v_1, \dots, v_{k})|\leq 1$$ for all vertices $v$ not contained in the chain of twos $v_1, \dots, v_k$.

Now, take such chain of twos $v_1, \dots, v_k$ and assume that $v_i$ is embedded as $v_i=e_i-e_{i-1}$ for each $i=1, \dots, k$. We break down the proof into three cases.
\setcounter{case}{0}
\begin{case}
Both $v_1$ and $v_k$ are leaves.
\end{case}
In this case, $v_1, \dots, v_k$ forms a connected component of $\Gamma$ and for every other vertex we have $\supp(v) \cap \supp(v_1, \dots, v_k)=\varnothing$. Let $\Gamma'$ be the plumbing graph obtained by deleting this component. Note that $\Gamma'$ also satisfies the Working Conditions. The embedding of $\Gamma$ restricts to give an embedding of $\Gamma'$. By the inductive hypothesis $\Gamma'$ is rigid and hence the induced embedding is standard. It follows that the original embedding of $\Gamma$ was standard. By Remark~\ref{rem:standard_embeddings}, this implies that $\Gamma$ is rigid.
\begin{case}

Exactly one of $v_1$ and $v_k$ is a leaf.
\end{case}
In this case, we can assume that there is a vertex $v_{k+1}$, which is adjacent to $v_k$. This vertex must take the form $v_{k+1}=-e_k+v'$ where $v'\cdot e_0 =\dots = v'\cdot e_k=0$. For all other vertices we must have $\supp(v) \cap \supp(v_1, \dots, v_k)=\varnothing$. Now, let $\Gamma'$ be the plumbing graph obtained by deleting $v_1, \dots, v_k$ and decreasing the weight of $v_{k+1}$ by one. Note that $\Gamma'$ also satisfies the Working Conditions. The plumbing graph $\Gamma'$ inherits an embedding in $\Z^n$ obtained by deleting $v_1,\dots, v_k$ and sending $v_{k+1}$ into $\Z^n$ as $v'$. By the inductive hypothesis $\Gamma'$ is rigid and hence the induced embedding is standard. It follows that the original embedding of $\Gamma$ was standard. By Remark~\ref{rem:standard_embeddings}, this implies that $\Gamma$ is rigid.
\begin{case}

Neither $v_1$ nor $v_k$ is a leaf.
\end{case}
In this case, there are vertices $v_0$ and $v_{k+1}$ adjacent to $v_1$ and $v_k$ respectively. These must embed as $v_{0}=e_0+v'$ and $v_{k+1}=-e_k+v''$, where $v'$ and $v''$ satisfy $v'\cdot e_0 =\dots = v'\cdot e_k=0$ and $v''\cdot e_0 =\dots = v''\cdot e_k=0$. Take $\Gamma'$ to be the plumbing graph  obtained by deleting $v_1, \dots, v_k$ and decreasing the weights on $v_0$ and $v_{k+1}$ by one. Note that $\Gamma'$ also satisfies the Working Conditions. This inherits an embedding in $\Z^n$ obtained by deleting $v_1,\dots, v_k$ and embedding $v_0$ and $v_{k+1}$ as $v'$ and $v''$. By the inductive hypothesis $\Gamma'$ is rigid and hence the induced embedding is standard. It follows that the original embedding of $\Gamma$ was standard. By Remark~\ref{rem:standard_embeddings}, this implies that $\Gamma$ is rigid. This completes the induction step.
\end{proof}
This allows us to conclude with the proof of Theorem~\ref{thm:technical}, which forms the final ingredient in the proof of Theorem~\ref{thm:main}.
\begin{proof}[Proof of Theorem~\ref{thm:technical}]
By Lemma~\ref{lem:forbidden_configurations}, the manifold $-\natural X(p_i, p_i-q_i)$ is plumbed according to a dual plumbing graph $P^*$ satisfying the Working Conditions. By Theorem~\ref{maintechnical:thm}, this implies the plumbing graph $P^*$ is rigid and hence the intersection form  of $-\natural X(p_i, p_i-q_i)$  is also rigid.
\end{proof}

\section{Lens spaces not bounding small definite manifolds}\label{sec:no_small}
In this section, we use ideas from the previous sections to prove Theorem~\ref{thm:no_small_fillings} which asserts the existence of lens spaces that do not admit a ``small'' negative-definite filling with either orientation. As in previous sections we use $\Z^N$ to denote the standard positive-definite diagonal lattice $(\Z^N, \langle 1 \rangle^N)$.

Throughout this section we will use $L_{m,n}$ to denote the lens space $L(p,q)$, where $p/q$ is given by the continued fraction expansion
\[
\frac{p}{q}=\left[9^{[n]},(3,2,2,2,2)^{[m]}\right]^-
\]
and $m$ and $n$ are positive integers.
The examples in Theorem~\ref{thm:no_small_fillings} will be obtained by choosing suitable examples among the $L_{m,n}$.
By \eqref{eq:Riemanschneider_long}, we see that $p/(p-q)$ satisfies the continued fraction
\begin{equation}\label{eq:dual_frac}
\frac{p}{p-q}=\left[2,(2,2,2,2,2,2,3)^{[n]},7^{[m-1]},6\right]^-.
\end{equation}

First we obtain a lower bound on any smooth negative-definite filling of $-L_{m,n}$.
\begin{prop}\label{prop:first_bound}
If $X$ is a smooth negative-definite filling of $-L_{m,n}$, then $b_2(X)\geq m-n+1$.
\end{prop}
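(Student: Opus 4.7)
The plan is to apply Donaldson's diagonalization theorem to a 4-manifold obtained by capping off $X$ with the canonical plumbing, and then to extract a lower bound on $N$ from the rigidity result of Section~\ref{sec:lattice_analysis} applied to a carefully chosen subgraph.

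Concretely, I would form $W = X \cup_{L_{m,n}} X(p,q)$, where $X(p,q)$ is the canonical negative-definite plumbing bounded by $L_{m,n}$ (which has $b_2 = n + 5m$, the length of the continued fraction for $p/q$). Since $\partial X = -L_{m,n}$ and $\partial X(p,q) = L_{m,n}$, the closed manifold $W$ is smooth and negative-definite, so Donaldson's theorem gives $Q_W \cong -\mathbb{Z}^N$ with $N = b_2(X) + n + 5m$. By Lemma~\ref{lem:orthogonal_complement} applied after a sign flip, the inclusion $X(p,q) \hookrightarrow W$ provides a lattice embedding $Q_{X(p,q)} \hookrightarrow -\mathbb{Z}^N$, equivalently (after negating) an embedding into $\mathbb{Z}^N$, in the sense of Section~\ref{sec:lattice_analysis}, of the linear graph with weights $(9, \ldots, 9, 3, 2, 2, 2, 2, \ldots, 3, 2, 2, 2, 2)$ consisting of $n$ nines followed by $m$ copies of the block $(3,2,2,2,2)$.

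The key step is to restrict this embedding to the induced subgraph $\Gamma$ on the last $5m$ vertices, i.e.\ the chain formed by the $m$ blocks alone. I would then verify that $\Gamma$ satisfies all of the Working Conditions. In $\Gamma$ the leftmost $-3$ vertex becomes a leaf and so has adjusted weight $2$, making it the unique vertex with $w' > 1$; all remaining $w' > 0$ vertices (the $m-1$ internal $-3$'s and the rightmost $-2$) have $w' = 1$ and are separated pairwise, and from the $w' = 2$ vertex, by runs of at least three consecutive $w' = 0$ vertices, which suffices for Condition~\ref{it:long_chain}; Conditions~\ref{it:weight3condition} and \ref{it:forbidden_configs} hold vacuously since $\Gamma$ contains no vertex of adjusted weight $3$ and no two adjacent vertices of adjusted weight $1$.

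By Theorem~\ref{maintechnical:thm}, $\Gamma$ is therefore rigid, so every embedding of $\Gamma$ into any $\mathbb{Z}^N$ is standard. A standard embedding of $\Gamma$ uses exactly $\sum_{v \in V(\Gamma)} w(v) - E(\Gamma) = 11m - (5m-1) = 6m + 1$ distinct orthogonal unit vectors, forcing $N \geq 6m + 1$. Combined with $N = b_2(X) + n + 5m$, this immediately yields $b_2(X) \geq m - n + 1$, as required. I expect the only nontrivial step to be the verification of the Working Conditions for $\Gamma$ (along with correctly choosing the subgraph, which exploits precisely the fact that the $(3,2,2,2,2)$-blocks are long enough that Condition~\ref{it:long_chain} is satisfied for the left endpoint); the rigidity theorem then functions as a black box that produces the desired inequality.
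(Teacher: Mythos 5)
Your proposal is correct and follows essentially the same route as the paper: cap off with the canonical plumbing, apply Donaldson's theorem, restrict the resulting embedding to the $[3,2,2,2,2]^m$ subchain, check the Working Conditions so that Theorem~\ref{maintechnical:thm} gives rigidity, and use the standard-embedding count $11m-(5m-1)=6m+1$ from Remark~\ref{rem:standard_embeddings}\ref{it:standard_bound} to force $N\geq 6m+1$ and hence $b_2(X)\geq m-n+1$. The only differences are cosmetic (you work with the negative-definite closed manifold rather than its orientation reverse, and you spell out the Working Conditions verification that the paper leaves as ``one can check'').
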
 
\begin{proof}
Let $X$ be a smooth negative-definite filling of $-L_{m,n}$. We construct a smooth closed positive-definite 4-dimensional manifold $-X\cup_{L(p,q)} -X(p,q)$. By Donaldson's diagonalization theorem this has diagonalizable intersection form and so we obtain a map of lattices
\[
-Q_{X(p,q)}\hookrightarrow \Z^{N},
\]

where $N= b_2(X(p,q)) + b_2(X)$. Here, $-X(p,q)$ is the 4-manifold obtained by plumbing according to the  plumbing graph $\Gamma$:
$$\begin{tikzpicture}[xscale=1.0,yscale=1,baseline={(0,0)}]
    \node at (1, .4) {$a_1$};
    \node at (2, .4) {$a_2$};
        \node at (3, .4) {$a_3$};
    \node at (5, .4) {$a_{\ell}$};
    \node (A1_1) at (1, 0) {$\bullet$};
    \node (A1_2) at (2, 0) {$\bullet$};
    \node (A1_3) at (3, 0) {$\bullet$};
    \node (A1_4) at (4, 0) {$\cdots$};
    \node (A1_5) at (5, 0) {$\bullet$};
    \path (A1_2) edge [-] node [auto] {$\scriptstyle{}$} (A1_3);
    \path (A1_3) edge [-] node [auto] {$\scriptstyle{}$} (A1_4);
        \path (A1_4) edge [-] node [auto] {$\scriptstyle{}$} (A1_5);
    \path (A1_1) edge [-] node [auto] {$\scriptstyle{}$} (A1_2);
  \end{tikzpicture},$$
  where the coefficients are given by the tuple
  \[
 (a_1,\dots, a_\ell)= \left(9^{[n]},(3,2,2,2,2)^{[m]}\right).
  \]
Thus the map of lattices $-Q_{X(p,q)}\hookrightarrow \Z^{N}$ induces an embedding of $\Gamma$ into $\Z^N$. Let $\Gamma'$ be the subgraph of $\Gamma$ corresponding to the tuple $(3,2,2,2,2)^{[m]}$. One can check that $\Gamma'$ satisfies the Working Conditions from Section~\ref{Section:preliminaries}. Therefore $\Gamma'$ is rigid by Theorem~\ref{maintechnical:thm}. By the bound in Remark~\ref{rem:standard_embeddings}\ref{it:standard_bound} this implies that $N\geq 6m+1$ (the graph $\Gamma'$ has $5m-1$ edges and the sum of its weights is $11m$). Since $b_2(X(p,q))=n+5m$, this gives 
\[b_2(X)=N-b_2(X(p,q))\geq m-n+1,\] which is the desired bound on $b_2(X)$.
\end{proof}

Next we obtain a lower bound on any smooth negative-definite filling of $L_{m,n}$. The argument is similar although a little more delicate.
\begin{prop}\label{prop:second_bound}
If $X$ is a smooth negative-definite filling of $L_{m,n}$, then $b_2(X)\geq n-1$.
\end{prop}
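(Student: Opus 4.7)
The plan is to mirror the proof of Proposition~\ref{prop:first_bound}, but using the dual canonical filling. Let $X$ be a smooth negative-definite filling of $L_{m,n}$. Since $X(p,p-q)$ is a negative-definite filling of $L(p,p-q) \cong -L_{m,n}$, the closed 4-manifold $X \cup_{L_{m,n}} X(p,p-q)$ is negative-definite, and Donaldson's diagonalization theorem produces an embedding of lattices
\[ -Q_X \oplus -Q_{X(p,p-q)} \hookrightarrow \Z^N, \qquad N = b_2(X) + 7n + m + 1. \]
By~\eqref{eq:dual_frac}, the lattice $-Q_{X(p,p-q)}$ is the positive-definite linear plumbing with weight tuple $(2, [2,2,2,2,2,2,3]^n, [7]^{m-1}, 6)$. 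I will show that any such embedding forces $N \geq 8n + m$, which immediately yields $b_2(X) \geq n - 1$.

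The rigid backbone is the subchain $\Gamma_1 = (2, [2,2,2,2,2,2,3]^n)$. Its adjusted weights are $1$ at the leading $2$ and at each interior $3$, $2$ at the leaf $3$, and $0$ at every intermediate $2$, with runs of $0$'s of length $6$ between consecutive $1$'s; a direct check shows that $\Gamma_1$ satisfies the Working Conditions of Section~\ref{Section:preliminaries}, so by Theorem~\ref{maintechnical:thm} it is rigid. Consequently the embedding restricts to a standard embedding of $\Gamma_1$ with support $S_1$ of size $8n + 2$, which we may write as $S_1 = \{g_0, \ldots, g_{7n}\} \cup \{h_1, \ldots, h_{n+1}\}$, with the leaf $3$ of $\Gamma_1$ equal to $-g_{7n} + h_n + h_{n+1}$; in particular the unit vectors $h_n$ and $h_{n+1}$ appear only in the support of this leaf.

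The technical heart of the argument is the claim that for each vertex $u_k$ of the trailing subchain $([7]^{m-1}, 6)$, indexed so that $u_0$ is adjacent to the leaf $3$ of $\Gamma_1$, the $S_1$-part of $u_k$ lies in the rank-$2$ lattice $\Z h_n \oplus \Z h_{n+1}$. Writing $u_k = \sum_i \alpha_i^{(k)} g_i + \sum_j \beta_j^{(k)} h_j + w_k$ with $w_k \perp S_1$, orthogonality of $u_k$ to the $2$-vertices of $\Gamma_1$ forces $\alpha^{(k)}$ to be constant on each of the $n$ maximal $2$-chains, with values $A_0^{(k)}, \ldots, A_{n-1}^{(k)}$; orthogonality to each interior $3$ gives $\beta_l^{(k)} = A_{l-1}^{(k)} - A_l^{(k)}$ for $l = 1, \ldots, n-1$; and the leaf-$3$ pairing forces $\beta_n^{(k)} + \beta_{n+1}^{(k)} = A_{n-1}^{(k)} - \delta_{k,0}$. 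The main obstacle is the ensuing norm-budget computation: the first $2$-chain contributes $8(A_0^{(k)})^2$ to $|u_k|^2$ (since it spans $8$ distinct $g$-indices), and each later one contributes $7(A_l^{(k)})^2$. Any nonzero $A_l^{(k)}$ thus contributes at least $7$ while inducing nontrivial $\beta$-differences that push the total norm strictly above the budget $|u_k|^2 \in \{6,7\}$. Hence all $A_l^{(k)}$ and all $\beta_l^{(k)}$ with $l < n$ vanish, and the claim follows.

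Once this is established, the conclusion is a short rank count. The vectors $u_0, \ldots, u_{m-1}$ are linearly independent, so they span a rank-$m$ sublattice. Since all their $S_1$-parts lie in the rank-$2$ lattice $\Z h_n \oplus \Z h_{n+1}$, their orthogonal complements in $S_1^\perp$ span a sublattice of rank at least $m - 2$. Therefore $\dim S_1^\perp \geq m - 2$, giving $N \geq |S_1| + (m-2) = 8n + m$, as required.
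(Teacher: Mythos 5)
Your argument is correct and takes essentially the same route as the paper: cap off with the dual canonical plumbing, apply Donaldson, invoke Theorem~\ref{maintechnical:thm} to get a standard embedding of the long subchain built from the $(2,[2,2,2,2,2,2,3])$ blocks, then a norm-budget computation showing the weight-$7$/weight-$6$ tail cannot use that support, giving $N\geq 8n+m$ and hence $b_2(X)\geq n-1$. The only difference is bookkeeping: the paper stops its rigid subchain just before the final $3$ and excludes the tail from the resulting $8n$-vector support via the minimum norm $8$ of the complement lattice $([9]^{n-1},8)$, whereas you include the final $3$ (support $8n+2$) and permit a two-dimensional leak into its two private unit vectors, recovering the same count by a rank argument.
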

\begin{proof}
Let $X$ be a smooth negative-definite filling of $L_{m,n}$. By the fact that $-L_{m,n}\cong L(p,p-q)$ and \eqref{eq:dual_frac}, we have that $-X(p,p-q)$ is the 4-manifold obtained by plumbing according to the plumbing graph~$\Gamma$:
$$\begin{tikzpicture}[xscale=1.0,yscale=1,baseline={(0,0)}]
    \node at (1, .4) {$a_1$};
    \node at (2, .4) {$a_2$};
        \node at (3, .4) {$a_3$};
    \node at (5, .4) {$a_{\ell}$};
    \node (A1_1) at (1, 0) {$\bullet$};
    \node (A1_2) at (2, 0) {$\bullet$};
    \node (A1_3) at (3, 0) {$\bullet$};
    \node (A1_4) at (4, 0) {$\cdots$};
    \node (A1_5) at (5, 0) {$\bullet$};
    \path (A1_2) edge [-] node [auto] {$\scriptstyle{}$} (A1_3);
    \path (A1_3) edge [-] node [auto] {$\scriptstyle{}$} (A1_4);
        \path (A1_4) edge [-] node [auto] {$\scriptstyle{}$} (A1_5);
    \path (A1_1) edge [-] node [auto] {$\scriptstyle{}$} (A1_2);
  \end{tikzpicture},$$
  where the coefficients are given by the tuple
\[
(a_1,\dots, a_\ell)=\left(2,(2,2,2,2,2,2,3)^{[n]},7^{[m-1]},6\right).
\]
In particular, we see that $b_2(X(p,p-q))=m+7n+1$. We construct a smooth closed positive-definite 4-dimensional manifold $-X \cup_{L(p,p-q)} -X(p,p-q)$. By Donaldson's diagonalization theorem this has diagonalizable intersection form and so we obtain an embedding of $\Gamma$ into $\Z^N$ where $N= b_2(X(p,p-q)) + b_2(X)$.
We now separate the vertices of $\Gamma$ into two groups. We will use $v_1,\dots, v_{7n}$ to denote the first $7n$ vertices, that is the vertices corresponding to the weights:
\[
\left(2,(2,2,2,2,2,2,3)^{[n-1]},2,2,2,2,2,2\right)
\]
and $u_0, \dots, u_{m}$ to denote the remaining $m+1$ vertices, that is the vertices corresponding to the weights:
\[
\left(3,7^{[m-1]},6\right).
\]
The subgraph induced by the vertices $v_1,\dots, v_{7n}$ satisfies the Working Conditions from Section~\ref{Section:preliminaries} and is hence rigid by Theorem~\ref{maintechnical:thm}. Thus up to automorphisms of $\Z^N$ the vertices $v_1,\dots, v_{7n}$ admit unique embedding and this is a standard embedding. Moreover, using the formula derived in Remark~\ref{rem:standard_embeddings}\ref{it:standard_bound} this unique embedding uses $8n$ distinct orthogonal unit vectors. Thus we can assume that the first $7n$ vertices are embedded in $v_1, \dots, v_{7n}\subseteq \langle e_1, \dots, e_{8n}\rangle$. 
\begin{claim}
Any nonzero vector 
\[v\in \langle v_1, \dots, v_{7n}\rangle^\bot \subseteq \langle e_1, \dots, e_{8n}\rangle\]
has length $\norm{v}\geq 8$.
\end{claim}
\begin{proof}[Proof of Claim]
We will show that 
\[\langle v_1, \dots, v_{7n}\rangle^\bot \subseteq \langle e_1, \dots, e_{8n}\rangle\]
is isomorphic to the lattice $Q_P$ associated to the plumbing graph $P$ given by
$$\begin{tikzpicture}[xscale=1.0,yscale=1,baseline={(0,0)}]
    \node at (1, .4) {$9$};
        \node at (3, .4) {$9$};
    \node at (4, .4) {$8$};
    \node (A1_1) at (1, 0) {$\bullet$};
    \node (A1_2) at (2, 0) {$\cdots$};
    \node (A1_3) at (3, 0) {$\bullet$};
    \node (A1_4) at (4, 0) {$\bullet$};
    \path (A1_2) edge [-] node [auto] {$\scriptstyle{}$} (A1_3);
    \path (A1_3) edge [-] node [auto] {$\scriptstyle{}$} (A1_4);
    \path (A1_1) edge [-] node [auto] {$\scriptstyle{}$} (A1_2);
  \end{tikzpicture}.$$
Then Lemma~\ref{lem:def_calc} implies that every nonzero element $x$ of $Q_P$ satisfies $Q_P(x,x)\geq 8$. Thus this isomorphism is sufficient to prove the bound. 
Let $r/s\in \Q$ be defined by the continued fraction
\[
r/s=\left[2,(2,2,2,2,2,2,3)^{[n-1]},2,2,2,2,2,2\right]^-.
\]
Since the plumbing graph for $-X(r,s)$ satisfies the Working Conditions, Theorem~\ref{maintechnical:thm} shows that the intersection form of $-X(r,s)$ is rigid. On the other hand, $r/(r-s)$ has continued fraction
\[
r/(r-s)=\left[[9]^{n-1},8\right]^-,
\]
and so the intersection form of $-X(r,r-s)$ is isomorphic to $Q_P$.

Now consider the smooth closed positive-definite 4-manifold
\[
Z=-X(r,s) \cup -X(r,r-s).
\]
By Donaldson's diagonalization theorem and the fact that $b_2(Z)=b_2(X(r,s))+b_2(X(r,r-s))=8n$, we have $Q_Z\cong \Z^{8n}$.
By Lemma~\ref{lem:orthogonal_complement}, the intersection form of $-X(r,r-s)$ is isomorphic to the orthogonal complement of $Q_{-X(r,s)}$ in $Q_Z\cong \Z^{8n}$. However, the rigidity of $Q_{X(r,s)}$ shows that up to automorphism the vectors $v_1,\dots, v_{7n}$ form a basis for the image of $Q_{-X(r,s)}$ in $\Z^{8n}$. Putting together all these facts we obtain
\[
Q_P\cong Q_{-X(r,r-s)}\cong \langle v_1, \dots, v_{7n}\rangle^\bot \subseteq \Z^{8n},
\]
as desired.
\end{proof}
 Since the vertices $u_1, \dots, u_{m}$ satisfy $\norm{u_i}\leq 7$ and are orthogonal to $v_1, \dots, v_{7n}$, the above claim shows that they all must have trivial pairing with the each of $e_1,\dots, e_{8n}$. This implies that the embeddings of $u_1, \dots, u_m$ must use at least $m$ additional unit vectors, and we have that $N\geq 8n+m$. Thus,
\[
b_2(X)=N-b_2(X(p,p-q))\geq 8n+m -(m+7n+1)=n-1,
\]
which is the required bound.
\end{proof}

With these bounds in hand we exhibit the examples for Theorem~\ref{thm:no_small_fillings}.
\begin{proof}[Proof of Theorem~\ref{thm:no_small_fillings}]
We take $L_k$ to be the lens space $L_{m,n}$ with $m=2k$ and $n=k+1$. By Proposition~\ref{prop:first_bound}, we see that a negative-definite filling $X$ of $-L_k$ satisfies $b_2(X)\geq m-n+1=k$ and, by Proposition~\ref{prop:second_bound}, a negative-definite filling $X$ of $L_k$ satisfies $b_2(X)\geq n-1=k$.
\end{proof}

\section{Embedding lens spaces in 4-manifolds}\label{sec:embeddings}
We now turn our attention to embedding lens spaces in 4-manifolds.
\begin{prop}\label{prop:indef_embedding}
Let $L_k$ be a lens space as given in Theorem~\ref{thm:no_small_fillings}. If $L_k$ embeds as a separating submanifold of a smooth closed 4-manifold $X$ with $|\sigma(X)|\geq b_2(X)-2$, then $k\leq b_2(X)$.
\end{prop}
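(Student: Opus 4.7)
The plan is to split $X$ along the embedded separating copy of $L_k$, writing $X=X_1\cup_{L_k}X_2$ with $\partial X_1=L_k$ and $\partial X_2=-L_k$. Because $L_k$ is a rational homology sphere, Mayer--Vietoris with $\mathbb{Q}$-coefficients gives $b_2(X)=b_2(X_1)+b_2(X_2)$, and Novikov additivity (which applies since the gluing 3-manifold has vanishing rational $H_1$) gives $\sigma(X)=\sigma(X_1)+\sigma(X_2)$.

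Next I would unpack the signature hypothesis. Writing $b_2^{\pm}$ for the dimensions of the maximal positive and negative subspaces of the intersection form, the inequality $|\sigma(X)|\geq b_2(X)-2$ is equivalent to $\min(b_2^+(X),b_2^-(X))\leq 1$. Suppose without loss of generality that $b_2^-(X)\leq 1$. Then the additivity above forces $b_2^-(X_1)+b_2^-(X_2)\leq 1$, so one of the pieces, say $X_1$, has $b_2^-(X_1)=0$, i.e.\ $X_1$ is positive semi-definite. Because $\partial X_1=L_k$ is a rational homology sphere, the intersection form of $X_1$ is non-degenerate over $\mathbb{Q}$, which upgrades semi-definiteness to genuine positive-definiteness.

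Then $-X_1$ is a smooth negative-definite filling of $-L_k$, and Theorem~\ref{thm:no_small_fillings} immediately yields $b_2(X_1)=b_2(-X_1)\geq k$. Hence $b_2(X)\geq b_2(X_1)\geq k$, as required. The symmetric case $b_2^+(X)\leq 1$ produces a negative-definite piece bounding either $L_k$ or $-L_k$ directly, and the same conclusion follows since Theorem~\ref{thm:no_small_fillings} covers both orientations.

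The only mildly delicate ingredient is the non-degeneracy step that promotes semi-definiteness to definiteness; this is precisely where the rational homology sphere property of $L_k$ enters, both here and (implicitly) in invoking Novikov additivity. Everything else is an elementary combination of the additivity identities with a direct invocation of Theorem~\ref{thm:no_small_fillings}, so I do not anticipate any serious further obstacles.
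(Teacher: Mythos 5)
Your proof is correct and takes essentially the same route as the paper: split $X$ along the separating copy of $L_k$, use Mayer--Vietoris and Novikov additivity to get additivity of $b_2^{\pm}$, conclude that one piece is definite, and invoke Theorem~\ref{thm:no_small_fillings}. The only cosmetic differences are that the paper reverses the orientation of $X$ at the outset so as to produce a negative-definite piece directly (whereas you find a positive-definite piece and reorient it), and that you make explicit the semidefinite-to-definite upgrade coming from the rational homology sphere boundary, which the paper leaves implicit.
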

\begin{proof}
By hypothesis and Theorem~\ref{thm:no_small_fillings}, any smooth negative-definite filling of $L_k$ or $-L_k$ has $b_2\geq k$.
Suppose that $L_k$ smoothly embeds in a closed $4$-manifold $X$ as a separating submanifold and that $|\sigma(X)|\geq b_2(X)-2$. By changing the orientation of $X$ if necessary we may assume that $\sigma(X)\leq 2-b_2(X)$ and hence that $b_2^+(X)\leq 1$. The embedding of $L_k$ in $X$ gives a decomposition of $X$ into two pieces $X_1$ and $X_2$ with $\partial X_1 \cong -\partial X_2 \cong L_k$. Moreover, by Novikov additivity and the fact that a lens space is a rational homology sphere, we have that $b_2^+(X)=b_2^+(X_1)+b_2^+(X_2)$ and $b_2^-(X)=b_2^-(X_1)+b_2^-(X_2)$. Thus, we have that at least one of $b_2^+(X_1)$ or $b_2^+(X_2)$ is zero. That is, at least one of $X_1$ or $X_2$ is negative-definite. This implies that $b_2(X_1)\geq k$ or $b_2(X_2)\geq k$ and hence that $b_2(X)\geq k$.
\end{proof}

Mimicking arguments that can be found in \cite{Aceto-Golla-Larson:2017-1}, we note that there is no spin 4-manifold that contains every lens space as a separating submanifold.
\begin{prop}\label{prop:spin}
If the lens space $L(n,n-1)$ with $n\geq 1$ odd smoothly embeds in a closed spin 4-manifold $X$ as a separating submanifold, then
\[
n\leq 9b_2(X)+1.
\]
\end{prop}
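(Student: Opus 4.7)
The plan is to imitate the Aceto--Golla--Larson argument: form a closed spin $4$-manifold by capping one side of the embedding with a spin filling of $L(n,n-1)$, then apply Furuta's $10/8$-theorem.

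First I would observe that, because $n$ is odd, $H^1(L(n,n-1);\mathbb{Z}/2)$ vanishes, so $L := L(n,n-1)$ carries a \emph{unique} spin structure. The canonical negative-definite plumbing $X(n,n-1)$ is the $A_{n-1}$-plumbing, i.e.\ a chain of $n-1$ disc bundles of Euler number $-2$; it is simply connected with even intersection form, and hence spin. Thus it fills $L$ with its unique spin structure, and any other spin filling of $L$ will agree with it on the boundary automatically.

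Writing the separating embedding as $X = X_1 \cup_L X_2$, each $X_i$ inherits a spin structure from $X$, whose restriction to $L$ must be the unique one. Consequently $Y := X_1 \cup_L -X(n,n-1)$ is a closed spin $4$-manifold. Since $L$ is a rational homology sphere, Novikov additivity and Mayer--Vietoris give
\[
b_2(Y) = b_2(X_1) + (n-1), \qquad \sigma(Y) = \sigma(X_1) + (n-1),
\]
where the shifts come from $-X(n,n-1)$ being positive-definite of rank $n-1$.

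The final step is Furuta's $10/8$-theorem applied to $Y$, followed by a case split on the sign of $\sigma(Y)$. When $\sigma(Y) > 0$, rewriting the inequality $b_2(Y) \geq \tfrac{10}{8}\,\sigma(Y) + 2$ in terms of $b_2^{\pm}(X_1)$ gives $9\,b_2^-(X_1) - b_2^+(X_1) \geq n+7$, and since $b_2^-(X_1) \leq b_2(X)$ this yields $n \leq 9\,b_2(X) - 7$. When $\sigma(Y) \leq 0$, the relation $\sigma(X_1) \leq -(n-1)$ forces $b_2^-(X_1) \geq n - 1$, giving $n \leq b_2(X) + 1$. In either case, $n \leq 9\,b_2(X)+1$. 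The only genuine subtlety is the spin-structure bookkeeping at the start; once $X(n,n-1)$ is identified as a spin filling inducing the unique spin structure on $L$, the final inequality is an elementary rearrangement of Furuta's bound.
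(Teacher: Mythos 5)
Your proposal is correct and follows essentially the same route as the paper: cap one side of the separating embedding with the spin $A_{n-1}$-plumbing $X(n,n-1)$ (using that $n$ odd forces a unique spin structure on $L(n,n-1)$), then apply Furuta's $10/8$-theorem together with Novikov additivity and Mayer--Vietoris. The only difference is cosmetic: the paper bounds $|\sigma(W)|\geq n-1-b_2(X)$ and uses the weaker inequality $b_2\geq \tfrac54|\sigma|$ in one step, whereas you split on the sign of the signature and keep the $+2$, arriving at the same conclusion $n\leq 9b_2(X)+1$.
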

\begin{proof}
Note that since $n$ is odd, the lens space $L(n,n-1)$ admits a unique spin structure. Cutting $X$ along the embedded $L(n,n-1)$ yields a spin 4-manifold $Z$ filling $-L(n,n-1)$. On the other hand, the manifold $X(n,n-1)$ is negative-definite and spin with $b_2(X(n,n-1))=n-1$. Since the spin structure on $L(n,n-1)$ is unique, the closed 4-manifold $W=X(n,n-1) \cup_{L(n,n-1)} Z$ is spin. We have
\[b_2(W)=b_2(Z)+n-1\leq b_2(X)+n-1\]
and
\[|\sigma(W)|=|\sigma(Z)-n+1|\geq n-1-b_2(X).\]
Thus by Furuta's theorem \cite{Furuta:2001-1}, we have
\[
b_2(X)+n-1\geq b_2(W) \geq \frac54 |\sigma(W)|\geq \frac54(n-1 -b_2(X)).
\] 
Rearranging implies that $n\leq 9b_2(X)+1$.
\end{proof}

\begin{proof}[Proof of Theorem~\ref{thm:embedding}]
Let $X$ be a smooth closed 4-manifold that contains every lens space as a smoothly embedded separating submanifold. Proposition~\ref{prop:spin} implies that $X$ cannot be spin. Proposition~\ref{prop:indef_embedding} implies that $|\sigma(X)|< b_2(X)-2$. However $b_2(X)\equiv \sigma(X) \bmod 2$, so this implies that $|\sigma(X)|\leq b_2(X)-4$.\end{proof}

\bibliographystyle{alpha}
\bibliography{bib}
\appendix

\section{Computer calculations}\label{sec:computer}
We compile the results of the calculations necessary to complete the proofs of Lemma~\ref{lem:0-chain_rigid} and Lemma~\ref{lem:min_example_properties}. Although these calculations could be carried out by hand, the details would be so tedious that computer calculation is certainly preferable. In both cases, we are required to classify all embeddings of certain linear plumbing graphs into $\Z^n$ for all $n$.  Let $\Gamma$ be a linear plumbing graph of the form
$$\begin{tikzpicture}[xscale=1.0,yscale=1,baseline={(0,0)}]
    \node at (1, .4) {$a_1$};
    \node at (2, .4) {$a_2$};
        \node at (3, .4) {$a_3$};
    \node at (5, .4) {$a_{m}$};
    \node (A1_1) at (1, 0) {$\bullet$};
    \node (A1_2) at (2, 0) {$\bullet$};
    \node (A1_3) at (3, 0) {$\bullet$};
    \node (A1_4) at (4, 0) {$\cdots$};
    \node (A1_5) at (5, 0) {$\bullet$};
    \path (A1_2) edge [-] node [auto] {$\scriptstyle{}$} (A1_3);
    \path (A1_3) edge [-] node [auto] {$\scriptstyle{}$} (A1_4);
        \path (A1_4) edge [-] node [auto] {$\scriptstyle{}$} (A1_5);
    \path (A1_1) edge [-] node [auto] {$\scriptstyle{}$} (A1_2);
  \end{tikzpicture}.$$
If we take $M$ to be the matrix

$$M=\begin{pmatrix}
a_1 & -1&  0     &0  \\
-1  &a_2& -1     &0  \\
0   &-1          &\ddots &  -1 \\
0    &  0        &  -1 & a_m \\
      \end{pmatrix},
      $$
then embeddings of $\Gamma$ in $\Z^n$ correspond to matrix factorizations of the form $A(A^T)=M$, where $A$ is an $m \times n$ integer matrix. Explicitly, the matrix $A$ corresponds to the embedding where the $i$th vertex $v_i$ is embedded as 
\[
v_i = a_{i1} e_1 + \dots + a_{in}e_n,
\]
and $a_{ij}\in \Z$ represents the $ij$ entry of $A$. Under this correspondence, automorphisms of $\Z^n$ correspond to permuting the columns of $A$ and multiplying columns by $-1$.

The \texttt{OrthogonalEmbeddings} command in GAP takes a symmetric integer matrix $M$ and produces, up to permutation of columns and multiplication of columns by $-1$, all integer matrices $A$ such that $A (A^T)=M$ and all columns of $A$ are nonzero \cite{GAP}. Thus given a plumbing graph $\Gamma$, we can use the \texttt{OrthogonalEmbeddings} to study all possible embeddings of $\Gamma$ into diagonal lattices.

The following lemma is necessary to justify Lemma~\ref{lem:0-chain_rigid}. 
\begin{lem}\label{lem:computer1}
For each of the following linear plumbing graphs, the subgraph comprising the vertices with red/circled weights is rigid:
\begin{enumerate}[label=(\roman*), font=\upshape]
\item $\begin{tikzpicture}[xscale=1.0,yscale=1,baseline={(0,0)}]
    \node at (1, .4) {$2$};
    \node at (2, .4) {$\red{\circled{\emph{3}}}$};
    \node at (3, .4) {$\red{\circled{\emph{2}}}$};
    \node at (4, .4) {$\red{\circled{\emph{2}}}$};
        \node at (5, .4) {$\red{\circled{\emph{3}}}$};
                \node at (6, .4) {$2$};
    \node (A1_1) at (1, 0) {$\bullet$};
    \node (A1_2) at (2, 0) {$\bullet$};
    \node (A1_3) at (3, 0) {$\bullet$};
    \node (A1_4) at (4, 0) {$\bullet$};
        \node (A1_5) at (5, 0) {$\bullet$};
            \node (A1_6) at (6, 0) {$\bullet$};
    \path (A1_2) edge [-] node [auto] {$\scriptstyle{}$} (A1_3);
    \path (A1_3) edge [-] node [auto] {$\scriptstyle{}$} (A1_4);
    \path (A1_1) edge [-] node [auto] {$\scriptstyle{}$} (A1_2);
        \path (A1_4) edge [-] node [auto] {$\scriptstyle{}$} (A1_5);
            \path (A1_5) edge [-] node [auto] {$\scriptstyle{}$} (A1_6);
  \end{tikzpicture}$

\item $\begin{tikzpicture}[xscale=1.0,yscale=1,baseline={(0,0)}]
    \node at (1, .4) {$2$};
    \node at (2, .4) {$\red{\circled{\emph{3}}}$};
    \node at (3, .4) {$\red{\circled{\emph{2}}}$};
    \node at (4, .4) {$\red{\circled{\emph{2}}}$};
        \node at (5, .4) {$\red{\circled{\emph{3}}}$};
\node at (6, .4) {$3$};
\node at (7, .4) {$2$};
                                
    \node (A1_1) at (1, 0) {$\bullet$};
    \node (A1_2) at (2, 0) {$\bullet$};
    \node (A1_3) at (3, 0) {$\bullet$};
    \node (A1_4) at (4, 0) {$\bullet$};
        \node (A1_5) at (5, 0) {$\bullet$};
            \node (A1_6) at (6, 0) {$\bullet$};
\node (A1_7) at (7, 0) {$\bullet$};

    \path (A1_2) edge [-] node [auto] {$\scriptstyle{}$} (A1_3);
    \path (A1_3) edge [-] node [auto] {$\scriptstyle{}$} (A1_4);
    \path (A1_1) edge [-] node [auto] {$\scriptstyle{}$} (A1_2);
        \path (A1_4) edge [-] node [auto] {$\scriptstyle{}$} (A1_5);
\path (A1_5) edge [-] node [auto] {$\scriptstyle{}$} (A1_6);
\path (A1_6) edge [-] node [auto] {$\scriptstyle{}$} (A1_7);
  \end{tikzpicture}$
  
\item $\begin{tikzpicture}[xscale=1.0,yscale=1,baseline={(0,0)}]
    \node at (1, .4) {$2$};
    \node at (2, .4) {$2$};
    \node at (3, .4) {$\red{\circled{\emph{3}}}$};
    \node at (4, .4) {$\red{\circled{\emph{2}}}$};
        \node at (5, .4) {$\red{\circled{\emph{2}}}$};
\node at (6, .4) {$\red{\circled{\emph{3}}}$};
\node at (7, .4) {$3$};
                                
    \node (A1_1) at (1, 0) {$\bullet$};
    \node (A1_2) at (2, 0) {$\bullet$};
    \node (A1_3) at (3, 0) {$\bullet$};
    \node (A1_4) at (4, 0) {$\bullet$};
        \node (A1_5) at (5, 0) {$\bullet$};
            \node (A1_6) at (6, 0) {$\bullet$};
\node (A1_7) at (7, 0) {$\bullet$};

    \path (A1_2) edge [-] node [auto] {$\scriptstyle{}$} (A1_3);
    \path (A1_3) edge [-] node [auto] {$\scriptstyle{}$} (A1_4);
    \path (A1_1) edge [-] node [auto] {$\scriptstyle{}$} (A1_2);
        \path (A1_4) edge [-] node [auto] {$\scriptstyle{}$} (A1_5);
\path (A1_5) edge [-] node [auto] {$\scriptstyle{}$} (A1_6);
\path (A1_6) edge [-] node [auto] {$\scriptstyle{}$} (A1_7);
  \end{tikzpicture}$
\item $\begin{tikzpicture}[xscale=1.0,yscale=1,baseline={(0,0)}]
    \node at (1, .4) {$2$};
    \node at (2, .4) {$2$};
    \node at (3, .4) {$\red{\circled{\emph{3}}}$};
    \node at (4, .4) {$\red{\circled{\emph{2}}}$};
        \node at (5, .4) {$\red{\circled{\emph{2}}}$};
\node at (6, .4) {$\red{\circled{\emph{3}}}$};
\node at (7, .4) {$4$};
\node at (8, .4) {$2$};
                                
    \node (A1_1) at (1, 0) {$\bullet$};
    \node (A1_2) at (2, 0) {$\bullet$};
    \node (A1_3) at (3, 0) {$\bullet$};
    \node (A1_4) at (4, 0) {$\bullet$};
        \node (A1_5) at (5, 0) {$\bullet$};
            \node (A1_6) at (6, 0) {$\bullet$};
\node (A1_7) at (7, 0) {$\bullet$};
\node (A1_8) at (8, 0) {$\bullet$};

    \path (A1_2) edge [-] node [auto] {$\scriptstyle{}$} (A1_3);
    \path (A1_3) edge [-] node [auto] {$\scriptstyle{}$} (A1_4);
    \path (A1_1) edge [-] node [auto] {$\scriptstyle{}$} (A1_2);
        \path (A1_4) edge [-] node [auto] {$\scriptstyle{}$} (A1_5);
\path (A1_5) edge [-] node [auto] {$\scriptstyle{}$} (A1_6);
\path (A1_6) edge [-] node [auto] {$\scriptstyle{}$} (A1_7);
\path (A1_7) edge [-] node [auto] {$\scriptstyle{}$} (A1_8);
  \end{tikzpicture}$

\item $\begin{tikzpicture}[xscale=1.0,yscale=1,baseline={(0,0)}]
    \node at (1, .4) {$2$};
    \node at (2, .4) {$3$};
    \node at (3, .4) {$\red{\circled{\emph{3}}}$};
    \node at (4, .4) {$\red{\circled{\emph{2}}}$};
        \node at (5, .4) {$\red{\circled{\emph{2}}}$};
\node at (6, .4) {$\red{\circled{\emph{3}}}$};
\node at (7, .4) {$3$};
\node at (8, .4) {$2$};
                                
    \node (A1_1) at (1, 0) {$\bullet$};
    \node (A1_2) at (2, 0) {$\bullet$};
    \node (A1_3) at (3, 0) {$\bullet$};
    \node (A1_4) at (4, 0) {$\bullet$};
        \node (A1_5) at (5, 0) {$\bullet$};
            \node (A1_6) at (6, 0) {$\bullet$};
\node (A1_7) at (7, 0) {$\bullet$};
\node (A1_8) at (8, 0) {$\bullet$};

    \path (A1_2) edge [-] node [auto] {$\scriptstyle{}$} (A1_3);
    \path (A1_3) edge [-] node [auto] {$\scriptstyle{}$} (A1_4);
    \path (A1_1) edge [-] node [auto] {$\scriptstyle{}$} (A1_2);
        \path (A1_4) edge [-] node [auto] {$\scriptstyle{}$} (A1_5);
\path (A1_5) edge [-] node [auto] {$\scriptstyle{}$} (A1_6);
\path (A1_6) edge [-] node [auto] {$\scriptstyle{}$} (A1_7);
\path (A1_7) edge [-] node [auto] {$\scriptstyle{}$} (A1_8);
  \end{tikzpicture}$
\end{enumerate}
\end{lem}
\begin{proof}
For each of these graphs the procedure is the same. Using GAP, we list all the matrices corresponding to embeddings of the graph as described above. One can then verify that for each of these matrices the rows representing to the red/circled vertices corresponds to a standard embedding. Explicitly, this means that by permuting columns and multiplying columns by $-1$ the four rows representing the red/circled vertices can be put in the form
\[\begin{pmatrix}
1&1&1&0&0&0&0&0&\dots&0\\
0&0&-1&1&0&0&0&0&\dots&0\\
0&0&0&-1&1&0&0&0&\dots&0\\
0&0&0&0&-1&1&1&0&\dots&0
\end{pmatrix}
\]
We now list the relevant matrices for each of the graphs in turn. The GAP code generating these matrices is given in Appendix~\ref{sec:code}. In all cases the rows corresponding to the red/circled vertices are those between the horizontal lines in the matrices and one can easily see that they have the necessary properties.

\setcounter{MaxMatrixCols}{13}

\begin{enumerate}
\item For the plumbing graph $\begin{tikzpicture}[xscale=1.0,yscale=1,baseline={(0,0)}]
    \node at (1, .4) {$2$};
    \node at (2, .4) {$\red{\circled{3}}$};
    \node at (3, .4) {$\red{\circled{2}}$};
    \node at (4, .4) {$\red{\circled{2}}$};
        \node at (5, .4) {$\red{\circled{3}}$};
                \node at (6, .4) {$2$};
    \node (A1_1) at (1, 0) {$\bullet$};
    \node (A1_2) at (2, 0) {$\bullet$};
    \node (A1_3) at (3, 0) {$\bullet$};
    \node (A1_4) at (4, 0) {$\bullet$};
        \node (A1_5) at (5, 0) {$\bullet$};
            \node (A1_6) at (6, 0) {$\bullet$};
    \path (A1_2) edge [-] node [auto] {$\scriptstyle{}$} (A1_3);
    \path (A1_3) edge [-] node [auto] {$\scriptstyle{}$} (A1_4);
    \path (A1_1) edge [-] node [auto] {$\scriptstyle{}$} (A1_2);
        \path (A1_4) edge [-] node [auto] {$\scriptstyle{}$} (A1_5);
            \path (A1_5) edge [-] node [auto] {$\scriptstyle{}$} (A1_6);
  \end{tikzpicture}$
there is one possible matrix:

\vspace{.3cm}
\[\begin{pmatrix}
0&0&0&-1&1&0&0&0&0\\
\hline
-1&0&0&1&0&0&0&1&0\\
1&-1&0&0&0&0&0&0&0\\
0&1&-1&0&0&0&0&0&0\\
0&0&1&0&0&-1&0&0&1\\
\hline
0&0&0&0&0&1&1&0&0
\end{pmatrix}
\]

\item For the plumbing graph
$\begin{tikzpicture}[xscale=1.0,yscale=1,baseline={(0,0)}]
    \node at (1, .4) {$2$};
    \node at (2, .4) {$\red{\circled{3}}$};
    \node at (3, .4) {$\red{\circled{2}}$};
    \node at (4, .4) {$\red{\circled{2}}$};
        \node at (5, .4) {$\red{\circled{3}}$};
\node at (6, .4) {$3$};
\node at (7, .4) {$2$};
                                
    \node (A1_1) at (1, 0) {$\bullet$};
    \node (A1_2) at (2, 0) {$\bullet$};
    \node (A1_3) at (3, 0) {$\bullet$};
    \node (A1_4) at (4, 0) {$\bullet$};
        \node (A1_5) at (5, 0) {$\bullet$};
            \node (A1_6) at (6, 0) {$\bullet$};
\node (A1_7) at (7, 0) {$\bullet$};

    \path (A1_2) edge [-] node [auto] {$\scriptstyle{}$} (A1_3);
    \path (A1_3) edge [-] node [auto] {$\scriptstyle{}$} (A1_4);
    \path (A1_1) edge [-] node [auto] {$\scriptstyle{}$} (A1_2);
        \path (A1_4) edge [-] node [auto] {$\scriptstyle{}$} (A1_5);
\path (A1_5) edge [-] node [auto] {$\scriptstyle{}$} (A1_6);
\path (A1_6) edge [-] node [auto] {$\scriptstyle{}$} (A1_7);
  \end{tikzpicture}$ there are two matrices:
  \vspace{.3cm}
\[\begin{pmatrix}
0&0&0&0&0&-1&1&0&0&0\\
\hline
0&0&-1&0&0&1&0&1&0&0\\
0&0&1&-1&0&0&0&0&0&0\\
0&0&0&1&-1&0&0&0&0&0\\
1&-1&0&0&1&0&0&0&0&0\\
\hline
-1&0&0&0&0&0&0&0&1&1\\
1&1&0&0&0&0&0&0&0&0
\end{pmatrix}\]

\vspace{.3cm}
\[
\begin{pmatrix}
0&0&0&-1&1&0&0&0&0&0&0 \\
\hline
-1&0 &0&1 &0&0&0&0&1&0&0\\
1 &-1&0&0&0&0&0&0&0&0&0\\
0 &1&-1&0&0&0&0&0&0&0&0\\
0 &0&1&0&0&0&0&-1&0&1&0\\
\hline
0 &0&0&0&0&-1&0&1&0&0&1\\
 0&0&0&0&0&1&1&0&0&0&0
\end{pmatrix}\]

\item For the plumbing graph $\begin{tikzpicture}[xscale=1.0,yscale=1,baseline={(0,0)}]
    \node at (1, .4) {$2$};
    \node at (2, .4) {$2$};
    \node at (3, .4) {$\red{\circled{3}}$};
    \node at (4, .4) {$\red{\circled{2}}$};
        \node at (5, .4) {$\red{\circled{2}}$};
\node at (6, .4) {$\red{\circled{3}}$};
\node at (7, .4) {$3$};
                                
    \node (A1_1) at (1, 0) {$\bullet$};
    \node (A1_2) at (2, 0) {$\bullet$};
    \node (A1_3) at (3, 0) {$\bullet$};
    \node (A1_4) at (4, 0) {$\bullet$};
        \node (A1_5) at (5, 0) {$\bullet$};
            \node (A1_6) at (6, 0) {$\bullet$};
\node (A1_7) at (7, 0) {$\bullet$};

    \path (A1_2) edge [-] node [auto] {$\scriptstyle{}$} (A1_3);
    \path (A1_3) edge [-] node [auto] {$\scriptstyle{}$} (A1_4);
    \path (A1_1) edge [-] node [auto] {$\scriptstyle{}$} (A1_2);
        \path (A1_4) edge [-] node [auto] {$\scriptstyle{}$} (A1_5);
\path (A1_5) edge [-] node [auto] {$\scriptstyle{}$} (A1_6);
\path (A1_6) edge [-] node [auto] {$\scriptstyle{}$} (A1_7);
  \end{tikzpicture}$
there are two matrices:
\vspace{.3cm}
\[\begin{pmatrix}
1&-1&0&0&0&0&0&0&0&0\\
-1&0&1&0&0&0&0&0&0&0\\
\hline
1&1&0&-1&0&0&0&0&0&0\\
0&0&0&1&-1&0&0&0&0&0\\
0&0&0&0&1&-1&0&0&0&0\\
0&0&0&0&0&1&-1&1&0&0\\
\hline
0&0&0&0&0&0&1&0&1&1
\end{pmatrix}\]

\vspace{.3cm}
\[
\begin{pmatrix}
0&0&0&-1&1&0&0&0&0&0&0\\
0&0&0&1&0&-1&0&0&0&0&0\\
\hline
-1&0&0&0&0&1&1&0&0&0&0\\
1&-1&0&0&0&0&0&0&0&0&0\\
0&1&-1&0&0&0&0&0&0&0&0\\
0&0&1&0&0&0&0&-1&1&0&0\\
\hline
0&0&0&0&0&0&0&1&0&1&1
\end{pmatrix}
\]

\item For the plumbing graph $$\begin{tikzpicture}[xscale=1.0,yscale=1,baseline={(0,0)}]
    \node at (1, .4) {$2$};
    \node at (2, .4) {$2$};
    \node at (3, .4) {$\red{\circled{3}}$};
    \node at (4, .4) {$\red{\circled{2}}$};
        \node at (5, .4) {$\red{\circled{2}}$};
\node at (6, .4) {$\red{\circled{3}}$};
\node at (7, .4) {$4$};
\node at (8, .4) {$2$};
                                
    \node (A1_1) at (1, 0) {$\bullet$};
    \node (A1_2) at (2, 0) {$\bullet$};
    \node (A1_3) at (3, 0) {$\bullet$};
    \node (A1_4) at (4, 0) {$\bullet$};
        \node (A1_5) at (5, 0) {$\bullet$};
            \node (A1_6) at (6, 0) {$\bullet$};
\node (A1_7) at (7, 0) {$\bullet$};
\node (A1_8) at (8, 0) {$\bullet$};

    \path (A1_2) edge [-] node [auto] {$\scriptstyle{}$} (A1_3);
    \path (A1_3) edge [-] node [auto] {$\scriptstyle{}$} (A1_4);
    \path (A1_1) edge [-] node [auto] {$\scriptstyle{}$} (A1_2);
        \path (A1_4) edge [-] node [auto] {$\scriptstyle{}$} (A1_5);
\path (A1_5) edge [-] node [auto] {$\scriptstyle{}$} (A1_6);
\path (A1_6) edge [-] node [auto] {$\scriptstyle{}$} (A1_7);
\path (A1_7) edge [-] node [auto] {$\scriptstyle{}$} (A1_8);
  \end{tikzpicture}$$ there are four matrices:
\vspace{.3cm}
\[\begin{pmatrix}
1&-1&0&0&0&0&0&0&0&0&0\\
-1&0&1&0&0&0&0&0&0&0&0\\
\hline
1&1&0&0&0&-1&0&0&0&0&0\\
0&0&0&0&0&1&-1&0&0&0&0\\
0&0&0&0&0&0&1&1&0&0&0\\
0&0&0&1&-1&0&0&-1&0&0&0\\
\hline
0&0&0&-1&0&0&0&0&1&1&1\\
0&0&0&1&1&0&0&0&0&0&0
\end{pmatrix}\]

\vspace{.3cm}
\[\begin{pmatrix}
1&-1&0&0&0&0&0&0&0&0&0&0\\
-1&0&1&0&0&0&0&0&0&0&0&0\\
\hline
1&1&0&-1&0&0&0&0&0&0&0&0\\
0&0&0&1&-1&0&0&0&0&0&0&0\\
0&0&0&0&1&1&0&0&0&0&0&0\\
0&0&0&0&0&-1&0&0&1&1&0&0\\
\hline
0&0&0&0&0&0&-1&0&-1&0&1&1\\
0&0&0&0&0&0&1&1&0&0&0&0
\end{pmatrix}\]

\vspace{.3cm}
\[\begin{pmatrix}
0&0&0&0&0&-1&1&0&0&0&0&0\\
0&0&0&0&0&1&0&-1&0&0&0&0\\
\hline
0&0&-1&0&0&0&0&1&1&0&0&0\\
0&0&1&-1&0&0&0&0&0&0&0&0\\
0&0&0&1&1&0&0&0&0&0&0&0\\
1&-1&0&0&-1&0&0&0&0&0&0&0\\
\hline
-1&0&0&0&0&0&0&0&0&1&1&1\\
1&1&0&0&0&0&0&0&0&0&0&0
\end{pmatrix}\]

\vspace{.3cm}
\[\begin{pmatrix}
0&0&0&-1&1&0&0&0&0&0&0&0&0\\
0&0&0&1&0&-1&0&0&0&0&0&0&0\\
\hline
-1&0&0&0&0&1&1&0&0&0&0&0&0\\
1&-1&0&0&0&0&0&0&0&0&0&0&0\\
0&1&1&0&0&0&0&0&0&0&0&0&0\\
0&0&-1&0&0&0&0&0&0&1&1&0&0\\
\hline
0&0&0&0&0&0&0&-1&0&-1&0&1&1\\
0&0&0&0&0&0&0&1&1&0&0&0&0
\end{pmatrix}\]

\item For the plumbing graph $$\begin{tikzpicture}[xscale=1.0,yscale=1,baseline={(0,0)}]
    \node at (1, .4) {$2$};
    \node at (2, .4) {$3$};
    \node at (3, .4) {$\red{\circled{3}}$};
    \node at (4, .4) {$\red{\circled{2}}$};
        \node at (5, .4) {$\red{\circled{2}}$};
\node at (6, .4) {$\red{\circled{3}}$};
\node at (7, .4) {$3$};
\node at (8, .4) {$2$};
                                
    \node (A1_1) at (1, 0) {$\bullet$};
    \node (A1_2) at (2, 0) {$\bullet$};
    \node (A1_3) at (3, 0) {$\bullet$};
    \node (A1_4) at (4, 0) {$\bullet$};
        \node (A1_5) at (5, 0) {$\bullet$};
            \node (A1_6) at (6, 0) {$\bullet$};
\node (A1_7) at (7, 0) {$\bullet$};
\node (A1_8) at (8, 0) {$\bullet$};

    \path (A1_2) edge [-] node [auto] {$\scriptstyle{}$} (A1_3);
    \path (A1_3) edge [-] node [auto] {$\scriptstyle{}$} (A1_4);
    \path (A1_1) edge [-] node [auto] {$\scriptstyle{}$} (A1_2);
        \path (A1_4) edge [-] node [auto] {$\scriptstyle{}$} (A1_5);
\path (A1_5) edge [-] node [auto] {$\scriptstyle{}$} (A1_6);
\path (A1_6) edge [-] node [auto] {$\scriptstyle{}$} (A1_7);
\path (A1_7) edge [-] node [auto] {$\scriptstyle{}$} (A1_8);
  \end{tikzpicture}$$
there are five matrices:
\vspace{.3cm}
\[\begin{pmatrix}
0&1&-1&0&0&0&0&0&0\\
1&-1&0&-1&0&0&0&0&0\\
\hline
0&1&1&0&0&0&-1&0&0\\
0&0&0&0&0&0&1&-1&0\\
0&0&0&0&0&0&0&1&-1\\
0&0&0&0&1&-1&0&0&1\\
\hline
1&0&0&1&-1&0&0&0&0\\
0&0&0&0&1&1&0&0&0
\end{pmatrix}\]

\vspace{.3cm}
\[\begin{pmatrix}
1&-1&0&0&0&0&0&0&0&0&0\\
-1&0&0&0&0&0&0&1&1&0&0\\
\hline
1&1&0&0&-1&0&0&0&0&0&0\\
0&0&0&0&1&-1&0&0&0&0&0\\
0&0&0&0&0&1&-1&0&0&0&0\\
0&0&1&-1&0&0&1&0&0&0&0\\
\hline
0&0&-1&0&0&0&0&0&0&1&1\\
0&0&1&1&0&0&0&0&0&0&0
\end{pmatrix}\]

\vspace{.3cm}
\[\begin{pmatrix}
1&-1&0&0&0&0&0&0&0&0&0&0\\
-1&0&0&0&0&0&0&0&0&1&1&0\\
\hline
1&1&-1&0&0&0&0&0&0&0&0&0\\
0&0&1&-1&0&0&0&0&0&0&0&0\\
0&0&0&1&-1&0&0&0&0&0&0&0\\
0&0&0&0&1&0&0&-1&1&0&0&0\\
\hline
0&0&0&0&0&-1&0&1&0&0&0&1\\
0&0&0&0&0&1&1&0&0&0&0&0
\end{pmatrix}\]

\vspace{.3cm}
$$\begin{pmatrix}
0&0&0&0&0&-1&1&0&0&0&0&0\\
0&0&0&0&0&1&0&-1&0&1&0&0\\
\hline
0&0&-1&0&0&0&0&1&1&0&0&0\\
0&0&1&-1&0&0&0&0&0&0&0&0\\
0&0&0&1&-1&0&0&0&0&0&0&0\\
1&-1&0&0&1&0&0&0&0&0&0&0\\
\hline
-1&0&0&0&0&0&0&0&0&0&1&1\\
1&1&0&0&0&0&0&0&0&0&0&0
\end{pmatrix}$$

\vspace{.3cm}

\[\begin{pmatrix}
0&0&0&-1&1&0&0&0&0&0&0&0&0\\
0&0&0&1&0&0&0&-1&0&0&0&1&0\\
\hline
-1&0&0&0&0&0&0&1&0&1&0&0&0\\
1&-1&0&0&0&0&0&0&0&0&0&0&0\\
0&1&-1&0&0&0&0&0&0&0&0&0&0\\
0&0&1&0&0&0&0&0&-1&0&1&0&0\\
\hline
0&0&0&0&0&-1&0&0&1&0&0&0&1\\
0&0&0&0&0&1&1&0&0&0&0&0&0
\end{pmatrix}\]

\end{enumerate}
\end{proof}

The following lemma is necessary to justify Lemma~\ref{lem:min_example_properties}.
\begin{lem}\label{lem:computer2}
The following linear plumbing graph is rigid:
$$\begin{tikzpicture}[xscale=1.0,yscale=1,baseline={(0,0)}]
    \node at (-1.0, .4) {$2$};
    \node at (0.0, .4) {$4$};
    \node at (1.0, .4) {$2$};
    \node at (2.0, .4) {$2$};
    \node at (3.0, .4) {$2$};
        \node at (4.0, .4) {$3$};
                \node at (5.0, .4) {$2$};
        \node (A1_9) at (-1, 0) {$\bullet$};
    \node (A1_0) at (0, 0) {$\bullet$};
    \node (A1_1) at (1, 0) {$\bullet$};
    \node (A1_2) at (2, 0) {$\bullet$};
    \node (A1_3) at (3, 0) {$\bullet$};
        \node (A1_4) at (4, 0) {$\bullet$};
                \node (A1_5) at (5, 0) {$\bullet$};
        \path (A1_9) edge [-] node [auto] {$\scriptstyle{}$} (A1_0);
    \path (A1_0) edge [-] node [auto] {$\scriptstyle{}$} (A1_1);
    \path (A1_1) edge [-] node [auto] {$\scriptstyle{}$} (A1_2);
        \path (A1_2) edge [-] node [auto] {$\scriptstyle{}$} (A1_3);
                \path (A1_3) edge [-] node [auto] {$\scriptstyle{}$} (A1_4);
                                \path (A1_4) edge [-] node [auto] {$\scriptstyle{}$} (A1_5);
  \end{tikzpicture}.$$
\end{lem}
\begin{proof}
Using GAP we see that this graph admits a unique embedding. The GAP code verifying this is given in Appendix~\ref{sec:code}.
\end{proof}

\section{Code}\label{sec:code}
The following GAP code will carry out the calculations in Lemma~\ref{lem:computer1} and Lemma~\ref{lem:computer2}.
\begin{alltt}
ListEmbeddings:= function(coefs)
local embeddings, k, M;

M:=DiagonalMat(coefs);
for k in [1..Length(coefs)-1] do
M[k][k+1]:=-1;
M[k+1][k]:=-1;
od;

embed:=OrthogonalEmbeddings(M);

Print("----", coefs ,"----","\textbackslash{n}");
Print("Number of embeddings:",Length(embed.solutions),"\textbackslash{n}");
for k in [1..Length(embed.solutions)] do
Print(TransposedMat(embed.vectors\{embed.solutions[k]\}),"\textbackslash{n}");
od;
Print("--------","\textbackslash{n}");
end;

Print("Embeddings for Lemma A.1:\textbackslash{n}");
ListEmbeddings([2,3,2,2,3,2]);
ListEmbeddings([2,3,2,2,3,3,2]);
ListEmbeddings([2,2,3,2,2,3,3]);
ListEmbeddings([2,2,3,2,2,3,4,2]);
ListEmbeddings([2,3,3,2,2,3,3,2]);

Print("Embeddings for Lemma A.2:\textbackslash{n}");
ListEmbeddings([2,4,2,2,2,3,2]);

\end{alltt}

\end{document}